\providecommand*{\cupdot}{%
  \mathbin{%
    \mathpalette\@cupdot{}%
  }%
}
\newcommand*{\@cupdot}[2]{%
  \ooalign{%
    $\m@th#1\cup$\cr
    \sbox0{$#1\cup$}%
    \dimen@=\ht0 %
    \sbox0{$\m@th#1\cdot$}%
    \advance\dimen@ by -\ht0 %
    \dimen@=.5\dimen@
    \hidewidth\raise\dimen@\box0\hidewidth
  }%
}
\providecommand*{\bigcupdot}{%
  \mathop{%
    \vphantom{\bigcup}%
    \mathpalette\@bigcupdot{}%
  }%
}
\newcommand*{\@bigcupdot}[2]{%
  \ooalign{%
    $\m@th#1\bigcup$\cr
    \sbox0{$#1\bigcup$}%
    \dimen@=\ht0 %
    \advance\dimen@ by -\dp0 %
    \sbox0{\scalebox{2}{$\m@th#1\cdot$}}%
    \advance\dimen@ by -\ht0 %
    \dimen@=.5\dimen@
    \hidewidth\raise\dimen@\box0\hidewidth
  }%
}
\numberwithin{equation}{section}
\patchcmd{\ttlh@hang}{\parindent\z@}{\parindent\z@\leavevmode}{}{}
\patchcmd{\ttlh@hang}{\noindent}{}{}{}
\theoremstyle{plain}
\newtheorem{theorem}{Theorem}[section]
\newtheorem{lemma}[theorem]{Lemma}
\newtheorem{proposition}[theorem]{Proposition}
\theoremstyle{definition}
\newtheorem{definition}[theorem]{Definition}
\newtheorem{examplex}[theorem]{Example}
\newtheorem*{assumption*}{Kernel conditions}
\theoremstyle{remark}
\newtheorem{remark}[theorem]{Remark}
\newtheorem*{remark*}{Remark}
\let\emptyset\varnothing
\DeclareMathOperator*{\dom}{dom}
\DeclareMathOperator*{\loc}{loc}
\DeclareMathOperator*{\Co}{Co}
\DeclareMathOperator*{\rel}{Rel}
\DeclareMathOperator*{\supp}{supp}
\DeclareMathOperator*{\esssup}{ess\,sup}
\newcommand{\R}{\mathbb{R}}
\newcommand{\N}{\mathbb{N}}
\newcommand{\T}{\mathbb{T}}
\newcommand{\CC}{\mathbb{C}}
\newcommand{\gramian}{\mathscr{G}}
\newcommand{\synthesis}{\mathscr{D}}
\newcommand{\frameop}{\mathscr{S}}
\newcommand{\analysis}{\mathscr{C}}
\newcommand{\identity}{\mathrm{id}}
\newcommand{\goodMatricesUnweighted}{\mathcal{C}}
\newcommand{\goodMatrices}{\goodMatricesUnweighted_w}
\newcommand{\dominated}{\prec}
\newcommand{\strongWiener}{\WstCw}
\newcommand{\controlFunctionSpace}{\strongWiener}
\newcommand{\controlFunctionNormIndex}{\strongWiener}
\newcommand{\lowdiag}{\alpha}
\newcommand{\updiag}{\beta}
\newcommand{\molenv}{\Phi}
\newcommand{\Kone}{BD}
\newcommand{\Ktwo}{LOC}
\newcommand{\Kthree}{WUC}
\newcommand{\subspace}{\subset}
\newcommand{\Reservoir}{\mathcal{R}}
\newcommand{\TestVectors}{\mathcal{H}_w^1}
\newcommand{\uniformity}{\mathscr{U}}
\newcommand{\CoY}{\Co(Y)}
\newcommand{\Hil}{\mathcal{H}}
\newcommand{\Hpi}{\mathcal{H}_{\pi}}
\newcommand{\RKHS}{\mathcal{K}}
\newcommand{\maxR}{M_Q^R}
\newcommand{\maxL}{M_Q}
\newcommand{\WL}{\mathcal{W}^L}
\newcommand{\WR}{\mathcal{W}^R}
\newcommand{\Wstw}{\mathcal{W}_w}
\newcommand{\WstCw}{\Wstw}
\newcommand{\WLw}{\mathcal{W}^L_w}
\newcommand{\WRw}{\mathcal{W}^R_w}
\newcommand{\envker}{\Theta}
\newcommand{\indicator}{\mathds{1}}
\newcommand{\eps}{\varepsilon}
\newcommand{\normalized}[1]{\widetilde{#1}}
\newcommand{\norm}[1]{\lVert#1\rVert}
\newcommand{\abs}[1]{|#1|}
\title[On dual molecules and convolution-dominated operators]
      {On dual molecules \\ and convolution-dominated operators}
\author{Jos\'{e} Luis Romero}
\author{Jordy Timo van Velthoven}
\author{Felix Voigtlaender}
\address[JLR, JTvV, FV]{Faculty of Mathematics,
University of Vienna,
Oskar-Morgenstern-Platz 1,
A-1090 Vienna, Austria.}
\address[JLR]{Acoustics Research Institute, Austrian Academy of Sciences,
Wohllebengasse 12-14 A-1040, Vienna, Austria}
\address[JTvV]{Department of Mathematics, Ghent University, Krijgslaan 281, Building S8, B-9000 Ghent, Belgium}
\address[FV]{Katholische Universität Eichstätt-Ingolstadt,
Lehrstuhl Wissenschaftliches Rechnen, Ostenstraße 26, 85072 Eichstätt}
\email{jose.luis.romero@univie.ac.at, jlromero@kfs.oeaw.ac.at}
\email{jordy-timo.van-velthoven@univie.ac.at, jordy.vanvelthoven@ugent.be}
\email{felix@voigtlaender.xyz}
\thanks{
J.~L.~R. and J.~v.~V. ~gratefully acknowledge support
from the Austrian Science Fund (FWF): P 29462 and Y 1199,
and from the WWTF grant INSIGHT (MA16-053).
J.~v.~V.~ is grateful for the hospitality and support
of the Katholische Universit\"at Eichst\"att-Ingolstadt during his visit.
The work of F.~V.~was partially supported by the
proFOR+ program of the Katholische Universität Eichstätt--Ingolstadt.
}
\subjclass[2010]{22A10, 42C15, 42C40, 43A15, 46E22}
\begin{document}

\begin{abstract}
 We show that sampling or interpolation formulas in reproducing kernel Hilbert
spaces can be obtained by reproducing kernels whose dual systems form molecules, ensuring
that the size profile of a function is fully reflected by the size profile of its sampled values.
The main tool is a local holomorphic calculus for convolution-dominated operators, valid for
groups with possibly non-polynomial growth. Applied to the matrix coefficients of a group
representation, our methods improve on classical results on atomic decompositions and bridge
a gap between abstract and concrete methods.
\end{abstract}

\maketitle

\section{Introduction}
\label{sec:Introduction}

One of the earliest examples for the discretization of integral expansions concerns
\emph{Calder\'on's reproducing formula}:
for an adequate $\psi \in L^2 (\mathbb{R})$, any $f \in L^2(\R)$ can be written as
\begin{equation}\label{eq:calderon_id}
  f = \int_{\mathbb{R}}
        \int_{ \mathbb{R}^+}
          \langle f, \psi_{b,a} \rangle \, \psi_{b,a}
        \; \frac{da \, db}{a^2} ,
\end{equation}
where
\begin{align}\label{eq_wavelet}
  \psi_{b, a}(x) := a^{-1/2} \cdot \psi \big(a^{-1} (x - b)  \big),
  \qquad x \in \mathbb{R}, a>0, b \in \mathbb{R}.
\end{align}
Calder\'on's formula \eqref{eq:calderon_id} expresses an arbitrary function $f \in L^2(\R)$
as a superposition of scaled and shifted versions \eqref{eq_wavelet} of a basic profile $\psi$,
which is nowadays called a \emph{wavelet}.
Approximation rates for the truncation of Calder\'on's formula
amount to fine smoothness and decay conditions, and lead to
the theory of Besov-Triebel-Lizorkin spaces \cite{frazier1991littlewood}.

Considerable research has been devoted to finding discrete analogues
of \eqref{eq:calderon_id} that involve only a countable, discrete subset of the coefficients
$(\langle f, \psi_{b,a} \rangle)_{b \in \R, a > 0}$ and yet retain the same information.
Orthonormal wavelets provide an example of such discretizations:
If $\psi \in L^2 (\mathbb{R})$ is such a wavelet, then
\begin{align}\label{eq:discrete_representation}
  f = \sum_{\gamma \in \Gamma}
        \langle f, \psi_{\gamma} \rangle \, \psi_{\gamma}
  \qquad \text{for all} \quad f \in L^2(\R),
\end{align}
where
\(
  \Gamma
  = \{
      (2^{j} k, 2^{j})
      \colon
      j,k \in \mathbb{Z}
    \}
\)
is the dyadic index set and $\{\psi_\gamma: \gamma \in \Gamma\}$ is an orthonormal basis.
In the language of function spaces, the scaled and shifted profiles $\psi_{b,a}$
are called \emph{(time-scale) atoms}, and \eqref{eq:discrete_representation} is an
\emph{atomic decomposition} \cite{frazier1990discrete,groechenig1991describing}.
Many situations of interest, including higher dimensional analogues,
non-dyadic index sets, or particular constraints on the function $\psi$, lead to
\emph{non-orthogonal} expansions of the form
\begin{align}\label{eq:discrete_representation_2}
  f = \sum_{\lambda \in \Lambda}
        \langle f, \psi_{\lambda} \rangle \, \widetilde{\psi_{\lambda}}
  \qquad \text{for all } \quad f \in L^2(\R),
\end{align}
where $\Lambda \subset \R \times (0,\infty)$,
and the functions $\widetilde{\psi_{\lambda}}$ are not exactly atoms
given by \eqref{eq_wavelet} but \emph{molecules},
that is, systems of functions that qualitatively behave like atoms
in that their derivatives satisfy growth estimates with respect to the parameters $(b,a)$
as if they were given by \eqref{eq_wavelet};
see \cite{frazier1990discrete, frazier1988phi, frazier1985decomposition, memoir}.
The notion of molecule stems from the theory of Hardy spaces
\cite{coifman1980intro, taibleson1980molecular, coifman1980representation,frazier1990discrete}
and can also be considered in higher dimension and with respect to anisotropic dilations
\cite{MR2186983, MR2179611, MR3452925, bownik2003anisotropic}.
For most applications, molecules are as good as atoms
because they share similar representation and approximation properties.

\smallskip

The discretization of Calder\'on's formula \eqref{eq:calderon_id}
has a natural interpretation as a sampling problem:
Let
\begin{align}\label{eq_aff_group}
  G = \mathbb{R} \rtimes \mathbb{R}^+
    = \big\{ (b, a) \; : \; b \in \mathbb{R}, a \in (0,\infty) \big\}
\end{align}
be the affine group, with multiplication $(b_0, a_0) (b_1, a_1) = (b_0 + a_0 \, b_1, a_0 \, a_1)$
and Haar measure $d\mu_G(b,a) = da \, db / a^2$.
Then $G$ acts on the Hilbert space $\mathcal{H} =L^2(\mathbb{R})$
by virtue of the representation ${\pi(b, a) f = a^{-1/2} \cdot f(a^{-1} (\cdot - b))}$,
and Calder\'on's formula reads
\begin{align}\label{eq:reproducing_id}
  f = \int_G
        \langle f, \pi(x) \psi \rangle \, \pi (x) \psi
      \; d\mu_G (x),
  \qquad \text{for all} \quad f \in \mathcal{H},
\end{align}
which means that the \emph{wavelet transform}
\begin{align}\label{eq_wav_trans}
  W_\psi: \mathcal{H} \to L^2(G),
  \qquad W_\psi f(b,a) = \langle f, \pi(b,a)\psi \rangle
\end{align}
is an isometry into a subspace of $L^2(G)$ \cite{grossmann1986transforms, heil1989continuous}.
In terms of the wavelet transform, the discrete expansion \eqref{eq:discrete_representation_2}
reads
\begin{align}\label{eq:discrete_representation_3a}
  f = \sum_{\lambda \in \Lambda}
        W_\psi f (\lambda) \, \widetilde{\psi_{\lambda}},
\end{align}
or, equivalently,
\begin{align}\label{eq:discrete_representation_3}
  W_\psi f = \sum_{\lambda \in \Lambda}
               W_\psi f (\lambda) \, W_\psi \widetilde{\psi_{\lambda}},
\end{align}
showing that the function $W_\psi f$ can be reconstructed
from its samples taken along  the subset $\Lambda \subset G$.

The property of $\{\psi_{\lambda}: \lambda \in \Lambda\}$ forming a family of molecules
can also be reformulated in terms of the wavelet transform,
namely by requiring that there exists a well-localized envelope $\Phi \in L^1(G)$ such that
\begin{align}\label{eq_intro_mol_1}
  |{W_\psi \widetilde{\psi_{\lambda}}(b,a)}|
  \leq \Phi \big( \lambda^{-1} (b,a) \big),
  \qquad \text{for all} \quad (b,a) \in G \text{ and } \lambda \in \Lambda.
\end{align}
Precise decay conditions on $\Phi$ amount to diverse qualities
of the set of molecules, such as the order of differentiability, rate of decay,
and number of vanishing moments \cite{Holschneider95, memoir, groechenig2009molecules}.
In light of \eqref{eq_intro_mol_1}, the molecule condition
pertains to the locality of the sampling expansion \eqref{eq:discrete_representation_3},
explaining its fundamental role in approximation theory:
not only do the samples ${\{W_\psi f (\lambda): \lambda \in \Lambda\}}$ characterize $W_\psi f$,
but the value $W_\psi f(b,a)$ can be approximated,
up to a well-controlled error, by a finite subset of samples
$\{W_\psi f (\lambda) : \lambda \in \Lambda \cap (b,a) U\}$, with $U \subset G$ compact.

\smallskip

Under suitable admissibility conditions,
a general unitary group representation $\pi:G \curvearrowright \mathcal{H}$
admits a \emph{reproducing formula} as in \eqref{eq:reproducing_id}
\cite{grossmann1985transforms,MR2130226, duflo1976on}.
Provided that the underlying vector $\psi \in \mathcal{H}$ is chosen adequately,
the integral reproducing formula can be approximated by Riemann-like sums,
yielding a discrete expansion of the form \eqref{eq:discrete_representation_3a}.
The theory of coorbit spaces
\cite{feichtinger1989banach1,feichtinger1989banach2,groechenig1991describing}
takes this approach further, showing that such discretizations extend to function spaces
that are defined by imposing adequate decay and integrability conditions
on the abstract wavelet transform \eqref{eq_wav_trans}.
Besov and Triebel-Lizorkin spaces are, for example,
coorbit spaces associated with the affine group \eqref{eq_aff_group}, while the
Schr\"odinger representation of the Heisenberg group leads to $L^p$ versions
of Bargmann-Fock spaces \cite{MR157250}, and the action of $\mathrm{SL}(2, \mathbb{R})$
on the unit disk leads to weighted Bergman spaces \cite{MR810448}.
See \cite[Section 3.3]{groechenig1991describing} for these and other examples.

Coorbit theory revealed that the classical atomic decompositions of Besov-Triebel-Lizorkin,
Bargmann-Fock, and Bergman spaces are all consequences of a single phenomenon:
the action of a suitable group.
The theory allowed to revisit classical atomic decompositions in an abstract and unified way,
and also lead to new examples.
Yet, as noted in the influential monograph \cite[Introduction]{memoir},
the discretization results in \cite{feichtinger1989banach1, groechenig1991describing}
fall slightly short of fully re-deriving the classical ones.
Indeed, while they show that any suitably admissible $\psi \in \mathcal{H}$
and any sufficiently dense set $\Lambda \subset G$ provide an expansion as in
\eqref{eq:discrete_representation_3a} which is also convergent in coorbit spaces,
the techniques in \cite{feichtinger1989banach1, groechenig1991describing} are not sufficient
to deduce the more precise information on the corresponding dual elements
$\{\widetilde{\psi_{\lambda}} :\lambda \in \Lambda\}$ that concrete constructions do deliver.
For instance, when applied to Calder\'on's formula \eqref{eq:calderon_id},
coorbit theory does not produce an expansion \eqref{eq:discrete_representation_2}
consisting of molecules \eqref{eq_intro_mol_1}, as, for example, \cite[Theorem~1.5]{memoir} does.
The absence of an abstract notion of molecule was noted in \cite{groechenig2009molecules},
where coorbit molecules are formally introduced and their basic properties are studied.
The question remained open, however, whether atomic decompositions consisting of molecules
can be obtained in full generality.
In the present article, we answer this question in the affirmative.
As an application of more general results, we sharpen the discretization techniques
for integrable group representations, and bridge a gap between what is achievable
with abstract and concrete methods.

\subsection{Main results}

\subsubsection{Sampling and frames in RKHS}

The discretization techniques from \cite{feichtinger1989banach1, groechenig1991describing}
have been adapted and extended to many different contexts.
Most of these can be modeled by a \emph{reproducing kernel Hilbert space} (RKHS)
$\RKHS \subset L^2 (G)$ of functions on a locally compact $\sigma$-compact group $G$;
see for instance \cite{MR3034426,MR2584749,bernier1996wavelets,MR2855776}.
As commonly done in the literature, we will assume that the \emph{reproducing kernel}
${k : G \times G \to \CC}$, that is, the integral kernel representing the projection
$L^2(G) \to \RKHS$, has \emph{off-diagonal decay}:
\begin{align}\label{eq_od_intro}
  \abs{k(x,y)} \leq \Phi(y^{-1}x),
  \qquad \text{for all} \quad x,y \in G,
\end{align}
where $\Phi$ belongs to a suitable class $\Wstw(G)$ of envelopes determined by $w$,
a submultiplicative weight on $G$.
A \emph{system of molecules} $\{h_\lambda: \lambda \in \Lambda\}$
is a set of functions $h_{\lambda} \in \RKHS$, which is indexed by a subset $\Lambda \subset G$,
and which satisfies the enveloping estimate
\begin{align}
\label{eq_intro_mol}
  \abs{h_\lambda(x)}
  \leq \min \big\{
              \Psi(\lambda^{-1}x), \quad
              \Psi(x^{-1} \lambda)
            \big\},
  \qquad \text{for all} \quad \lambda \in \Lambda \text{ and } x \in G,
\end{align}
with $\Psi \in \Wstw(G)$; see Section~\ref{sub:Envelopes}.
(This definition thus depends implicitly on the weight $w$.)

Functional expansions associated with a (possibly projective) group representation
fit this model by means of the isometric isomorphism
provided by the abstract wavelet transform \eqref{eq_wav_trans},
since the range of the wavelet transform is a RKHS.
The off-diagonal decay of the kernel amounts to suitable \emph{admissibility conditions},
which are stronger than square-integrability; see for instance \cite{MR2855776, groechenig2009molecules}.
Bergman spaces of analytic functions with general weights are also an example of RKHS
where the sampling and interpolation problems are interesting \cite{seip2004interpolation}.
These spaces are not known to be coorbit spaces of a group representation
except for certain specific weights, but they do fit the present framework;
see Example~\ref{ex_fock}.
Further examples are spaces of functions with finite rate of innovation
\cite{sun2006nonuniform, MR2390281}.

We now present our main result concerning sampling in RKHS.
For this result, we assume that the reproducing kernel $k$ satisfies
a certain off-diagonal decay condition (\Ktwo),
and a mild continuity requirement (\Kthree), both described in Section~\ref{sub:RKHS}.
Under these assumptions, we prove that sampling formulas can be implemented by molecules,
as stated in the following theorem.
Here, and below, $k$ will always denote the reproducing kernel of $\RKHS$.

\begin{theorem}\label{thm:frame_main1_intro}
  Let $\RKHS \subspace L^2 (G)$ be a reproducing kernel Hilbert space
  satisfying (\Ktwo) and (\Kthree).
  Then there exists a compact unit neighborhood $U \subset G$
  such that, for any $\Lambda \subset G$
  satisfying $G = \bigcup_{\lambda \in \Lambda} \lambda U$
  and $\sup_{x \in G} \# (\Lambda \cap xU) < \infty$,
  there exists a set of molecules $(h_{\lambda})_{\lambda \in \Lambda}$ in  $\RKHS$ such that
  \begin{align}\label{eq_exp1}
    f = \sum_{\lambda \in \Lambda}
          f(\lambda) \, h_\lambda
      = \sum_{\lambda \in \Lambda}
          \langle f,h_\lambda \rangle \, k(\cdot,\lambda)
    \qquad \text{for all} \quad f \in \RKHS ,
  \end{align}
  with unconditional convergence of the series in $\RKHS$.
\end{theorem}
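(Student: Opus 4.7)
The plan is to construct $(h_\lambda)_{\lambda\in\Lambda}$ as the canonical dual of the sampling system $(k(\cdot,\lambda))_{\lambda\in\Lambda}$ by inverting a smeared frame operator on $\RKHS$, and then to upgrade this inverse from ``bounded on $\RKHS$'' to ``convolution-dominated'' by invoking the paper's local holomorphic calculus. That upgrade is exactly what translates into the molecule bound~\eqref{eq_intro_mol}, and it is the main obstacle.

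First I would introduce a bounded uniform partition of unity $(\psi_\lambda)_{\lambda\in\Lambda}$ subordinate to the cover $\{\lambda U\}_{\lambda\in\Lambda}$, which exists thanks to the density and relative-separation assumptions, and consider the weighted frame operator
$$S_\Lambda f(x):=\sum_{\lambda\in\Lambda}\Big(\int\psi_\lambda(y)\,dy\Big)\,f(\lambda)\,k(x,\lambda),\qquad f\in\RKHS.$$
Inserting $f(\lambda)=\int k(\lambda,z)\,f(z)\,dz$ exhibits the integral kernel
$K_{S_\Lambda}(x,z)=\sum_\lambda\bigl(\int\psi_\lambda\bigr)\,k(x,\lambda)\,k(\lambda,z),$
which is self-adjoint by the Hermitian symmetry of $k$, and whose boundedness in both the operator norm on $\RKHS$ and the convolution-dominated norm follows from the envelope condition~(\Ktwo) and the relative separation bound $\sup_x\#(\Lambda\cap xU)<\infty$.

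To show that $S_\Lambda$ is a small perturbation of the identity, I would exploit the reproducing identity $k(x,z)=\int k(x,y)\,k(y,z)\,dy$, split the integration along the BUPU, and subtract term by term to arrive at
$$K_{I-S_\Lambda}(x,z)=\sum_{\lambda}\int\psi_\lambda(y)\bigl[k(x,y)\,k(y,z)-k(x,\lambda)\,k(\lambda,z)\bigr]\,dy.$$
On $\supp\psi_\lambda\subset\lambda U$, the continuity hypothesis~(\Kthree) controls the bracket uniformly as $U$ contracts, while (\Ktwo) supplies the decay needed to sum over $\lambda$. The upshot is a single envelope $\Phi_U\in\Wstw(G)$ dominating $|K_{I-S_\Lambda}(x,z)|$ by $\Phi_U(z^{-1}x)$, whose $\Wstw$-norm tends to $0$ as $U\searrow\{e\}$, uniformly in all admissible $\Lambda$.

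Fix $U$ small enough that $\|I-S_\Lambda\|_{\goodMatrices}<1$. The main obstacle is now resolved by the paper's local holomorphic calculus for convolution-dominated operators on $G$, which yields $S_\Lambda^{-1}\in\goodMatrices$ with an envelope $\Psi\in\Wstw(G)$; ordinary Neumann inversion would only give boundedness on $\RKHS$ and would lose the off-diagonal structure. Setting $h_\lambda:=S_\Lambda^{-1}\bigl[\bigl(\int\psi_\lambda\bigr)\,k(\cdot,\lambda)\bigr]$, the identity $f=S_\Lambda^{-1}S_\Lambda f$ produces $f=\sum_\lambda f(\lambda)\,h_\lambda$, and the self-adjointness of $S_\Lambda$ transforms this into $f=\sum_\lambda\langle f,h_\lambda\rangle\,k(\cdot,\lambda)$. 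Finally, since $k(\cdot,\lambda)$ is pointwise dominated both by $\Phi(\lambda^{-1}x)$ (via (\Ktwo)) and by $\Phi(x^{-1}\lambda)$ (via $\overline{k(x,y)}=k(y,x)$), applying $S_\Lambda^{-1}$, whose convolution-type envelope acts on both sides, produces a common envelope in $\Wstw(G)$ bounding $h_\lambda(x)$ in terms of both $\lambda^{-1}x$ and $x^{-1}\lambda$, giving~\eqref{eq_intro_mol} and completing the argument.
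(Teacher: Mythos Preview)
Your approach has a genuine gap at the step where you claim that ``the continuity hypothesis (\Kthree) controls the bracket uniformly as $U$ contracts.''  The bracket in question is
\[
  k(x,y)\,k(y,z)-k(x,\lambda)\,k(\lambda,z),
\]
and estimating it pointwise (so as to obtain a convolution envelope $\Phi_U$ with $\|\Phi_U\|_{\Wstw}\to 0$) requires control on differences like $k(x,y)-k(x,\lambda)$ for $y$ near $\lambda$.  Condition (\Kthree), however, is deliberately weaker: it only bounds $\bigl|\,|f(x)|^2-|f(y)|^2\bigr|$ for $f\in\RKHS$, i.e.\ oscillation of \emph{squared absolute values}.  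The paper stresses (see Example~\ref{ex_fock}) that in weighted Fock spaces (\Kthree) holds via Bergman bounds on $\nabla(|f|^2 e^{-2\phi})$, while pointwise kernel oscillation estimates of the type~\eqref{eq:SUC} may fail.  So (\Kthree) does not furnish the estimate you need, and your route to $\|I-S_\Lambda\|_{\goodMatrices}<1$ breaks down in exactly the examples the paper is designed to cover.

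Relatedly, you misread what the local holomorphic calculus (Theorem~\ref{thm:integral_eps_inverse}) provides: its input is a \emph{fixed} envelope $\Phi$ for the kernel of $S_\Lambda$ together with \emph{operator-norm} smallness $\|S_\Lambda-\identity_{\RKHS}\|_{\RKHS\to\RKHS}\le\eps(\Phi)$, not convolution-dominated smallness.  The paper obtains the fixed envelope from (\Ktwo) alone (Lemma~\ref{lem:molecule_convintegeral}(ii)), and the operator-norm bound from (\Kthree) via the estimate $\bigl|\,|f(\lambda)|^2-|f(x)|^2\bigr|$ at the level of the frame inequality (Lemma~\ref{lem:sufficient_frame}); the ``min'' trick inside the proof of Theorem~\ref{thm:integral_eps_inverse} then manufactures the small envelope $\Phi_\eps=\min\{\eps\beta,\Theta'+\Phi'\}$ internally.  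If you had actually proved $\|I-S_\Lambda\|_{\goodMatrices}<1$, a plain Neumann series would already suffice and the holomorphic calculus would be unnecessary---its whole purpose is to bridge the gap that your argument tries to skip.
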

In technical jargon, part of the statement of Theorem~\ref{thm:frame_main1_intro} is
that the reproducing kernels $\big( k(\cdot,\lambda) \big)_{ \lambda \in \Lambda}$
form a \emph{frame} for $\RKHS$---see Section~\ref{sub:FramesAndRieszSequences}.
The novelty of our result is that it provides a \emph{dual frame}
$(h_{\lambda})_{\lambda \in \Lambda}$ \emph{consisting of molecules}.
Moreover, the envelope for the molecules $(h_{\lambda})_{\lambda \in \Lambda}$ \eqref{eq_intro_mol}
belongs to \emph{the same weight class} $\Wstw(G)$ as
the envelope for the kernel $k$ \eqref{eq_od_intro}.
When applied to the RKHS associated with an integrable group representation,
Theorem~\ref{thm:frame_main1_intro} supersedes the discretization results
in \cite{feichtinger1989banach1, groechenig1991describing}
by providing more precise information on the dual functions $h_\lambda$;
see Section~\ref{sec:CoorbitSpaces}.

\subsubsection{Interpolation and Riesz sequences}
\label{sec_int}

We also consider the dual problem of finding \emph{Riesz sequences of reproducing kernels},
that is, families $\{k(\cdot, \lambda): \lambda \in \Lambda\}$
indexed by a set $\Lambda \subset G$ such that the following norm-equivalence holds:
\begin{align*}
  \Big\|
    \sum_{\lambda \in \Lambda}
      c_\lambda \, k(\cdot, \lambda)
  \Big\|_{\RKHS}
  \asymp \norm{c}_{\ell^2(\Lambda)}
  \qquad \text{for all} \quad c = (c_\lambda)_{\lambda \in \Lambda} \in \ell^2 (\Lambda).
\end{align*}
When $\{k(\cdot, \lambda): \lambda \in \Lambda\}$ is a Riesz sequence,
$\Lambda$ solves the following interpolation problem:
given data $a \in \ell^2(\Lambda)$, there exists a function $f \in \RKHS$ such that
\begin{align}\label{eq_interp}
  f(\lambda) = a_\lambda,  \quad \mbox{ for all }\lambda \in \Lambda.
\end{align}
The novel information in our results concerns the so-called \emph{biorthogonal system}
$(h_\lambda)_{ \lambda \in \Lambda }$, characterized by
\begin{align*}
  (h_\lambda)_{\lambda \in \Lambda}
  \subset \overline{\mathrm{span}} \{ k (\cdot, \lambda) : \lambda \in \Lambda\}
  \qquad \text{and} \qquad
  \langle k(\cdot, \lambda), h_{\lambda'} \rangle = \delta_{\lambda,\lambda'},
  \qquad \text{for all} \quad \lambda, \lambda' \in \Lambda.
\end{align*}
The biorthogonal system implements the coefficient functionals related to the Riesz sequence:
\begin{align*}
  f = \sum_{\lambda \in \Lambda}
        c_\lambda \, k(\cdot, \lambda)
  \mapsto c_{\lambda_0} = \langle f, h_{\lambda_0} \rangle,
\end{align*}
and also provides the interpolant
\begin{align}
\label{eq_interp_2}
f_a = \sum_{\lambda \in \Lambda} a_\lambda h_\lambda, \qquad a \in \ell^2(\Lambda)
\end{align}
satisfying \eqref{eq_interp}.

In order to study Riesz sequences, we assume that the diagonal of the reproducing kernel $k$ is bounded below;
see Condition~$(\Kone)$ in Section~\ref{sub:RKHS}.
Under this assumption, we prove the following.

\begin{theorem} \label{thm:riesz_main2_intro}
  Let $\RKHS \subspace L^2 (G)$ be a reproducing kernel Hilbert space
 satisfying (\Kone) and (\Ktwo).
  Then there exists a compact unit neighborhood $K \subset G$ such that,
  for any $\Lambda \subset G$ satisfying the separation condition
  \begin{align}\label{eq_sep}
    \lambda K \cap \lambda' K = \emptyset,
    \qquad \text{for all } \lambda, \lambda' \in \Lambda \text{ with } \lambda \neq \lambda',
  \end{align}
  the kernels $\big( k(\cdot, \lambda) \big)_{\lambda \in \Lambda}$ form a Riesz sequence
  in $\RKHS$ whose biorthogonal system forms a family of of molecules.
  In addition, under \eqref{eq_sep}, there exists an orthonormal sequence
  $(g_\lambda)_{\lambda \in \Lambda}$ of molecules
  (which is \emph{not} necessarily of the form $g_\lambda = k(\cdot, \lambda)$).
\end{theorem}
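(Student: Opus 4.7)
The plan is to reduce everything to the spectral analysis of the Gramian matrix
\[
  \gramian_{\lambda,\lambda'} \;=\; \langle k(\cdot,\lambda'), k(\cdot,\lambda)\rangle \;=\; k(\lambda,\lambda'),
  \qquad \lambda,\lambda' \in \Lambda,
\]
and then to invoke the holomorphic calculus for convolution-dominated operators developed in the body of the paper. By (\Ktwo) there is an envelope $\Phi \in \Wstw(G)$ with $|\gramian_{\lambda,\lambda'}| \leq \Phi(\lambda'^{-1}\lambda)$, so $\gramian$ is a \emph{convolution-dominated} matrix indexed by $\Lambda$; by (\Kone) its diagonal $\gramian_{\lambda,\lambda} = k(\lambda,\lambda)$ is bounded below by some constant $c>0$.

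First, I would exploit the $K$-separation to make $\gramian$ a small off-diagonal perturbation of the diagonal. Since $\lambda K \cap \lambda' K = \emptyset$ for $\lambda \neq \lambda'$ forces $\lambda'^{-1}\lambda \notin KK^{-1}$, a standard Schur/Wiener-amalgam estimate gives
\[
  \sup_{\lambda \in \Lambda} \,
    \sum_{\lambda' \in \Lambda,\, \lambda' \neq \lambda}
      |\gramian_{\lambda,\lambda'}|
  \;\leq\; C \int_{G \setminus KK^{-1}} \Phi(x)\, d\mu_G(x),
\]
where the constant $C$ is uniform in $K$ (it reflects only the $K$-separation packing). Since $\Phi \in \Wstw(G) \subset L^1(G)$, the right-hand side tends to $0$ as $K$ grows, so for $K$ large enough the operator $\gramian$ is bounded and invertible on $\ell^2(\Lambda)$ with spectrum in a compact interval $[m,M] \subset (0,\infty)$. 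The Riesz-sequence property follows at once.

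Next, I would apply the holomorphic calculus for convolution-dominated operators to $\gramian$: because $z \mapsto 1/z$ and $z \mapsto 1/\sqrt{z}$ are holomorphic on a neighborhood of $[m,M]$, both $\gramian^{-1}$ and $\gramian^{-1/2}$ remain convolution-dominated, with envelopes $\Phi_1, \Phi_2 \in \Wstw(G)$. I would then set
\[
  h_\lambda \;:=\; \sum_{\lambda' \in \Lambda}
                     (\gramian^{-1})_{\lambda,\lambda'}\, k(\cdot,\lambda'),
  \qquad
  g_\lambda \;:=\; \sum_{\lambda' \in \Lambda}
                     (\gramian^{-1/2})_{\lambda,\lambda'}\, k(\cdot,\lambda').
\]
Biorthogonality of $(h_\lambda)$ with $(k(\cdot,\lambda))$ and orthonormality of $(g_\lambda)$ follow directly from $\gramian\gramian^{-1}=I$ and $\gramian^{-1/2}\gramian\gramian^{-1/2}=I$. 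For the molecule bound, the estimate
\[
  |h_\lambda(x)|
  \;\leq\; \sum_{\lambda' \in \Lambda}
             \Phi_1(\lambda'^{-1}\lambda)\, \Phi(\lambda'^{-1}x)
\]
expresses $|h_\lambda(x)|$ as a discrete convolution over the separated set $\Lambda$, which is dominated pointwise by the integral convolution $\Phi_1 * \Phi$; submultiplicativity of $w$ ensures $\Phi_1 * \Phi \in \Wstw(G)$. Running the same argument with the two presentations of the envelope \eqref{eq_od_intro} afforded by (\Ktwo) produces a single envelope $\Psi \in \Wstw(G)$ realizing the two-sided bound $|h_\lambda(x)| \leq \min\{\Psi(\lambda^{-1}x),\Psi(x^{-1}\lambda)\}$ required in \eqref{eq_intro_mol}. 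The analogous computation with $\Phi_2$ in place of $\Phi_1$ yields the orthonormal molecules $(g_\lambda)$.

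The main obstacle is the holomorphic-calculus step. For groups with possibly non-polynomial growth the classical Gelfand--Wiener inverse-closedness theorems are unavailable, and the argument must rely on the localized holomorphic calculus developed earlier in the paper, applied moreover to matrices indexed by an irregular separated set $\Lambda$ rather than by the group itself. Verifying that $\gramian^{-1}$ and $\gramian^{-1/2}$ still possess envelopes in the same weight class $\Wstw(G)$---so that the convolution with the kernel envelope genuinely yields a molecule---is where the paper's central technical innovation is essential.
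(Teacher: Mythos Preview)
Your overall architecture is correct and matches the paper's: analyze the Gramian, apply the holomorphic calculus to obtain $\gramian^{-1}$ and $\gramian^{-1/2}$ as convolution-dominated matrices, and then push those matrices through the kernel family to produce the biorthogonal and orthonormal molecules (this last step is Lemma~\ref{lem:molecule_convmatrix}). The gap is in the holomorphic-calculus step itself.

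The local spectral invariance proved in the paper (Theorem~\ref{thm:invertibility_convdom_matrix}) does \emph{not} assert that $\phi(\gramian) \in \goodMatrices(\Lambda)$ whenever $\phi$ is holomorphic on a neighbourhood of $\sigma(\gramian)$; it requires the much stronger hypothesis $\|\gramian - \identity_{\ell^2(\Lambda)}\|_{\ell^2\to\ell^2} \leq \eps$, where $\eps$ is a small number determined by the envelope $\Phi$. Your unnormalized Gramian has diagonal $\gramian_{\lambda,\lambda} = k(\lambda,\lambda) \in [\alpha,\beta]$ by (\Kone), so even after the off-diagonal has been driven to zero it sits at distance at least $\max\{|1-\alpha|,|\beta-1|\}$ from the identity---and nothing in (\Kone) forces $\alpha,\beta$ near $1$. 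A global scalar rescaling does not help, since the diagonal is genuinely non-constant; and one cannot simply enlarge $\delta$ because $\phi(z)=1/z$ is holomorphic on $B_\delta(1)$ only for $\delta \leq 1$.

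The paper repairs this by passing to the \emph{normalized} kernels $\widetilde{k_\lambda} := k_\lambda / \|k_\lambda\|_{L^2}$. Their Gramian $\widetilde{\gramian}$ has $1$'s on the diagonal and envelope $\alpha^{-1}(\Theta\ast\Theta)$, and Lemma~\ref{lem:AlmostTightRieszSequenceExistence} shows that $K$-separation with $K$ large forces the Riesz bounds of $(\widetilde{k_\lambda})$ arbitrarily close to $1$, hence $\|\widetilde{\gramian} - \identity\| \leq \eps$. Now Theorem~\ref{thm:invertibility_convdom_matrix} legitimately yields $\widetilde{\gramian}^{-1}, \widetilde{\gramian}^{-1/2} \in \goodMatrices(\Lambda)$; the normalization is then undone by a bounded diagonal multiplication, which preserves $\goodMatrices$. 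Once you insert this normalization, your remaining steps (the discrete-convolution molecule bound and the formulas for $h_\lambda,g_\lambda$) go through essentially as written, modulo some care with complex conjugation in the biorthogonality check since $\gramian$ is Hermitian rather than symmetric.
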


With respect to the interpolation problem \eqref{eq_interp}, Theorem \ref{thm:riesz_main2_intro}
implies that the interpolant defined in \eqref{eq_interp_2} reflects the size profile of the data
$a \in \ell^2(\Lambda)$.
When applied to the RKHS associated with an integrable group representation,
Theorem~\ref{thm:riesz_main2_intro} improves on corresponding results from \cite{feichtinger1989banach2}
by providing such qualitative information on the biorthogonal system; see Section~\ref{sec:CoorbitSpaces}.

\subsubsection{The canonical dual frame}

The dual frame $\{h_\lambda: \lambda \in \Lambda\}$ in \eqref{eq_exp1}
is in general only one of many systems yielding such an expansion.
Among all dual systems, the one providing coefficients with minimal $\ell^2$-norm
plays a distinguished role and is called the \emph{canonical dual frame};
see Section~\ref{sub:FramesAndRieszSequences}.
Under special conditions on the sampling set $\Lambda$,
we show that in fact the \emph{canonical} dual frame forms a system of molecules.

More precisely, to each $U$ and $\Lambda$ as in Theorem~\ref{thm:frame_main1_intro}
we associate a certain \emph{measure of uniformity} $\uniformity(\Lambda;U) \in [1,\infty]$;
see Definition \ref{def:Uniformity} below.
Uniform sets $\Lambda$ such as lattices or quasi-lattices have uniformity $\uniformity(\Lambda;U)=1$
for suitable $U$. Given this notion of uniformity, our result regarding the localization
of the canonical dual frame reads as follows:

\begin{theorem}\label{thm:frame_main2_intro}
  Let $\RKHS \subspace L^2 (G)$ be a reproducing kernel Hilbert space
  satisfying (\Ktwo) and (\Kthree).
  Then there exists a compact unit neighborhood $U \subset G$ and an $\eps > 0$ such that:
  for any $\Lambda \subset G$ satisfying
  $G = \bigcup_{\lambda \in \Lambda} \lambda U$, $\sup_{x \in G} \# (\Lambda \cap xU) < \infty$
  and $\uniformity(\Lambda; U) \leq 1 + \eps$, the kernels
  $\big( k(\cdot,\lambda) \big)_{\lambda \in \Lambda}$ form a frame for $\RKHS$
  whose \emph{canonical} dual frame consists of molecules.
\end{theorem}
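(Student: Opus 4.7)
The canonical dual frame is $\widetilde{h}_\lambda = S^{-1} k(\cdot, \lambda)$, where $S f = \sum_{\lambda \in \Lambda} \langle f, k(\cdot, \lambda) \rangle \, k(\cdot, \lambda)$ is the frame operator on $\RKHS$. To establish the molecular envelope bound \eqref{eq_intro_mol} for $\widetilde{h}_\lambda$, it suffices to show that $S^{-1} P$ (where $P \colon L^2(G) \to \RKHS$ is the projection, whose integral kernel is $k$) lies in a Banach algebra $\mathcal{A}$ of operators with off-diagonal decay controlled by $\Wstw(G)$. Note that Theorem~\ref{thm:frame_main1_intro} only produces \emph{some} dual of molecular type; the extra work here is precisely to invert $S$ \emph{inside} $\mathcal{A}$, and to this end we would invoke the local holomorphic calculus for convolution-dominated operators announced in the abstract.

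First I would verify that $S$ itself belongs to $\mathcal{A}$ under the hypotheses of Theorem~\ref{thm:frame_main1_intro}. The integral kernel of $S$ on $L^2(G)$ is
\[
  K_S(x,y) \;=\; \sum_{\lambda \in \Lambda} k(x,\lambda)\,\overline{k(y,\lambda)},
\]
which by (\Ktwo) is bounded pointwise by $\sum_\lambda \Phi(\lambda^{-1}x)\,\Phi(\lambda^{-1}y)$. The bounded-overlap assumption $\sup_x \#(\Lambda \cap xU) < \infty$ converts this sum into (a constant multiple of) a pointwise convolution of two functions in $\Wstw(G)$, which lies again in $\Wstw(G)$ by convolution-closedness. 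Hence $S \in \mathcal{A}$.

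The decisive step exploits the uniformity assumption $\uniformity(\Lambda;U) \le 1 + \eps$. Rewriting the reproducing identity symmetrically as $P = \int_G \langle \cdot, k(\cdot,y) \rangle \, k(\cdot, y) \, d\mu_G(y)$ shows that $cS$ is a Riemann-sum discretization of $P$, where $c > 0$ is a density constant determined by $U$. The quantity $\uniformity(\Lambda;U)$ measures exactly the defect of this discretization, and I would show that it yields a kernel-level estimate $\|cS - P\|_{\mathcal{A}} \le \eta(\eps)$ with $\eta(\eps) \to 0$ as $\eps \to 0$. For $\eps$ small enough that $\|P - cS\|_{\RKHS \to \RKHS} < 1$, the Neumann series
\[
  S^{-1} P \;=\; c \sum_{n=0}^{\infty} (P - cS)^n \, P
\]
converges in $\mathcal{A}$. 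Applying the resulting operator $S^{-1} P \in \mathcal{A}$ to $k(\cdot,\lambda) = P\,k(\cdot,\lambda)$ yields the envelope bound \eqref{eq_intro_mol} for $\widetilde{h}_\lambda$.

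The main obstacle is the quantitative approximation $\|cS - P\|_\mathcal{A} \to 0$ as $\uniformity(\Lambda;U) \to 1$: one must control, in a Wiener-type kernel norm, the discrepancy between a discrete sum over $\Lambda$ and its continuous analogue, uniformly over admissible sampling sets. This is precisely where the \emph{local} character of the holomorphic calculus becomes indispensable---on groups of non-polynomial growth, $\mathcal{A}$ need not be inverse-closed in $B(L^2(G))$, and inversion within $\mathcal{A}$ is guaranteed only for operators sufficiently close to reference elements such as $P$. The uniformity condition is what pushes $cS$ into this local region, making the Neumann expansion admissible and completing the argument.
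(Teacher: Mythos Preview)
Your overall architecture is right---show that a suitable multiple of the frame operator is close to the identity, then invert via a Neumann/holomorphic-calculus argument inside the class of convolution-dominated operators---but two key mechanisms are misidentified.

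First, the uniformity condition $\uniformity(\Lambda;U)\le 1+\eps$ does \emph{not} by itself control the discretization error $\|cS-P\|$ in any norm. Even for a lattice ($\uniformity=1$), the Riemann-sum error depends on the size of $U$. The paper's argument is two-layered: condition (\Kthree) is used to choose $U$ small enough that the \emph{weighted} operator $S_0 f=\sum_\lambda \mu_G(U_\lambda)\,f(\lambda)\,k_\lambda$ satisfies $\|S_0-\identity_{\RKHS}\|_{\RKHS\to\RKHS}\le\eps$ for \emph{any} disjoint cover $(U_\lambda)$; only then is uniformity invoked to replace the variable weights $\mu_G(U_\lambda)$ by a single constant $\tau$, yielding $\|\tau S-S_0\|\lesssim\eps$ and hence $\|\tau S-\identity_{\RKHS}\|\lesssim\eps$. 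Your proposal skips the first layer entirely, and without it there is no reason $cS$ should be close to $P$.

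Second, your claim that one can obtain $\|cS-P\|_{\mathcal A}\le\eta(\eps)\to 0$ is not how the local calculus works, and is likely false: the \emph{envelope} of $K_S-k$ does not shrink with uniformity. What the paper establishes is smallness only in the \emph{operator} norm on $\RKHS$. The passage from this to convergence of the Neumann series in the envelope class relies on a pointwise trick you do not mention: from $\|T_H-\identity_\RKHS\|\le\eps$ and $\|k_x\|_{L^2}^2\le\beta$ one gets $|H(x,y)-k(x,y)|\le\eps\beta$ uniformly, while (\Ktwo) gives a fixed envelope bound; hence $|H-k|\le\Phi_\eps:=\min\{\eps\beta,\,\Theta+\Phi\}$, and $\|\Phi_\eps\|_{\Wstw}\to 0$ by dominated convergence. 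This is the engine of Theorem~\ref{thm:integral_eps_inverse}, and it is what makes the $\eps$ in the local calculus depend on the \emph{shape} of the envelope rather than on its norm. Without this step, your Neumann series has no reason to converge in $\mathcal A$ on a group lacking spectral invariance.
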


The conditions of Theorem~\ref{thm:frame_main2_intro} are concretely satisfied
whenever $G$ admits lattices---or quasi-lattices---of arbitrarily high density
(or, equivalently, of arbitrarily small volume).
Such special sets, however, do not always exists.
Yet, we prove that the hypothesis of Theorem~\ref{thm:frame_main2_intro}
can always be met, and, as an application, derive the following existence result.

\begin{theorem}\label{thm:exist1_intro}
  Let $\RKHS \subspace L^2 (G)$ be a reproducing kernel Hilbert space satisfying (\Ktwo) and (\Kthree).
  There exists a frame of reproducing kernels $\big( k(\cdot,\lambda) \big)_{ \lambda \in \Lambda}$
  whose \emph{canonical} dual frame consists of molecules.
  In addition, there exists a \emph{Parseval frame} of molecules
  $(f_\lambda)_{ \lambda \in \Lambda}$ for $\RKHS$;
  that is, a frame for $\RKHS$ that coincides with its canonical dual frame.
\end{theorem}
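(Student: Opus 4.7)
The plan is to first construct a sampling set $\Lambda$ that meets all the hypotheses of Theorem~\ref{thm:frame_main2_intro}, then apply that theorem to obtain the first assertion, and finally renormalize via the inverse square root of the frame operator to pass to a Parseval frame. The delicate point lies in this last step: the Parseval frame must inherit the molecular decay, which will rely on the local holomorphic calculus for convolution-dominated operators developed in the body of the paper.

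For the construction of $\Lambda$, fix the compact unit neighborhood $U$ and the tolerance $\eps > 0$ supplied by Theorem~\ref{thm:frame_main2_intro}. I would pick a symmetric compact unit neighborhood $V$ with $V V^{-1} \subset U$ and take $\Lambda \subset G$ to be a maximal $V$-separated subset, which exists by Zorn's lemma and is countable because $G$ is $\sigma$-compact. Maximality forces $G = \bigcup_{\lambda \in \Lambda} \lambda V V^{-1} \subset \bigcup_{\lambda \in \Lambda} \lambda U$, while $V$-separation gives $\sup_{x \in G} \# (\Lambda \cap xU) < \infty$ with a bound controlled by the ratio of Haar volumes $\vol(UU^{-1})/\vol(V)$. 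The quantity $\uniformity(\Lambda;U)$ measures how much the translates $\lambda U$ overlap on average, so shrinking $V$ (while keeping $U$ fixed) makes the disjoint translates $\lambda V$ exhaust an arbitrarily large proportion of each $\lambda U$ and forces $\uniformity(\Lambda;U) \leq 1 + \eps$.

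Applying Theorem~\ref{thm:frame_main2_intro} with this $\Lambda$ yields the frame $\big( k(\cdot,\lambda) \big)_{\lambda \in \Lambda}$ whose canonical dual consists of molecules, which is the first half of the claim. For the second half, let $S$ be the associated frame operator and set $f_\lambda := S^{-1/2} k(\cdot,\lambda)$. Standard frame theory then gives that $(f_\lambda)_{\lambda \in \Lambda}$ is a Parseval frame that coincides with its own canonical dual. To show that the $f_\lambda$ are molecules, observe that the kernel of $S$ is built from the rank-one pieces $k(\cdot,\lambda)\otimes k(\cdot,\lambda)$ summed over the relatively separated set $\Lambda$, so $S$ is a convolution-dominated operator with envelope in $\Wstw(G)$ (its off-diagonal decay follows from (\Ktwo) and the separation of $\Lambda$). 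As $S$ is positive and bounded below on $\RKHS$ by the frame property, its spectrum is contained in a compact interval of $(0,\infty)$ on which $z \mapsto z^{-1/2}$ is holomorphic; the local holomorphic calculus for convolution-dominated operators then ensures that $S^{-1/2}$ is again convolution-dominated with envelope in $\Wstw(G)$. Acting with $S^{-1/2}$ on $k(\cdot,\lambda)$ therefore produces functions $f_\lambda$ satisfying the envelope condition \eqref{eq_intro_mol}.

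The main obstacle I foresee is this last step: establishing the envelope for $f_\lambda$ with both the left bound $\Psi(\lambda^{-1}x)$ and the right bound $\Psi(x^{-1}\lambda)$ required by the definition of molecule. This forces one to exploit both the left and the right convolution-dominated descriptions of $S^{-1/2}$, and is the place where the symmetry built into the class $\Wstw(G)$ together with the two-sided nature of the holomorphic calculus really pay off. The geometric construction of a near-uniform $\Lambda$ is, by comparison, a routine Zorn-type argument, though it still warrants care on groups of non-polynomial growth where the usual lattice constructions are unavailable.
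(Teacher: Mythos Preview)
Your proposal has two genuine gaps.

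First, the construction of $\Lambda$ with $\uniformity(\Lambda;U) \leq 1+\eps$ does not work as described. The uniformity in Definition~\ref{def:Uniformity} is the infimum, over admissible disjoint covers $(U_\lambda)_{\lambda \in \Lambda}$, of $\sup_{\lambda,\lambda'} \mu_G(U_\lambda)/\mu_G(U_{\lambda'})$; it is a \emph{ratio of measures}, not an overlap count. For a maximal $V$-separated set $\Lambda$, Lemma~\ref{lem:partition} gives a cover with $\lambda V \subset U_\lambda \subset \lambda V V^{-1} \subset \lambda U$, which only yields the bound $\mu_G(U_\lambda)/\mu_G(U_{\lambda'}) \leq \mu_G(V V^{-1})/\mu_G(V)$. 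Shrinking $V$ does not drive this ratio to $1$ (on $\R^n$ it is $2^n$ for balls regardless of radius), and there is no reason a maximal separated set should produce cells of nearly equal measure. The paper's Lemma~\ref{lem:ExistenceOfSetsWithGoodUniformity} instead starts from a coarse partition $G = \bigcupdot_{\lambda \in \Lambda_0} W_\lambda$ and subdivides each $W_\lambda$ into $N_\lambda \approx N \cdot \mu_G(W_\lambda)/\mu_G(V)$ pieces of \emph{exactly equal} measure (using that non-atomic Haar measure admits such splittings), choosing one point per piece; this forces all cells to have the same measure up to a factor $N/(N-1)$.

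Second, your appeal to the holomorphic calculus conflates \emph{local} spectral invariance with the full spectral invariance that the paper explicitly avoids assuming. Theorem~\ref{thm:integral_eps_inverse} does \emph{not} assert that every invertible convolution-dominated operator has a convolution-dominated inverse square root; it requires $\|T_H - \identity_{\RKHS}\| \leq \eps$ for a \emph{specific} $\eps$ depending on the envelope, and the holomorphic function must be defined on a fixed disk $B_\delta(1)$. Knowing only that $\sigma(\frameop) \subset [A,B] \subset (0,\infty)$ is insufficient---this is precisely the failure mode on groups of non-polynomial growth discussed in Section~\ref{sub:TechnicalOverview}. The paper's route to the Parseval frame (Theorem~\ref{thm:frame_main1}(ii)) is to first use Lemma~\ref{lem:sufficient_frame} to arrange that the \emph{weighted} system $\big(\mu_G(U_\lambda)^{1/2} k_\lambda\big)_{\lambda \in \Lambda}$ has frame operator $\eps$-close to the identity, and only then invoke Theorem~\ref{thm:integral_eps_inverse} to obtain a $w$-localized kernel for $\frameop^{-1/2}$. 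Note in particular that this step does not need the uniformity hypothesis at all, so the Parseval frame assertion already follows from Theorem~\ref{thm:frame_main1}(ii) without passing through Theorem~\ref{thm:frame_main2_intro}.
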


For a Parseval frame $(f_\lambda)_{ \lambda \in \Lambda}$,
we have $f = \sum_{\lambda \in \Lambda} \langle f, f_\lambda \rangle \, f_\lambda$
for all $f \in \RKHS$,
which resembles an orthogonal expansion.
Such systems are very useful for representing operators.

\subsection{Technical overview and related work}
\label{sub:TechnicalOverview}

\subsubsection{Convolution-dominated operators}

An operator on $L^2(G)$ will be called \emph{con\-vo\-lution-do\-mi\-nated}
(CD) if it is represented by an integral kernel $H$ satisfying the enveloping condition
\begin{align}
\label{eq_intro_H}
  \abs{H(x,y)} \leq \Phi(y^{-1} x),
  \qquad \text{for all} \quad x,y \in G,
\end{align}
for a well-localized envelope function $\Phi \in L^1(G)$;
see \cite{batayneh2016localized, fendler2010convolution, sun2008wiener} for related notions.
Although we do not use any specific results from the literature,
we formulate many of the technical lemmas in terms of such operators.
Crucially, we also consider convolution-dominated \emph{matrices} whose entries are indexed by
subsets of $G$ that do not need to be subgroups. Such matrices provide an abstract analog of the
\emph{almost diagonal matrices} considered in \cite{frazier1990discrete},
while CD operators can be seen as an analog of the Cotlar-type operators of \cite{memoir}.

\subsubsection{Local spectral invariance}

Certain algebras of CD operators are \emph{spectrally invariant}:
this means that if a convolution-dominated operator in the class is invertible,
then its inverse belongs to the same class of operators
\cite{MR1609144, fendler2008convolution, fendler2007on}%
\footnote{For non-discrete groups, the class of CD operators needs to be augmented
with the multiples of the identity.}.
A notable example of this phenomenon pertains to groups of polynomial growth,
that is, groups $G$ with a compact unit neighborhood $U$
whose powers have Haar measure $\mu_G$ dominated by a polynomial: $\mu_G(U^n) \lesssim n^k$
\cite{MR323951, tessera2010left, MR2020712,sun2007wiener}.
Convolution-domination is measured in terms of weighted $L^p$-norms, and,
under suitable assumptions on the weights, is preserved under inversion.
The theory of \emph{localized frames} \cite{groechenig2004localization,balan2006density}
exploits such results to conclude that the canonical dual frame of a frame of molecules
is itself a frame of molecules.
Thus, if spectral invariance tools are applicable,
the subtleties in our results are mostly trivial.

Spectral invariance, however, might fail for groups not possessing
polynomial growth; see \cite[Section 5]{fendler2016on} and \cite{MR2827175}
for examples on the affine and free groups respectively.
In the absence of spectral invariance, the theory of localized frames does not apply.
For example, there are smooth and fast-decaying wavelets with several vanishing moments
which yield a frame for $L^2(\mathbb{R})$, but do \emph{not} admit a dual frame
formed by molecules of a similar quality, and, indeed,
do not lead to $L^p$-expansions \cite{tchamitchian1987biorthogonalite, tchamitchian1986calcul};
see also \cite[Section~9.2]{meyer1997wavelets} and \cite{tao}.
For certain particular wavelet construction schemes, or for specific wavelets such as the Mexican hat,
establishing the validity of $L^p$-expansions is significantly more challenging
than that of $L^2$ expansions \cite{MR2770530, MR2754776, MR3034769, MR2989978}.

Our main technical tool is a substitute for the spectral invariance
of algebras of convolution-dominated (CD) operators, which we call \emph{local spectral invariance}.
For a CD operator $T$ acting on a RKHS $\RKHS$, we show that there is a trade-off
between \emph{the envelope} $\Phi$ in \eqref{eq_intro_H}, which we assume to belong to a certain
weighted envelope class $\Wstw(G)$, and the \emph{spectral tightness} of the operator---that is,
its distance to the identity as an operator on $\RKHS$.
When these two objects are adequately balanced, the holomorphic calculus maps $T$
into a CD operator with envelope in the \emph{same class} $\Wstw(G)$---see
Theorems~\ref{thm:integral_eps_inverse} and \ref{thm:invertibility_convdom_matrix}.
Similar methods can be traced back to \cite[Section~9]{frazier1990discrete}
and \cite[Chapter~3]{memoir}, and are closely related to classical off-diagonal estimates
for matrices and exponential weights \cite{MR0493419, jaffard1990proprietes},
and their application to wavelets and parabolic variants \cite{MR1015808, jaffard1990proprietes,MR3062475}.
A trade-off principle for Calder\'on-Zygmund operators in the spirit of local spectral invariance
appears also in \cite{lisu12, MR3034769}.

\subsubsection{Almost tight frames}
\label{sec_alti}
One step in the proof of our results is to construct an \emph{almost tight} sampling set
for a RKHS $\RKHS$, that is, a subset $\Lambda = \Lambda(\eps) \subset G$ such that
\begin{align}\label{eq_intro_eps}
  (1-\varepsilon) \cdot \norm{f}_{L^2}^2
  \leq \sum_{\lambda \in \Lambda} \abs{f(\lambda)}^2
  \leq (1 + \varepsilon) \cdot \|f\|_{L^2}^2
  \qquad \text{for all} \quad f \in \RKHS,
\end{align}
with $\eps > 0$ arbitrary.
To obtain such a set $\Lambda$, we discretize the reproducing identity
of $\RKHS$ by Riemann-like sums as done by many others, including
\cite{feichtinger1989banach1, groechenig1991describing, fuehr2007sampling,
fornasier2005continuous, ali1993continuous, MR2584749, MR3034426, olsen1992note}.
Closest to our approach is \cite{olsen1992note}.

Deviating from \cite{feichtinger1989banach1, groechenig1991describing},
we do not extend the discretization technique to Banach norms
because that would not yield the quantitative consequences for dual systems that we are after.
Furthermore, in contrast to \cite{sun2006nonuniform,MR2584749,MR3034426,MR2442078,MR2390281}
we cannot rely on spectral invariance to obtain such consequences automatically.
The crucial step in our discretization is that we derive \eqref{eq_intro_eps}
while carefully controlling the qualities that are relevant to exploit local spectral invariance.

\subsubsection{Smoothness of the reproducing kernel}

The smoothness we assume on the reproducing kernel $k$ is considerably weaker
than in other works such as \cite{MR3034426,MR2584749,fornasier2005continuous},
since it only involves the \emph{(squared) absolute values} of the elements of $\RKHS$%
---see Section~\ref{sub:RKHS}, kernel condition (WUC).
Bargmann-Fock spaces of analytic functions on the plane are a case in point where this is relevant,
since classical Bergman bounds provide estimates only for the absolute values
of the functions in those RKHS; see Example~\ref{ex_fock}.
Stronger estimates, as required by many other works on RKHS, seem to be unavailable.
Similarly, since we only impose such weak smoothness assumptions on reproducing kernels,
our results are applicable to projective representations, where the presence of a cocycle
may render stronger smoothness assumptions inadequate---see Remark~\ref{rem_pro}.

\subsubsection{Frequency covers}

Certain representations of affine-type groups $G$ with general dilations
induce covers of the Fourier domain and the associated coorbit spaces can be identified
with so-called Besov-type spaces \cite{tr78, MR3345605, fuehr2019coorbit}.
In those cases, we can compare the present results to our recent work on
lattice expansions and frequency covers \cite{rovova19}.
In contrast to \cite{rovova19}, the present results apply
to general translation nodes without algebraic structure, have consequences
not only for Besov-type spaces but also for Triebel-Lizorkin-type spaces \cite{frazier1991littlewood},
and provide quantitative information on dual frames.
On the other hand, the results in \cite{rovova19} pertain to general frequency covers,
not necessarily related to a group of dilations, and describe the success
of the so-called Walnut-Daubechies criterion, about which this article makes no claim.

\subsubsection{Discretizing without off-diagonal decay}

The off-diagonal decay of the reproducing kernel \eqref{eq_od_intro}
is an essential assumption for our results, which establish the validity of comparable decay properties
for discrete expansions (and, indeed, we show in Remark~\ref{rem:KernelDecayNecessary}
that \eqref{eq_od_intro} is necessary for our results to hold).
In the absence of off-diagonal decay, even the existence of discrete expansions,
without claims on the quality of the dual systems, is non-trivial,
and was recently established in \cite{MR3415581,freeman2019discretization}; see also \cite{MR3935697}.
In the same spirit, the existence of discrete frames in the orbit of a possibly
\emph{non}-square-integrable representation (in particular, \emph{non}-integrable representation)
of a solvable Lie group has recently been proved in \cite{oussa2018frames, oussa2019compactly},
by means of a specific construction.

\subsection{Structure of the article}
\label{sub:PaperStructure}

Section~\ref{sec:Preliminaries} provides background on frames and Riesz sequences,
and weights on groups; furthermore, it introduces the class of envelopes that we use
to define convolution-dominated operators.
Section~\ref{sec:MoleculesInRKHS} introduces reproducing kernel Hilbert spaces,
systems of molecules, and the precise assumptions on the reproducing kernel required for our results.
Section~\ref{sec:ConvolutionDominatedOperators} contains the results on local spectral invariance.
These are subsequently put to use in Sections~\ref{sec:FramesWithDualMolecules}
and \ref{sec:RieszSequencesWithDualMolecules}, where the existence of
dual frames or biorthogonal systems consisting of molecules is proved.
Specifically, Theorems~\ref{thm:frame_main1_intro}, \ref{thm:frame_main2_intro},
and \ref{thm:exist1_intro} are proved in Section~\ref{sec_proofs}, while
Theorem~\ref{thm:riesz_main2_intro}, restated as Theorem~\ref{thm:riesz_main},
is proved in Section \ref{sub:RieszSequenceDualMolecules}.
In Section~\ref{sec_examples_a} we provide examples and explain how our results improve on classical coorbit theory.
Several technical results are postponed to the appendix.

\section{Preliminaries}
\label{sec:Preliminaries}

\subsection{Notation}
\label{sub:Notation}

Let $G$ be a $\sigma$-compact locally compact group with a left Haar measure $\mu_G$.
Denote by $\Delta : G \to \R^+$ the modular function on $G$,
where $\R^+ := (0,\infty)$.
For $z \in \CC$ and $r > 0$, we write $B_r (z) := \{ w \in \CC \colon |w - z| < r \}$.
We also use the notation $\T := \{ z \in \CC \colon |z| = 1 \}$.

Throughout the paper, the set $Q \subset G$ will be a fixed symmetric, open, relatively compact
neighborhood of the identity $e \in G$.

The left and right translate by $y \in G$ of a function $f : G \to \CC$ are defined by
$L_y f = f(y^{-1} \cdot)$, respectively $R_y f = f( \cdot \, y)$.
The involution of $f$ is defined by $f^{\vee} (x) = f(x^{-1})$ for $x \in G$.
The characteristic function of a set $X$ is denoted by $\mathds{1}_{X}$.

Given an index set $I$ and associated points $\lambda_i \in G$,
we refer to a collection $\Lambda = (\lambda_i)_{i \in I}$ as a \emph{family} in $G$.
Here, in contrast to a set, we allow for repetitions or multiplicities.
For notational simplicity, we will most of the time pretend that $\Lambda$ is a set;
for instance, with the notation $(g_\lambda)_{\lambda \in \Lambda}$, we actually mean
$(g_i)_{i \in I}$, i.e., $g_\lambda$ can depend on the index $i \in I$ satisfying $\lambda = \lambda_i$.
Furthermore, we use notations like $\sum_{\lambda \in \Lambda} c_\lambda$,
$\bigcup_{\lambda \in \Lambda} M_\lambda$, and $\# (\Lambda \cap M)$,
which have to be interpreted as $\sum_{i \in I} c_{\lambda_i}$,
$\bigcup_{i \in I} M_{\lambda_i}$, and $\# \{ i \in I \colon \lambda_i \in M \}$,
respectively.
For a family of sets $(M_i)_{i \in I}$, we use the notation $M = \bigcupdot_{i \in I} M_i$
to express that $M = \bigcup_{i \in I} M_i$ and furthermore $M_i \cap M_j = \emptyset$ for $i \neq j$.

\subsection{Discrete sets and covers}
\label{sub:DiscreteSets}

Let $\Lambda$ be family in $G$ and let $U \subset G$ be a unit neighborhood.
The family $\Lambda$ is called \emph{relatively separated} in $G$ if
\begin{equation}
  \rel (\Lambda)
  := \sup_{x \in G}
       \# \big( \Lambda \cap x Q\big)
   = \sup_{x \in G}
       \sum_{\lambda \in \Lambda}
         \mathds{1}_{x Q} (\lambda)
   = \sup_{x \in G}
       \sum_{\lambda \in \Lambda}
         \mathds{1}_{\lambda Q} (x)
   < \infty.
   \label{eq:RelativeSeparation}
\end{equation}
The family $\Lambda$ is called \emph{$U$-dense}
if $G = \bigcup_{\lambda \in \Lambda} \lambda U$.
For a relatively separated and $U$-dense $\Lambda$ in $G$,
a family $(U_{\lambda} )_{\lambda \in \Lambda}$ of Borel sets $U_{\lambda} \subset G$
satisfying $U_{\lambda} \subset \lambda U$ and $G = \bigcupdot_{\lambda \in \Lambda} U_{\lambda}$
is called a \emph{disjoint cover associated to $\Lambda$ and $U$}.
Covers of this type always exist:

\begin{lemma}\label{lem:relativelyUdense_disjointunion}
  Let $U \subset G$ be a relatively compact unit neighborhood.
  Suppose that $\Lambda$ is relatively separated and $U$-dense in $G$.
  Then there exists a countable disjoint cover $(U_{\lambda})_{\lambda \in \Lambda}$
  associated to $\Lambda$ and $U$.
\end{lemma}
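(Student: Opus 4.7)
The plan is a two-step construction: first establish that the index set of $\Lambda$ is countable, then disjointify the translates $(\lambda_i U)_{i \in I}$ via a standard enumeration trick.

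For countability, I will exploit $\sigma$-compactness of $G$. Writing $G = \bigcup_{n \in \N} K_n$ with each $K_n$ compact, each $K_n$ admits a finite cover by translates $x_{n,1} Q, \ldots, x_{n,m_n} Q$ of the open relatively compact set $Q$. Relative separation~\eqref{eq:RelativeSeparation} then forces
\[
  \# \{ i \in I : \lambda_i \in x_{n,j} Q \} \leq \rel(\Lambda) < \infty
\]
for each such translate, so $\{ i \in I : \lambda_i \in K_n \}$ is finite. Taking the union over $n$ shows that $I$ is countable, and I enumerate it as $i_1, i_2, \ldots$.

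For the disjointification, I set $V_k := \lambda_{i_k} U$, noting that each $V_k$ is Borel because $U$ is a Borel unit neighborhood (the standard convention) and left translation is a homeomorphism. The $U$-density hypothesis yields $G = \bigcup_k V_k$. The standard recursive construction
\[
  U_{i_1} := V_1,
  \qquad
  U_{i_k} := V_k \setminus \bigcup_{j < k} V_j \quad (k \geq 2),
\]
delivers a countable collection of Borel sets. Pairwise disjointness is built in, the containment $U_{i_k} \subset V_k = \lambda_{i_k} U$ is immediate, and for any $x \in G$ the smallest $k$ with $x \in V_k$ satisfies $x \in U_{i_k}$, so $G = \bigcupdot_k U_{i_k}$.

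The only slightly delicate point is that $\Lambda$ is an indexed family rather than a set, so distinct indices may represent the same point of $G$; in that case the enumeration harmlessly assigns $\emptyset$ to all but the earliest such index, which is compatible with the required inclusion $\emptyset \subset \lambda U$. Beyond this bookkeeping, no genuine obstacle arises: the argument reduces to combining $\sigma$-compactness with the relative-separation estimate to obtain countability, after which a textbook disjointification finishes the job.
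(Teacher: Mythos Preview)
Your proposal is correct and follows essentially the same approach as the paper's proof: both use $\sigma$-compactness together with relative separation to obtain a countable enumeration of $\Lambda$, and then apply the standard recursive disjointification $U_{\lambda_{i_k}} := \lambda_{i_k} U \setminus \bigcup_{j<k} \lambda_{i_j} U$. The only difference is that you spell out the countability argument explicitly (finite cover of each compact $K_n$ by translates of $Q$, then invoke \eqref{eq:RelativeSeparation}), whereas the paper simply asserts the existence of the enumeration.
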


\begin{proof}
  Since $G$ is $\sigma$-compact and $\Lambda$ is relatively separated,
  there exists an enumeration $(\lambda_n)_{n \in \N}$ of $\Lambda$
  (where we allow $\lambda_n = \lambda_m$ even if $n \neq m$).
  For $n \in \N$, define $U_{\lambda_n} := \lambda_n U \setminus \bigcup_{m=1}^{n-1} \lambda_{m} U$.
  Then $(U_{\lambda_n} )_{n \in \N}$ is a countable family satisfying the desired properties.
\end{proof}

A family $\Lambda$ in $G$ is called \emph{$U$-separated} in $G$
if $\lambda U \cap \lambda' U = \emptyset$ for all $\lambda, \lambda' \in \Lambda$
with $\lambda \neq \lambda'$.
The family $\Lambda$ is called \emph{separated} if it is $U$-separated
for some unit neighborhood $U \subset G$.
Any separated set is relatively separated.
For a separated and $U$-dense family, an associated disjoint cover can be chosen satisfying
convenient additional properties; see \cite[Section 3]{fuehr2007sampling}.

\begin{lemma}[{\cite{fuehr2007sampling}}]\label{lem:partition}
  Let $U, V \subset G$ be precompact unit neighborhoods satisfying $V V^{-1} \subset U$.
  Then there exists a $V$-separated and $U$-dense set $\Lambda$ in $G$.
  For any $V$-separated and $U$-dense family $\Lambda$, there exists a family $(U_{\lambda} )_{\lambda \in \Lambda}$
  such that $G = \bigcupdot_{\lambda \in \Lambda} U_{\lambda}$ as a disjoint union
  and moreover the sets $U_{\lambda} \subset G$ are relatively compact Borel sets
  satisfying $\lambda V \subset U_{\lambda} \subset \lambda U$ for all $\lambda \in \Lambda$.
\end{lemma}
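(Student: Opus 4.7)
The lemma has two essentially independent parts---existence of a $V$-separated and $U$-dense set, and construction of the disjoint Borel cover---so the plan is to handle them separately. The first part will come from a standard maximality argument; the second from a ``disjointification by enumeration'' construction, slightly refined so that the lower inclusion $\lambda V \subset U_\lambda$ survives the process.

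For existence, I would take a maximal $V$-separated family $\Lambda \subset G$, obtained either by Zorn's lemma or by a greedy recursion exploiting $\sigma$-compactness. Given any $x \in G$, either $x \in \Lambda$, or by maximality $\Lambda \cup \{x\}$ fails to be $V$-separated, so some $\lambda \in \Lambda$ satisfies $xV \cap \lambda V \neq \emptyset$. Writing $x v_1 = \lambda v_2$ with $v_1, v_2 \in V$ yields $x \in \lambda V V^{-1} \subset \lambda U$ by the hypothesis $VV^{-1} \subset U$. Hence $G = \bigcup_{\lambda \in \Lambda} \lambda U$.

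For the disjoint cover, I would first check that any $V$-separated family is automatically relatively separated: the disjoint translates $\lambda V \subset xQV$ give a uniform bound $\#(\Lambda \cap xQ) \leq \mu_G(QV)/\mu_G(V) < \infty$, using left-invariance and $\mu_G(V) > 0$. Combined with $\sigma$-compactness, this lets me enumerate $\Lambda = (\lambda_n)_{n \in \N}$. The key intermediate sets are
\begin{equation*}
  B_n \ := \ \lambda_n U \ \setminus \bigcup_{m \neq n} \lambda_m V ,
\end{equation*}
which are Borel and relatively compact, satisfy $\lambda_n V \subset B_n \subset \lambda_n U$ (using $V \subset V V^{-1} \subset U$, since $e \in V$), and are so arranged that $\lambda_n V \cap B_m = \emptyset$ whenever $m \neq n$, because $B_m$ explicitly removes $\lambda_n V$. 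Applying the standard greedy disjointification
\begin{equation*}
  U_{\lambda_n} \ := \ B_n \ \setminus \bigcup_{m < n} B_m
\end{equation*}
then produces pairwise disjoint, relatively compact Borel sets with $\lambda_n V \subset U_{\lambda_n} \subset \lambda_n U$, while the $U$-density of $\Lambda$ gives $\bigcup_n U_{\lambda_n} = \bigcup_n B_n = G$.

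The only genuine subtlety is preserving the lower inclusion $\lambda_n V \subset U_{\lambda_n}$ after the disjointification, since the naive choice $\lambda_n U \setminus \bigcup_{m<n} \lambda_m U$ would in general erode $\lambda_n V$. That is exactly what the intermediate sets $B_n$ are engineered for: they already exclude $\lambda_k V$ for every $k \neq n$, so subtracting any $B_m$ with $m < n$ cannot touch $\lambda_n V$, and the inclusion survives. This trick---which is the one place where the hypothesis $VV^{-1} \subset U$ is used beyond the existence half---is the main (and mild) obstacle; everything else is bookkeeping.
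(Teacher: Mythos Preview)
Your proof is correct. The paper does not actually prove this lemma---it is stated with a citation to \cite{fuehr2007sampling} and no proof is given---so there is no ``paper's own proof'' to compare against here. Your argument is the standard one and is essentially what one finds in the cited reference: a maximal $V$-separated set is automatically $U$-dense via $VV^{-1} \subset U$, and the two-stage disjointification (first carve out the protected cores $\lambda_m V$ to form the $B_n$, then greedily disjointify the $B_n$) is exactly the device needed to retain the lower inclusion $\lambda_n V \subset U_{\lambda_n}$, which the naive construction in Lemma~\ref{lem:relativelyUdense_disjointunion} would not deliver. The verification that $\bigcup_n B_n = G$ (split into the cases $x \in \bigcup_m \lambda_m V$ and $x \notin \bigcup_m \lambda_m V$) is the only point you leave slightly implicit, but it is immediate.
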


Lastly, we formally define the notion of the \emph{uniformity} of a family $\Lambda$.

\begin{definition}\label{def:Uniformity}
  Let $U \subset G$ be a relatively compact unit neighborhood.
  Let $\Lambda$ be relatively separated and $U$-dense in $G$.
  The \emph{$U$-uniformity} $\uniformity(\Lambda;U)$ of $\Lambda$ is defined as
  \[
    \uniformity(\Lambda;U)
    := \inf \bigg\{
             \sup_{\lambda, \lambda' \in \Lambda}
               \frac{\mu_G (U_\lambda)}{\mu_G(U_{\lambda'})}
             \,\colon \;
             (U_{\lambda})_{\lambda \in \Lambda} \; \text{disjoint cover associated to $\Lambda$ and $U$}
           \bigg\}
    \in [1,\infty] .
  \]
\end{definition}

Any lattice subgroup $\Lambda \subset G$ with Borel fundamental domain $U \subset G$
satisfies $\uniformity (\Lambda; U) = 1$.
More generally, any \emph{quasi-lattice} $\Lambda$, that is, any $U$-dense set
satisfying $\mu_G(\lambda U \cap \lambda' U) = 0$ for $\lambda,\lambda' \in \Lambda$
with $\lambda \neq \lambda'$, also satisfies $\uniformity (\Lambda; U) = 1$.
In contrast to lattices, quasi-lattices can exist in a non-unimodular group;
see for instance \cite[Proposition~5.10]{fuehr2007sampling}.

\subsection{Envelope classes}
\label{sub:Envelopes}

The \emph{left-} and \emph{right maximal functions}
of an $f \in L^{\infty}_{\loc} (G)$ are defined by
\(
  \maxL f(x) = \esssup_{y \in x Q} |f(y)|
\)
and
\(
  \maxR f (x) = \esssup_{y \in Qx} |f(y)|,
\)
respectively.
The functions $\maxL f$ and $\maxR f$ are Borel measurable (resp.~continuous)
for $f \in L^{\infty}_{\loc} (G)$ (resp.~$f \in C(G)$),
and $|f(x)| \leq \maxL f(x)$ and $|f(x)| \leq \maxR f(x)$ for $\mu_G$-a.e $x \in G$.

A function $w : G \to (0,\infty)$ will be called an \emph{admissible weight},
if $w$ is measurable and submultiplicative,
meaning that ${w(xy) \leq w(x) \, w(y)}$ for all $x,y \in G$,
and if furthermore $w \geq 1$.
Given such a weight, we define $\| f \|_{L_w^1} := \| w \cdot f \|_{L^1} \in [0,\infty]$
for any measurable function $f : G \to \CC$, and we write $f \in L_w^1 (G)$
if and only if $\| f \|_{L_w^1} < \infty$.
The \emph{(two-sided) amalgam space} $\Wstw(G)$ is defined as
\[
  \Wstw(G) := \big\{ f \in C (G) \; : \; \maxR \maxL f \in L^1_w (G) \big\}
\]
and equipped with the norm
$\| f \|_{\Wstw} := \| \maxR \maxL f \|_{L^1_w} = \| \maxL \maxR f \|_{L^1_w}$.
For technical reasons, we also use
amalgam spaces that are only invariant under left
or right translations and consist of merely measurable functions, namely
\[
  \WLw(G) := \big\{ f \in L^{\infty}_{\loc} (G) : \maxL f \in L^1_w (G) \big\}
  \quad \text{and} \quad
  \WRw(G) := \big\{ f \in L^{\infty}_{\loc} (G) : \maxR f \in L^1_w (G) \big\} .
\]
Equipped with the norms $\| f \|_{\WLw} := \| \maxL f \|_{L^1_w}$
and $\| f \|_{\WRw} := \| \maxR f \|_{L^1_w}$, these spaces are Banach spaces satisfying
$\WLw(G) \hookrightarrow L^1_w (G)$ and $\WRw(G) \hookrightarrow L^1_w (G)$.
Moreover, the space $\WLw$ embeds into $L^{\infty} (G)$
(see Lemma~\ref{lem:LInftyEmbedding}), and $\Wstw(G) \subset \WLw(G) \cap \WRw(G)$.
If $w \equiv 1$, we simply write $\WL$ and $\WR$.

We now collect several estimates.
First, for $f_1 \in \WRw(G)$ and $f_2 \in \WLw(G)$, it holds that
\[
  \maxL (f_1 \ast f_2)(x) \leq (|f_1| \ast \maxL f_2)(x)
  \quad \text{and} \quad
  \maxR (f_1 \ast f_2 )(x) \leq (\maxR f_1 \ast |f_2|)(x)
\]
for $\mu_G$-a.e.~$x \in G$.
Combining this with the fact that
$\| f_1 \ast f_2 \|_{L_w^1} \leq \| f_1 \|_{L_w^1} \cdot \| f_2 \|_{L_w^1}$
and $L^1 (G) \ast L^{\infty} (G) \subset C(G)$,
we obtain the convolution relation
\begin{align}\label{eq:amalgam_convolution}
  \WRw(G) \ast \WLw(G) \hookrightarrow \WstCw(G)
  \quad \text{with} \quad
  \| f_1 \ast f_2 \|_{\WstCw} \leq \|f_1 \|_{\WRw} \, \|f_2 \|_{\WLw}.
\end{align}

The following estimates will be used repeatedly.
The proof is deferred to the Appendix~\ref{sub:AmalgamAppendix}.

\begin{lemma}\label{lem:SynthesisOperatorQuantitativeBound}
  If $\Phi, \Psi : G \to [0,\infty)$ are continuous
  and $\Lambda$ is relatively separated in $G$, then
  \begin{align}\label{eq:StandardEstimateOne}
    &\sum_{\lambda \in \Lambda}
      \Phi(\lambda^{-1} x) \, \Psi(y^{-1} \lambda)
    \leq \frac{\rel(\Lambda)}{\mu_G (Q)}
         \cdot \big(
                 \maxL \Psi \ast \maxR \Phi
               \big)
               (y^{-1} x)
    \qquad \text{for all} \quad x,y \in G,
    \\
    &\sum_{\lambda \in \Lambda}
      \Psi(y^{-1} \lambda)
      \leq \frac{\rel(\Lambda)}{\mu_G (Q)}
         \, \| \Psi \|_{\WL}
    \quad \text{for all } y \in G.
    \label{eq:StandardEstimateTwo}
  \end{align}
  Finally, if $\Theta \in L^1 (G)$ is continuous and satisfies $\Theta^{\vee} \in \WL(G)$,
  then the operator
  \[
    D_{\Theta,\Lambda} :
    \ell^2(\Lambda) \to L^2(G), \quad
    (c_\lambda)_{\lambda \in \Lambda} \mapsto \sum_{\lambda \in \Lambda}
                                                c_\lambda \, L_\lambda \Theta
  \]
  is well-defined and bounded,
  with
  \(
    \| D_{\Theta,\Lambda} \|^2_{\ell^2 \to L^2}
    \leq \frac{\rel(\Lambda)}{\mu_G (Q)} \, \| \Theta \|_{L^1} \, \| \Theta^\vee \|_{\WL}
    .
  \)
  The defining series is absolutely convergent $\mu_G$-a.e.~on $G$.
\end{lemma}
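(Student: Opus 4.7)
My plan is to prove all three assertions by a single unifying device: replace each discrete value $\Phi(\lambda^{-1}x)$ or $\Psi(y^{-1}\lambda)$ by its left or right maximal majorant over a Haar-measure-$\mu_G(Q)$ piece around $\lambda$, sum, and then use the relative separation bound $\sum_\lambda \mathds{1}_{\lambda Q}(z)\le\rel(\Lambda)$ to pass from a sum over $\Lambda$ to an integral over $G$. The main obstacle is purely bookkeeping --- correctly locating the translates to get a genuine convolution after a substitution --- but there is no analytic difficulty since $Q$ is symmetric and $\mu_G$ is left invariant.

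For \eqref{eq:StandardEstimateOne}, I would fix $\lambda\in\Lambda$ and observe that for every $z\in\lambda Q$ there is $q\in Q$ with $z=\lambda q$, so $\lambda^{-1}x=qz^{-1}x\in Q\,z^{-1}x$ and $y^{-1}\lambda=y^{-1}zq^{-1}\in y^{-1}z\,Q$ (using $Q=Q^{-1}$). These inclusions give the pointwise dominations
\[
  \Phi(\lambda^{-1}x)\le \maxR\Phi(z^{-1}x),\qquad \Psi(y^{-1}\lambda)\le \maxL\Psi(y^{-1}z).
\]
Averaging the product over $z\in\lambda Q$ with respect to $\mu_G$ yields
\[
  \mu_G(Q)\,\Phi(\lambda^{-1}x)\,\Psi(y^{-1}\lambda)\le\int_{\lambda Q}\maxR\Phi(z^{-1}x)\,\maxL\Psi(y^{-1}z)\,d\mu_G(z).
\]
Summing over $\lambda$ and using $\sum_\lambda\mathds{1}_{\lambda Q}(z)\le\rel(\Lambda)$, the right-hand side is bounded by $\rel(\Lambda)\int_G\maxL\Psi(y^{-1}z)\maxR\Phi(z^{-1}x)\,d\mu_G(z)$. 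The left-invariant substitution $z\mapsto yz$ identifies this integral as $(\maxL\Psi\ast\maxR\Phi)(y^{-1}x)$, giving \eqref{eq:StandardEstimateOne}. The second estimate \eqref{eq:StandardEstimateTwo} follows from the exact same averaging argument applied only to $\Psi$, finishing by $\int_G\maxL\Psi(y^{-1}z)\,d\mu_G(z)=\|\maxL\Psi\|_{L^1}=\|\Psi\|_{\WL}$ via left invariance.

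For the boundedness of $D_{\Theta,\Lambda}$, my plan is a standard Schur-test style argument. Using $\Theta(\lambda^{-1}x)=\Theta^\vee(x^{-1}\lambda)$, the Cauchy--Schwarz inequality gives
\[
  \bigl|(D_{\Theta,\Lambda}c)(x)\bigr|^2
  \le\Bigl(\sum_{\lambda\in\Lambda}|\Theta^\vee(x^{-1}\lambda)|\Bigr)\Bigl(\sum_{\lambda\in\Lambda}|c_\lambda|^2\,|\Theta(\lambda^{-1}x)|\Bigr).
\]
By \eqref{eq:StandardEstimateTwo} applied to $\Theta^\vee$ (continuous since $\Theta$ is) with $y=x$, the first factor is bounded by $\rel(\Lambda)\mu_G(Q)^{-1}\|\Theta^\vee\|_{\WL}$ uniformly in $x$. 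Integrating in $x$, interchanging sum and integral (by Tonelli), and using $\int_G|\Theta(\lambda^{-1}x)|\,d\mu_G(x)=\|\Theta\|_{L^1}$ (left invariance) produces the claimed operator-norm bound. Finally, applying the same estimate with $c$ replaced by $(|c_\lambda|)_\lambda$ shows that the partial sums of $\sum_\lambda|c_\lambda\,L_\lambda\Theta|$ have uniformly bounded $L^2$-norm, so monotone convergence forces the majorizing series to be finite $\mu_G$-a.e., which is the asserted absolute convergence.
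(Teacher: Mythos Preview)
Your proposal is correct and follows essentially the same approach as the paper: the averaging over $z\in\lambda Q$ to replace $\Phi(\lambda^{-1}x)$ and $\Psi(y^{-1}\lambda)$ by $\maxR\Phi(z^{-1}x)$ and $\maxL\Psi(y^{-1}z)$, the use of $\sum_\lambda\mathds{1}_{\lambda Q}\le\rel(\Lambda)$ to pass to an integral, and the Cauchy--Schwarz/Schur argument for $D_{\Theta,\Lambda}$ (splitting $|\Theta(\lambda^{-1}x)|=|\Theta(\lambda^{-1}x)|^{1/2}|\Theta(\lambda^{-1}x)|^{1/2}$ and bounding one factor via \eqref{eq:StandardEstimateTwo} applied to $|\Theta^\vee|$) are exactly what the paper does. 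The only cosmetic difference is that the paper writes the inclusions as $\lambda^{-1}x=\lambda^{-1}z\,z^{-1}x$ rather than introducing the letter $q$, and handles the a.e.\ absolute convergence implicitly by showing the majorizing series lies in $L^2$ rather than invoking monotone convergence explicitly.
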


For more on amalgam spaces, including the two-sided version, see
\cite{holland1975harmonic, fournier1985amalgams, feichtinger1983banach, FelixPhD, romero2012characterization}.

\subsection{Frames and Riesz sequences}
\label{sub:FramesAndRieszSequences}

Let $\Hil$ be a separable Hilbert space.

A countable family $(g_{\lambda} )_{\lambda \in \Lambda}$ of vectors $g_{\lambda} \in \Hil$
is called a \emph{frame} for $\Hil$ if there exist constants $A, B > 0$,
called \emph{frame bounds}, such that
\begin{align} \label{eq:frame_ineq}
  A \, \| f \|_{\Hil}^2
  \leq \sum_{\lambda \in \Lambda}
         |\langle f, g_{\lambda} \rangle|^2
  \leq B \, \| f \|_{\Hil}^2
  \qquad \text{for all} \quad f \in \Hil.
\end{align}
A family $(g_{\lambda} )_{\lambda \in \Lambda}$ satisfying the upper frame bound
in \eqref{eq:frame_ineq} is called a \emph{Bessel sequence} in $\mathcal{H}$.
If $(g_{\lambda})_{\lambda \in \Lambda}$ is a Bessel sequence with bound $B>0$,
then the associated \emph{coefficient operator}
\[
  \analysis : \Hil \to \ell^2 (\Lambda), \quad
              f \mapsto \big( \langle f, g_{\lambda} \rangle \big)_{\lambda \in \Lambda}
\]
is well-defined and bounded, with $\| \analysis \|_{\Hil \to \ell^2 (\Lambda)} \leq B^{1/2}$.
Equivalently, the associated \emph{reconstruction operator}
\[
  \synthesis
  := \analysis^* : \ell^2 (\Lambda) \to \Hil, \quad
                   (c_{\lambda})_{\lambda \in \Lambda} \mapsto \sum_{\lambda \in \Lambda}
                                                                 c_{\lambda} \, g_{\lambda}
\]
is well-defined and bounded.
The \emph{Gramian} and the \emph{frame operator} of $(g_{\lambda} )_{\lambda \in \Lambda}$ are
the operators $\gramian := \analysis \circ \synthesis : \ell^2 (\Lambda) \to \ell^2 (\Lambda)$
and ${\frameop := \synthesis \circ \analysis : \Hil \to \Hil}$, respectively.
A system $(g_{\lambda})_{\lambda \in \Lambda}$ forms a frame for $\Hil$ if and only if
the frame operator $\frameop : \Hil \to \Hil$ is bounded and invertible.

Two Bessel sequences $(g_{\lambda} )_{\lambda \in \Lambda}$ and $(h_{\lambda} )_{\lambda \in \Lambda}$
are called \emph{dual frames} for $\Hil$ if
\[
  f = \sum_{\lambda \in \Lambda}
        \langle f, g_{\lambda} \rangle \, h_{\lambda}
  = \sum_{\lambda \in \Lambda}
      \langle f, h_{\lambda} \rangle \, g_{\lambda}
  \qquad \text{for all} \quad f \in \Hil .
\]
In this case, both $(g_{\lambda} )_{\lambda \in \Lambda}$ and $(h_{\lambda} )_{\lambda \in \Lambda}$
form frames for $\Hil$.
If $(g_{\lambda})_{\lambda \in \Lambda}$ is a frame for $\Hil$ with frame operator $\frameop$,
then $(h_{\lambda})_{\lambda \in \Lambda} = (\frameop^{-1} g_{\lambda} )_{\lambda \in \Lambda}$
forms a dual frame of $(g_{\lambda} )_{\lambda \in \Lambda}$,
called the \emph{canonical dual frame}.

A countable family $(g_{\lambda} )_{\lambda \in \Lambda}$ of vectors $g_{\lambda} \in \Hil$
is called a \emph{Riesz sequence} in $\Hil$ if there exist constants $A, B > 0$,
called \emph{Riesz bounds}, such that
\[
  A \, \| c \|_{\ell^2 (\Lambda)}^2
  \leq \bigg\| \sum_{\lambda \in \Lambda} c_{\lambda} \, g_{\lambda} \bigg\|^2_{\Hil}
  \leq B \, \| c \|_{\ell^2 (\Lambda)}^2
  \qquad \text{for all} \quad c = (c_{\lambda})_{\lambda \in \Lambda} \in \ell^2 (\Lambda) .
\]
For more on frames and Riesz sequences,
the reader is referred to \cite{young2001introduction, christensen2016introduction}.

\section{Molecules in reproducing kernel Hilbert spaces}
\label{sec:MoleculesInRKHS}

\subsection{Reproducing kernel Hilbert spaces}
\label{sub:RKHS}

Let $\RKHS \subspace L^2 (G)$ be a \emph{reproducing kernel Hilbert space (RKHS)};
that is, $\RKHS \subset L^2(G)$ is closed and separable, and for each $x \in G$,
the point evaluation $\RKHS \ni f \mapsto f(x) \in \CC$ is a well-defined bounded linear functional.
By the Riesz representation theorem, this implies that for each $x \in G$
there exists $k_x \in \RKHS$ such that ${f(x) = \langle f, k_x \rangle}$ for all $f \in \RKHS$.
The kernel ${k : G \times G \to \CC, \; (x,y) \mapsto \langle k_y, k_x \rangle}$
is called the \emph{reproducing kernel} of $\RKHS$.
The orthogonal projection $P_{\RKHS} : L^2 (G) \to L^2 (G)$ onto $\RKHS$ is given by
\begin{align} \label{eq:RKHS_repformula}
  P_{\RKHS} f(x)
  = \int_G k(x,y) f(y) \; d\mu_G (y)
  = \langle f, k_x \rangle_{L^2},
\end{align}
where $k_x (y) = k(y,x) = \overline{k(x,y)}$.
Note by separability of $\RKHS$ that $k : G \times G \to \CC$ is measurable, since
\(
  k(x,y)
  = \langle k_y, k_x \rangle
  = \sum_{n=1}^\infty \langle k_y, \phi_n \rangle \langle \phi_n, k_x \rangle
  = \sum_{n=1}^\infty \phi_n(x) \overline{\phi_n(y)} ,
\)
where $(\phi_n)_{n \in \N}$ is an orthonormal basis for $\RKHS$.

Throughout the paper, $k$ will always denote the reproducing kernel of $\RKHS$.
We next list three conditions on the reproducing kernel.
These conditions are \emph{not} assumed to hold throughout the paper;
instead it will be explicitly mentioned in each theorem which conditions are assumed.

\begin{assumption*}
Let $\RKHS \subspace L^2(G)$ be a RKHS with reproducing kernel $k : G \times G \to \CC$.
Let $w: G \to (0,\infty)$ be a fixed admissible weight.
We will consider the following kernel conditions:
\begin{enumerate}
  \item[(\Kone)] \emph{Bounded diagonal:}
                 There exist constants $\lowdiag, \updiag > 0$ such that
                 \begin{align} \label{eq:diagonal_kernel}
                   0 < \lowdiag \leq k(x,x) \leq \updiag < \infty
                   \qquad \text{for all} \quad x \in G .
                 \end{align}

  \item[(\Ktwo)] \emph{Localization:} There exists a non-negative \emph{envelope}
                 $\Theta \in \WstCw (G)$ such that
                 \begin{align}
                   |k(x,y)| \leq \envker (y^{-1} x)
                   \qquad \text{for all} \quad
                   x,y \in G .
                   \label{eq:localization_kernel}
                 \end{align}

  \item[(\Kthree)] \emph{Weak uniform continuity:}
                   There exists a non-negative $\Theta' \in \WstCw(G)$ such that,
                   for all $f \in \RKHS$ and $x,y \in G$:
                   \begin{align} \label{eq:WUC}
                   \big|\,
                     |f(x)|^2 - |f(y)|^2
                   \big|
                   \leq \eta(y^{-1} x) \cdot
                        \int_G
                          |f(z)|^2 \,
                          \Theta'(z^{-1} y)
                        \; d\mu_G (z)
                   \end{align}
                   where $\eta : G \to [0,\infty)$ is a function satisfying
                   $\eta(x) \to 0$ as $x \to e$.
\end{enumerate}
\end{assumption*}

Note that (WUC) concerns the \emph{absolute values} of functions in $\mathcal{K}$.
While Lemma~\ref{lem:SUCImpliesWUC} below gives a simple sufficient condition for (WUC)
involving the oscillation of reproducing kernels, the weaker assumption (WUC) is instrumental
to treat certain spaces of analytic functions; see Example~\ref{ex_fock} below.

The notion of a molecule in a reproducing kernel Hilbert space is defined as follows.

\begin{definition}\label{def:molecule}
  Let $\Lambda$ be relatively separated in $G$,
  let $\RKHS \subspace L^2(G)$ be a RKHS, and let $w : G \to (0,\infty)$ be an admissible weight.
  A family $(g_{\lambda} )_{\lambda \in \Lambda}$ of vectors $g_{\lambda} \in \RKHS$
  is said to be \emph{a system of $w$-molecules}
  if there exists a non-negative $\molenv \in \WstCw(G)$ such that
  \[
    |g_{\lambda} (x)|
    \leq \min \big\{ \molenv (\lambda^{-1} x), \molenv(x^{-1} \lambda) \big\}
    \qquad \text{for all} \quad
    x \in G \text{ and } \lambda \in \Lambda.
  \]
\end{definition}

\subsection{Discussion on kernel conditions}
\label{sub:DiscussionOfStandingHypothesis}

In this subsection we will discuss the conditions (\Kone), (\Ktwo), (\Kthree) and their relations.
The following observation will often be useful.

\begin{lemma}\label{lem:LocalizationImpliesUpDiag}
  If condition (\Ktwo) holds,
  then the upper bound in \eqref{eq:diagonal_kernel} holds,
  with ${\beta := \| \Theta \|_{L^2}^2}$.
\end{lemma}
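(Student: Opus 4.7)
The plan is to unravel the definition of the reproducing kernel to reduce the claim to a direct $L^2$-bound on $k_x$, and then apply condition (\Ktwo) together with left-invariance of $\mu_G$.

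First I would write $k(x,x) = \langle k_x, k_x \rangle_{L^2} = \| k_x \|_{L^2}^2$, using that $k_x \in \RKHS$ represents point evaluation at $x$, and that $k_x(y) = k(y,x)$. Then, applying \eqref{eq:localization_kernel} pointwise,
\begin{equation*}
  \| k_x \|_{L^2}^2
  = \int_G |k(y,x)|^2 \, d\mu_G(y)
  \leq \int_G \Theta(x^{-1} y)^2 \, d\mu_G(y)
  = \int_G \Theta(z)^2 \, d\mu_G(z)
  = \| \Theta \|_{L^2}^2,
\end{equation*}
where in the penultimate step I would substitute $z = x^{-1} y$, which is permissible because $\mu_G$ is left-invariant.

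The only thing left to check is that $\Theta$ actually lies in $L^2(G)$, so that the right-hand side is finite and independent of $x$. This follows from the embeddings recorded in Section~\ref{sub:Envelopes}: since $\Theta \in \WstCw(G) \subset \WLw(G) \cap \WRw(G)$, we have $\Theta \in L^1_w(G) \hookrightarrow L^1(G)$ (as $w \geq 1$) and also $\Theta \in L^\infty(G)$ by the embedding $\WLw(G) \hookrightarrow L^\infty(G)$ (Lemma~\ref{lem:LInftyEmbedding} referenced in the text). Interpolating these two bounds gives $\Theta \in L^2(G)$, and in fact $\| \Theta \|_{L^2}^2 \leq \| \Theta \|_{L^\infty} \cdot \| \Theta \|_{L^1} < \infty$.

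I do not anticipate any real obstacle here: the statement is essentially a one-line consequence of the reproducing property combined with a substitution in the Haar integral. The only subtlety worth spelling out is the justification that $\Theta \in L^2(G)$, which rests on the embedding properties of the envelope class $\WstCw(G)$ rather than on any deeper property of the kernel.
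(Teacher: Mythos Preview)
Your proof is correct and follows exactly the same route as the paper: both compute $k(x,x) = \|k_x\|_{L^2}^2 \leq \|\Theta(x^{-1}\,\cdot\,)\|_{L^2}^2 = \|\Theta\|_{L^2}^2$ via the localization estimate and left-invariance of Haar measure. The paper leaves the fact that $\Theta \in L^2(G)$ implicit, whereas you spell it out via $\WstCw(G) \subset L^1(G) \cap L^\infty(G)$; this extra justification is welcome but not a different approach.
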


\begin{proof}
 For all $x \in G$, we have
 \(
    k(x,x)
    = \langle k_x, k_x \rangle
    = \| k_x \|_{L^2}^2
    \leq \| \Theta(x^{-1} \cdot) \|_{L^2}^2
    =    \| \Theta \|_{L^2}^2
    < \infty.
 \)
\end{proof}

The next lemma provides a condition that might be easier to verify
than condition (\Kthree) in particular examples; see for instance Section~\ref{sec:CoorbitSpaces}.

\begin{lemma}\label{lem:SUCImpliesWUC}
  Let $\RKHS \subspace L^2(G)$ be a RKHS satisfying condition (\Ktwo).
  Suppose that there is a (not necessarily measurable) function $\Gamma : G \times G \to \T$
  such that
  \begin{align} \label{eq:SUC}
    \big\| k_x - \Gamma(x,y) \, k_y \big\|_{L^1} \to 0
    \quad \text{as} \quad y^{-1} x \to e .
  \end{align}
  Then $k : G \times G \to \CC$ satisfies kernel condition (\Kthree).
  In addition, the condition \eqref{eq:SUC} holds if and only if
  $\|k_x - \Gamma(x,y) \, k_y \|_{L^2} \to 0$ as $y^{-1} x \to e$.
\end{lemma}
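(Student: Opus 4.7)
The strategy is to exploit the reproducing formula $f(x) = \langle f, k_x\rangle$ to reduce the pointwise difference $\abs{f(x)}^2 - \abs{f(y)}^2$ to an inner product of $f$ against $k_x - \Gamma(x,y) k_y$, and then to transport the hypothesized $L^1$-smallness of this difference across the estimate.

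Since $\abs{\Gamma(x,y)} = 1$, we have $\abs{f(y)}^2 = \bigl|\langle f, \Gamma(x,y) k_y\rangle\bigr|^2$, so the elementary bound $\bigl|\abs{a}^2 - \abs{b}^2\bigr| \leq \abs{a-b}(\abs{a}+\abs{b})$ yields
\[
  \bigl|\abs{f(x)}^2 - \abs{f(y)}^2\bigr|
  \leq \bigl|\langle f, k_x - \Gamma(x,y) k_y\rangle\bigr| \cdot (\abs{f(x)} + \abs{f(y)}).
\]
I would then bound each factor by a Cauchy--Schwarz split of the form $\abs{h} = \abs{h}^{1/2}\abs{h}^{1/2}$: the inner-product factor by $\norm{k_x - \Gamma k_y}_{L^1}^{1/2} \bigl(\int \abs{f(z)}^2 (\abs{k_x(z)} + \abs{k_y(z)}) \, d\mu_G(z)\bigr)^{1/2}$, and, via the reproducing formula together with (\Ktwo), $\abs{f(x)} + \abs{f(y)} \leq \sqrt{2\norm{\Theta}_{L^1}} \bigl(\int \abs{f(z)}^2 (\Theta(x^{-1}z) + \Theta(y^{-1}z)) \, d\mu_G(z)\bigr)^{1/2}$. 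Multiplying and invoking (\Ktwo) once more gives the key estimate
\[
  \bigl|\abs{f(x)}^2 - \abs{f(y)}^2\bigr|
  \leq \sqrt{2\norm{\Theta}_{L^1}} \cdot \norm{k_x - \Gamma k_y}_{L^1}^{1/2} \cdot \int \abs{f(z)}^2 (\Theta(x^{-1}z) + \Theta(y^{-1}z)) \, d\mu_G(z).
\]
Setting $\eta(g) := \sqrt{2\norm{\Theta}_{L^1}} \cdot \sup\bigl\{\norm{k_x - \Gamma(x,y) k_y}_{L^1}^{1/2} : y^{-1}x = g\bigr\}$ inside a fixed symmetric compact neighborhood $Q$ of $e$ (and extended by a bounded constant outside) guarantees $\eta(g) \to 0$ as $g \to e$ by \eqref{eq:SUC}. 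To bundle the two summands into a single envelope, note that for $y^{-1}x \in Q$ the decomposition $x^{-1}z = (y^{-1}x)^{-1}(y^{-1}z)$ with $(y^{-1}x)^{-1} \in Q$ and the definition of $\maxR$ give $\Theta(x^{-1}z) \leq \maxR \Theta(y^{-1}z)$; hence $\Theta(x^{-1}z) + \Theta(y^{-1}z) \leq 2\maxR\Theta(y^{-1}z) = \Theta'(z^{-1}y)$ with $\Theta'(w) := 2\maxR\Theta(w^{-1})$. Outside $Q$, enlarging $\eta$ and using $\abs{f} \leq \norm{\Theta}_{L^2}\norm{f}_{L^2}$ (from the reproducing formula and (\Ktwo)) makes (\Kthree) hold trivially.

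For the equivalence of $L^1$- and $L^2$-convergence, the key ingredient is the uniform bound $\norm{k_x - \Gamma(x,y) k_y}_\infty \leq 2\norm{\Theta}_\infty$, valid because $\WstCw \hookrightarrow L^\infty$ by Lemma~\ref{lem:LInftyEmbedding}. The direction $L^1 \to 0 \Rightarrow L^2 \to 0$ then follows at once from $\norm{g}_{L^2}^2 \leq \norm{g}_\infty \norm{g}_{L^1}$. For the converse, I would split $\int \abs{g} = \int_{\abs{g}>\delta} \abs{g} + \int_{\abs{g}\leq\delta} \abs{g}$: Chebyshev bounds the first term by $\norm{g}_\infty \cdot \norm{g}_{L^2}^2/\delta^2$, and the pointwise bound $\abs{g(z)} \leq \Theta(x^{-1}z) + \Theta(y^{-1}z)$ combined with left-invariance of $\mu_G$ bounds the second by $2\int \min(2\Theta, \delta) \, d\mu_G$, a quantity independent of $x,y$ that tends to $0$ as $\delta \to 0$ by dominated convergence.

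The principal technical obstacle is verifying that the envelope $\Theta' := 2(\maxR\Theta)^\vee$ really lies in $\WstCw$: this amounts to stability (up to constants) of the envelope class under the involution $f \mapsto f^\vee$ and under iterated applications of $\maxL$ and $\maxR$, which on non-unimodular groups ties the weight $w$ to the modular function $\Delta$ in a way that requires careful bookkeeping rather than a purely formal manipulation.
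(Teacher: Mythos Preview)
Your overall strategy---the factorization $\bigl||f(x)|^2-|f(y)|^2\bigr| \leq |\langle f, k_x-\Gamma(x,y) k_y\rangle|\,(|f(x)|+|f(y)|)$, Cauchy--Schwarz splits on each factor, and absorption of the two $\Theta$-terms into a single maximal function when $y^{-1}x\in Q$---is exactly the paper's.

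The obstacle you single out at the end, however, is not a bookkeeping issue but a genuine gap in your chosen route. For a non-unimodular $G$ and a general admissible weight $w$, the class $\WstCw$ is \emph{not} closed under $f\mapsto f^\vee$: one checks $\maxR\maxL(F^\vee)=(\maxL\maxR F)^\vee$, and $\|F^\vee\|_{L^1_w}=\int_G F(y)\,\Delta(y^{-1})\,w(y^{-1})\,d\mu_G(y)$, so $\Theta':=2(\maxR\Theta)^\vee\in\WstCw$ would require a compatibility between $w$ and $\Delta$ (such as \eqref{eq:ControlWeightSymmetry}) that the lemma does not assume. The paper sidesteps the involution entirely by exploiting the \emph{other} side of (\Ktwo): since $|k_x(z)|=|k(z,x)|=|k(x,z)|\leq\Theta(z^{-1}x)$, one obtains
\[
  |k_x(z)|+|k_y(z)|
  \leq \Theta(z^{-1}x)+\Theta(z^{-1}y)
  \leq 2\,\maxL\Theta(z^{-1}y)
  \quad\text{for } y^{-1}x\in Q,
\]
already in the required form $\Theta'(z^{-1}y)$ with $\Theta'=2\maxL\Theta$. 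That $\maxL\Theta\in\WstCw$ follows at once from the equivalence of amalgam norms for different unit neighborhoods. Making this single change (use $\Theta(z^{-1}x)$ rather than $\Theta(x^{-1}z)$ throughout) removes the obstacle completely and your argument then goes through.

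Your Chebyshev-plus-uniform-tail argument for the direction $L^2\Rightarrow L^1$ is correct and is a clean alternative to the paper's compact-exhaustion argument; the $L^1\Rightarrow L^2$ direction via $\|g\|_{L^2}^2\leq\|g\|_{L^\infty}\|g\|_{L^1}$ is fine as written.
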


\begin{proof}
Let $\Theta \in \WstCw(G)$ be as in the localization estimate \eqref{eq:localization_kernel}.
For $f \in \RKHS$ and $x,y \in G$, we have
\[
  \big| |f(x)|^2 - |f(y)|^2 \big|
  =    \big| |f(x)| - |f(y)| \big| \cdot \big( |f(x)| + |f(y)| \big).
\]
Since we are only interested in the behavior as $y^{-1} x \to e$,
we may assume that $y^{-1} x \in Q$.
Then ${z^{-1} x = z^{-1} y y^{-1} x \in z^{-1} y Q}$, and hence
\begin{align}\label{eq:estimate1}
  |k_x (z)| + |k_y (z)|
  \leq \Theta (z^{-1} x) + \Theta(z^{-1} y)
  \leq 2 \, \maxL \Theta (z^{-1} y)
  \qquad \text{for all} \quad z \in G .
\end{align}
A direct calculation using this estimate, combined with the Cauchy-Schwarz inequality, gives
\begin{align*}
  \big| |f(x)| - |f(y)| \big|
  & = \big| |f(x)| - |\overline{\Gamma(x,y)} \, f(y)| \big|\\
  &\leq \bigg|
       \int_G
         f(z)
         \big( \,
           \overline{k_x (z)} - \overline{\Gamma(x,y) \, k_y (z)}
         \, \big)
       \; d\mu_G (z)
     \bigg| \\
  &\leq \bigg(
          \int_G
            |f(z)|^2
            |k_x (z) - \Gamma(x,y) \, k_y(z)|
          \; d\mu_G (z)
        \bigg)^{1/2} \\
  &   \quad \quad \cdot
      \bigg(
        \int_G
          |k_x(z) - \Gamma(x,y) \, k_y(z)|
        \; d\mu_G (z)
      \bigg)^{1/2} \\
  &\leq \bigg(
          \int_G
            |f(z)|^2 \,
            \cdot 2 \, \maxL \Theta (z^{-1} y)
          \; d\mu_G (z)
        \bigg)^{1/2}
        \| k_x - \Gamma(x,y) \, k_y\|_{L^1}^{1/2} .
\end{align*}
Similarly as above, since $y^{-1} x \in Q = Q^{-1}$, it follows that
${x^{-1} z = (y^{-1} x)^{-1} y^{-1} z \in Q y^{-1} z}$, and hence
$\Theta(x^{-1} z) + \Theta(y^{-1} z) \leq 2 \, \maxR \Theta (y^{-1} z)$.
Combining $|k_x(y)| = |k(y,x)| = |k(x,y)|$ with \eqref{eq:localization_kernel}
and \eqref{eq:estimate1}, this yields
\[
  |k_x (z)| + |k_y (z)|
  \leq 2 \cdot \big[ \maxL \Theta (z^{-1} y) \big]^{1/2}
         \cdot \big[ \maxR \Theta (y^{-1} z) \big]^{1/2} .
\]
Consequently,
\begin{align*}
  |f(x)| + |f(y)|
  & \leq \int_G
           |f(z)|
           \cdot \big( |k_x(z)| + |k_y(z)| \big)
          \; d\mu_G (z) \\
  & \leq 2
         \int_G
           |f(z)| \cdot
           \big[ \maxL \Theta (z^{-1} y) \big]^{1/2}
           \cdot \big[ \maxR \Theta (y^{-1} z) \big]^{1/2}
         \; d\mu_G (z) \\
  & \leq 2 \,
         \bigg(
           \int_G
             |f(z)|^2 \, \maxL \Theta (z^{-1} y)
           \; d \mu_G (z)
         \bigg)^{1/2}
         \bigg(
           \int_G \maxR \Theta (y^{-1} z) \; d \mu_G (z)
         \bigg)^{1/2} \\
  & = 2 \, \| \maxR \Theta \|_{L^1}^{1/2} \cdot
      \bigg(
        \int_G
          |f(z)|^2 \, \maxL \Theta (z^{-1} y)
        \; d \mu_G (z)
      \bigg)^{1/2} .
\end{align*}
Combining the obtained inequalities gives
\begin{align*}
  \big|
    \, |f(x)|^2 - |f(y)|^2
  \big|
  \lesssim \| k_x - \Gamma(x,y) \, k_y\|^{1/2}_{L^1}
           \int_G
             |f(z)|^2 \, \maxL \Theta (z^{-1} y)
           \; d\mu_G (z),
\end{align*}
where the implied constant does not depend on $f,x,y$, but only on $\Theta$.
Because $\Theta \in \WstCw(G)$ implies that $M_Q \Theta \in \WstCw (G)$,
this proves the first part of the lemma.

For the ``in addition'' claim, suppose the kernel $k : G \times G \to \CC$
satisfies property \eqref{eq:SUC}.
By Lemma~\ref{lem:LocalizationImpliesUpDiag}, there exists a constant $\updiag > 0$ such that
$|k_x (z)| = |\langle k_x, k_z \rangle| \leq \| k_x \|_{L^2} \, \| k_z \|_{L^2} \leq \updiag$,
for all $x, z \in G$.
Therefore,
\[
  \| k_x - \Gamma(x,y) \, k_y \|_{L^2}^2
  = \!\! \int_G |k_x (z) - \Gamma(x,y) \, k_y (z)|^2 \, d\mu_G (z)
  \leq 2 \updiag \int_G |k_x (z) - \Gamma(x,y) \, k_y(z)| \, d \mu_G (z),
\]
which easily shows that $\|k_{x} - \Gamma(x,y) \, k_y \|_{L^2} \to 0$ as $y^{-1} x \to e$,
by assumption \eqref{eq:SUC}.

Conversely, let $\eps > 0$ be arbitrary.
Choose a compact $K \subset G$ with
\({
  \int_{G \setminus K}
    \Theta(z)
  \, d\mu_G (z)
  \!\leq\! \frac{\eps}{4} .
}\)
By assumption, there is a compact symmetric unit neighborhood $U \subset Q$
such that
\[
  \| k_x - \Gamma(x,y) \, k_y \|_{L^2}
  \leq \frac{\eps / 2}{1 + \sqrt{\mu_G(Q K)}}
  \leq \frac{\eps / 2}{1 + \sqrt{\mu_G(U K)}}
\]
for all $x,y \in G$ with $y^{-1} x \in U$.
Thus, if $y^{-1} x \in U$, then
it follows by the localization estimate \eqref{eq:localization_kernel}
and the Cauchy-Schwarz inequality that
\begin{align*}
  \| k_x - \Gamma(x,y) \, k_y \|_{L^1}
  & \leq \int_{G \setminus x U K}
           |k(z,x)| + |k(z,y)|
         \, d \mu_G (z) \\
  & \quad \quad \quad + \int_G
              \indicator_{x U K} (z) \cdot
              |k_x (z) - \Gamma(x,y) \, k_y (z)|
            \, d \mu_G (z) \\
  & \leq \int_{G \setminus x U K}
           \envker(x^{-1} z) + \envker(y^{-1} z)
         \, d \mu_G (z)
         + \| \indicator_{x U K} \|_{L^2}
           \cdot \| k_x - \Gamma(x,y) k_y \|_{L^2} \\
  & \leq 2 \int_{G \setminus K} \envker(w) \, d\mu_G (w)
         + \sqrt{\mu_G(x U K)} \cdot \frac{\eps/2}{1 + \sqrt{\mu_G(U K)}}
    \leq \eps ,
\end{align*}
where it was used that $G \setminus U K \subset G \setminus K$
and $G \setminus y^{-1} x UK \subset G \setminus K$ as $x^{-1} y \in U$.
\end{proof}

The following example provides a setting in which condition (\Kthree)
is satisfied, but the uniformity condition \eqref{eq:SUC} might fail.

\begin{examplex}[Weighted Fock spaces of entire functions]\label{ex_fock}
Let $\phi: \CC^n \to \mathbb{R}$ be twice continuously differentiable,
and assume that there exist constants $m, M > 0$ such that
\begin{align*}
  m \, I_n
  \leq \big( \partial_j \overline{\partial_k} \phi(z) \big)_{j,k \in \{1,...,n\}}
  \leq M \, I_n, \qquad z \in \CC^n,
\end{align*}
in the sense of positive definite matrices (in particular, $\phi$ is a so-called plurisubharmonic function).
The \emph{weighted Fock space} of entire functions is
\[
  \mathcal{F}^2_{\phi} (\CC^n)
  := \big\{
       f:\CC^n \to \CC \mbox{ holomorphic}
       \; : \;
       f \cdot e^{- \phi} \in L^2 (\CC^n, dm)
     \big\},
\]
where $dm$ denotes the Lebesgue measure on $\CC^n \cong \R^{2n}$.
To fit our context, we renormalize the space as
\(
  \RKHS_{\phi}
  := \big\{
       g = f e^{-\phi}
       \; : \;
       f \in \mathcal{F}^2_{\phi} (\CC^n)
     \big\}
  \subset L^2(\CC^n,dm).
\)
Kernel conditions (\Kone) and (\Kthree) amount to pointwise estimates
for the so-called \emph{Bergman kernel} and can be found in \mbox{\cite[Proposition~9]{lindholm2001sampling}},
\cite[Section~3]{schuster2012toeplitz}, and \cite{delin1998pointwise}.
Smoothness estimates such as the ones in Lemma~\ref{lem:SUCImpliesWUC} may fail;
nevertheless, the kernel condition (\Kthree) is satisfied.
This follows from classical smoothness estimates for the (squared) \emph{absolute values}
of functions $g \in \RKHS_{\phi}$ (weighted Bergman bounds):
\[
  \big|
    \nabla (|f|^2 \, e^{-2\phi} )
  \big| (z)
  \leq C_0 \int_{B_1 (z)}
             |f(w)|^2 \, e^{-2\phi(w)}
           \; dm(w)
  \qquad \text{for} \quad f \in \mathcal{F}^2_{\phi} (\CC^n),
\]
see for instance \cite[Proposition 2.3]{schuster2012toeplitz},
or \cite[Lemma 17]{lindholm2001sampling}.
\end{examplex}

\section{Convolution-dominated operators and systems of molecules}
\label{sec:ConvolutionDominatedOperators}

\subsection{Convolution-dominated integral operators}
\label{sub:ConvolutionDominatedIntegralOperators}

In this section we introduce the class of integral kernels
that will be considered in the remainder.
See \cite{shin2009stability,sun2008wiener,batayneh2016localized,fendler2016on,fendler2010convolution}
for related notions.

\begin{definition}\label{def:AdmissibleKernels}
  Let $\RKHS \subspace L^2 (G)$ be a RKHS and let $w : G \to (0,\infty)$ be an admissible weight.
  A measurable function $H : G \times G \to \CC$ is called \emph{$w$-localized} in $\RKHS$
  if
  \begin{enumerate}[label=(\roman*)]
    \item $H(\cdot, y) \in \RKHS$ for all $y \in G$;
          \vspace{0.1cm}

    \item $\overline{H(x,\cdot)} \in \RKHS$ for all $x \in G$;
          \vspace{0.1cm}

    \item There exists a non-negative \emph{envelope} $\molenv \in \WstCw(G)$ such that
          \begin{align} \label{eq:envelope_w-localized}
            \max \big\{ |H(x,y)|, |H(y,x)| \big\} \leq \molenv (y^{-1} x)
            \qquad \text{for all} \quad x,y \in G .
          \end{align}
  \end{enumerate}
  Given a $w$-localized kernel $H : G \times G \to \CC$, the \emph{adjoint kernel}
  $\widetilde{H} : G \times G \to \CC$ is defined by $\widetilde{H} (x,y) := \overline{H(y,x)}$.
\end{definition}

The following lemma provides some basic properties of localized kernels that will be used in the sequel.
For completeness we provide a proof in Appendix~\ref{sub:LocalizedIntegralKernelsAppendix}.

\begin{lemma}\label{lem:AdmissibleKernelProperties}
  Let $\RKHS \subspace L^2 (G)$ be a RKHS.
  Let $H : G \times G \to \CC$ be a $w$-localized kernel in $\RKHS$
  for some admissible weight $w : G \to (0,\infty)$.
  Then the associated integral operator
  \[
    T_H : L^p(G) \to L^p (G), \quad
    T_H f (x) = \int_G H(x,y) \, f(y) \, d \mu_G (y),
  \]
  is well-defined and bounded for arbitrary $p \in [1,\infty]$,
  with absolute convergence of the defining integral for all $x \in G$.
  Moreover, the following properties hold:
  \begin{enumerate}[label=(\roman*)]
    \item $T_H$ maps $L^2(G)$ into $\RKHS$.

    \item For all $x, y \in G$, it holds that
          \[
            H(x,y) = \langle T_H k_y, k_x \rangle .
          \]
          In particular, this implies
          \begin{equation}
            |H(x,y)| \leq \| T_H \|_{\RKHS \to L^2} \cdot \| k_y \|_{L^2} \cdot \| k_x \|_{L^2},
            \qquad \text{for all} \quad x,y \in G.
            \label{eq:IntegralKernelPointwiseBound}
          \end{equation}

    \item The adjoint kernel $\widetilde{H}$ of $H$ is $w$-localized in $\RKHS$.

    \item If $L : G \times G \to \CC$ is $w$-localized in $\RKHS$,
          then so is $H \odot L : G \times G \to \CC$, where
          \[
            \quad
            (H \odot L)(x,y)
            := \int_G H(x,z) \, L(z,y) \, d\mu_G (z)
             = T_H [L(\cdot,y)] (x)
            = \overline{T_{\widetilde{L}} [\overline{H(x,\cdot)}] (y)}
          \]
          for all $x, y \in G$.
          Furthermore, $T_H \circ T_L = T_{H \odot L}$, and
          $\widetilde{H \odot L} = \widetilde{L} \odot \widetilde{H}$.
  \end{enumerate}
\end{lemma}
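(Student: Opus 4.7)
The plan is to first establish boundedness of $T_H$ on every $L^p(G)$ via Schur's test, and then to deploy the two membership hypotheses $H(\cdot,y) \in \RKHS$ and $\overline{H(x,\cdot)} \in \RKHS$ in tandem: the former pins down the range of $T_H$ (property (i)), while the latter, combined with the reproducing identity, yields the kernel-recovery formula (ii). Once (i) and (ii) are in hand, the remaining properties reduce to routine manipulations.

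For the $L^p$ boundedness, the envelope estimate $\max\{|H(x,y)|, |H(y,x)|\} \leq \Phi(y^{-1}x)$ gives both $|H(x,y)| \leq \Phi(y^{-1}x)$ and, by swapping arguments, $|H(x,y)| \leq \Phi(x^{-1}y)$. Left-invariance of $\mu_G$ reduces both Schur integrals $\int |H(x,y)|\,d\mu_G(y)$ and $\int |H(x,y)|\,d\mu_G(x)$ to $\|\Phi\|_{L^1}$, which is finite since $\Phi \in \WstCw(G) \hookrightarrow L^1_w(G) \hookrightarrow L^1(G)$. Schur's test then delivers boundedness on every $L^p(G)$, and the row estimate simultaneously yields absolute convergence of the defining integral at every $x \in G$.

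For (i), I would test $T_H f$ against an arbitrary $g \in \RKHS^\perp$: Fubini rewrites $\langle T_H f, g \rangle$ as $\int f(y)\, \overline{\langle g, H(\cdot,y)\rangle}\, d\mu_G(y)$, which vanishes because $H(\cdot,y) \in \RKHS$. For (ii), the reproducing identity applied to $\overline{H(x,\cdot)} \in \RKHS$ reads $\overline{H(x,y)} = \langle \overline{H(x,\cdot)}, k_y \rangle$; taking complex conjugates produces $H(x,y) = T_H k_y(x)$, and since $T_H k_y \in \RKHS$ by (i), a further appeal to the reproducing property converts $T_H k_y(x)$ into $\langle T_H k_y, k_x \rangle$. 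The pointwise bound \eqref{eq:IntegralKernelPointwiseBound} then falls out of Cauchy--Schwarz.

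For (iii), the envelope bound is symmetric in its two arguments, and the two RKHS memberships for $\widetilde H$ are precisely those for $H$ with conditions (i) and (ii) interchanged. For (iv), substituting $u = x^{-1}z$ in the defining integral of $H \odot L$ and using left-invariance yields $|(H \odot L)(x,y)| \leq (\Phi_H * \Phi_L)(x^{-1}y)$, together with a symmetric estimate in the other direction; the convolution relation \eqref{eq:amalgam_convolution} places this envelope in $\WstCw(G)$, using that $\WstCw \subset \WLw \cap \WRw$. The identities $(H \odot L)(\cdot, y) = T_H[L(\cdot, y)]$ and $\overline{(H \odot L)(x, \cdot)} = T_{\widetilde L}[\,\overline{H(x, \cdot)}\,]$ then produce the two RKHS memberships by applying (i) to $H$ and to $\widetilde L$ (itself $w$-localized by (iii)); the composition identity $T_H \circ T_L = T_{H \odot L}$ and the adjoint identity $\widetilde{H \odot L} = \widetilde L \odot \widetilde H$ are routine Fubini manipulations legitimized by the Schur estimates. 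The main obstacle throughout is organizational: carefully tracking the two envelope directions $\Phi(y^{-1}x)$ and $\Phi(x^{-1}y)$ so that the correct Haar substitution applies, and verifying absolute integrability before each Fubini interchange—no fundamentally new idea beyond the interplay between the two RKHS-membership hypotheses and the reproducing identity is needed.
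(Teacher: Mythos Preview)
Your proposal is correct and follows essentially the same route as the paper: Schur's test for $L^p$-boundedness, testing against $\RKHS^\perp$ with Fubini for (i), the reproducing identity applied to $\overline{H(x,\cdot)}\in\RKHS$ and $T_H k_y\in\RKHS$ for (ii), and the convolution-of-envelopes plus (i)/(iii) for (iv). The paper's write-up is slightly more explicit about the Fubini justifications (spelling out a Cauchy--Schwarz bound on the double integral for (i), and treating $p=1$ and $p=\infty$ separately for $T_H\circ T_L=T_{H\odot L}$), but the architecture is identical.
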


\subsection{Local spectral invariance of integral operators}

The following theorem provides a substitute for the spectral invariance of
CD operators that is still sufficient for our purposes.
(See e.g. \cite[Sections~10.21--10.29]{RudinFA} for background on the holomorphic functional calculus.)

\begin{theorem}\label{thm:integral_eps_inverse}
  Let $\RKHS \subspace L^2 (G)$ be a RKHS satisfying (\Ktwo).
  Given an admissible weight $w : G \to (0,\infty)$,
  let $H : G \times G \to \CC$ be a $w$-localized  kernel in $\RKHS$.
  Denote by $\Theta \in \Wstw(G)$ and $\Phi \in \Wstw(G)$ envelopes for the kernels $K = k$ and $H$,
  respectively.

  For arbitrary $\delta > 0$, there is an $\eps = \eps(\Theta, \Phi, \delta, w) \in (0, \delta)$
  (independent of $H$) such that: If
  \begin{enumerate}
    \item The  operator $T_H : \RKHS \to \RKHS$
          satisfies $\| T_H - \identity_{\RKHS} \|_{\RKHS \to \RKHS} \leq \eps$;
          \vspace{0.1cm}

    \item The function $\phi : B_\delta (1) \to \CC$ is holomorphic;
  \end{enumerate}
  then the operator  $\phi(T_H) : \RKHS \to \RKHS$
  defined through the holomorphic functional calculus
  satisfies $\phi(T_H) = T_{H_\phi}|_{\RKHS}$ for a $w$-localized kernel
  ${H_\phi : G \times G \to \CC}$ in $\RKHS$.
\end{theorem}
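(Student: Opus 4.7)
The plan is to express $\phi(T_H)$ through its Taylor series around $z = 1$ and identify the resulting operator series with a single integral operator whose kernel is $w$-localized. Set $H_0 := H - k$. Since $k$ is $w$-localized by (\Ktwo) and $H$ by hypothesis, $H_0$ is $w$-localized with envelope $\Phi + \Theta \in \Wstw(G)$. Because $k$ is the reproducing kernel, $T_k|_\RKHS = \identity_\RKHS$, so $T_{H_0}|_\RKHS = T_H|_\RKHS - \identity_\RKHS$ has operator norm at most $\eps$ on $\RKHS$. Writing $\phi(z) = \sum_{n=0}^\infty a_n (z-1)^n$ on $B_\delta(1)$, with Cauchy estimates $|a_n| \leq \|\phi\|_{L^\infty(B_r(1))}/r^n$ for every $r < \delta$, the holomorphic functional calculus together with Lemma~\ref{lem:AdmissibleKernelProperties}(iv) gives
\begin{align*}
  \phi(T_H)\big|_\RKHS
  = a_0 \, T_k\big|_\RKHS + \sum_{n=1}^\infty a_n \, T_{H_0^{\odot n}}\big|_\RKHS,
\end{align*}
with operator-norm convergence whenever $\eps < \delta$. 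This reduces the task to showing that the candidate kernel
\begin{align*}
  H_\phi := a_0 \, k + \sum_{n=1}^\infty a_n \, H_0^{\odot n}
\end{align*}
converges pointwise to a $w$-localized kernel whose associated operator recovers $\phi(T_H)|_\RKHS$.

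The crux --- and the step I expect to be the main obstacle --- is a pointwise bound of the form
\begin{align*}
  \bigl| H_0^{\odot n}(x,y) \bigr| \leq c_0 \, s^n \, \Psi(y^{-1} x), \qquad n \geq 2,
\end{align*}
for some \emph{fixed} envelope $\Psi \in \Wstw(G)$ (depending on $\Theta, \Phi, w$, but not on $n$) and some $s < \delta$. Two naive estimates are available, each insufficient on its own: the iterated envelope bound $|H_0^{\odot n}(x,y)| \leq (\Phi + \Theta)^{*n}(y^{-1}x)$, whose $\Wstw$-norm grows like $\|\Phi + \Theta\|_{\Wstw}^n$ and may greatly exceed $\delta^n$ when $G$ lacks polynomial growth; and the uniform bound $|H_0^{\odot n}(x,y)| \leq \eps^n \beta$ obtained from $\|T_{H_0}^n\|_{\RKHS \to \RKHS} \leq \eps^n$ together with $\|k_z\|_{L^2}^2 \leq \beta$, which has the right decay rate but no spatial localization. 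Local spectral invariance is precisely the controlled trade-off of these two bounds.

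The mechanism for the trade-off is to rewrite, for $n \geq 2$,
\begin{align*}
  H_0^{\odot n}(x, y)
  = \bigl\langle T_{H_0}^{n-2} \bigl[ H_0(\cdot, y) \bigr], \; \overline{H_0(x, \cdot)} \bigr\rangle_{L^2(G)},
\end{align*}
which uses the envelope of $H_0$ for the two outer factors (these encode the spatial dependence on $y^{-1}x$) and the operator norm bound $\|T_{H_0}^{n-2}\|_{\RKHS \to \RKHS} \leq \eps^{n-2}$ in the middle. Turning this representation into the desired pointwise estimate requires carefully interpolating between the uniform and envelope bounds --- for instance, by splitting the intervening integrals into a near-diagonal region (bounded via the small operator norm and the RKHS embedding $\RKHS \hookrightarrow L^\infty$) and an off-diagonal region (bounded via the envelope) --- and then choosing $\eps = \eps(\Theta, \Phi, \delta, w)$ small enough that the resulting envelope satisfies $c_0 s^n \Psi$ with $s < \delta$ and $\Psi \in \Wstw(G)$.

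Once this estimate is in hand, the rest is routine. The envelope of $H_\phi$ is dominated by $|a_0| \, \Theta + c_0 \bigl( \sum_{n \geq 1} |a_n| s^n \bigr) \Psi$, which lies in $\Wstw(G)$ since $s < \delta$; membership of $H_\phi(\cdot, y)$ and $\overline{H_\phi(x, \cdot)}$ in $\RKHS$ is inherited from each summand via Lemma~\ref{lem:AdmissibleKernelProperties} and the closedness of $\RKHS$. Finally, dominated convergence justifies interchanging the sum and the integral defining $T_{H_\phi}$, which identifies $T_{H_\phi}|_\RKHS$ with $\phi(T_H)|_\RKHS$, completing the argument.
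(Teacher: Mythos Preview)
Your overall architecture is correct and matches the paper: expand $\phi$ in a Taylor series about $1$, identify $T_H - \identity_{\RKHS} = T_{H_0}$ with $H_0 = H - k$, and show that the kernels $H_0^{\odot n}$ are controlled well enough for $\sum_n a_n H_0^{\odot n}$ to converge to a $w$-localized kernel. You also correctly identify the crux: neither the iterated envelope bound $(\Phi+\Theta)^{*n}$ nor the uniform operator-norm bound $\eps^n \beta$ suffices alone.

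Where your proposal has a gap is in the mechanism for combining the two bounds. Your suggestion --- use the envelope on the outer two factors of $H_0^{\odot n}$ and the operator norm $\eps^{n-2}$ in the middle, then ``split into near-diagonal and off-diagonal regions'' --- is not worked out, and it is not clear it can be made to yield an estimate of the form $c_0 s^n \Psi(y^{-1}x)$ with a \emph{fixed} $\Psi \in \Wstw(G)$. The inner-product representation $H_0^{\odot n}(x,y) = \langle T_{H_0}^{n-2} H_0(\cdot,y), \overline{H_0(x,\cdot)} \rangle$ gives you $\eps^{n-2} \|H_0(\cdot,y)\|_{L^2} \|H_0(x,\cdot)\|_{L^2}$, which has the right decay in $n$ but no dependence on $y^{-1}x$ at all; the regional splitting you allude to would have to recover localization from nothing, and you give no indication how.

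The paper's resolution is both simpler and performed at a different level: the two bounds are combined already at the level of $H_0$ itself, not at the level of $H_0^{\odot n}$. Since $|H_0(x,y)| \leq (\Phi'+\Theta')(y^{-1}x)$ (envelope) and, by Lemma~\ref{lem:AdmissibleKernelProperties}(ii), $|H_0(x,y)| \leq \|T_{H_0}\|_{\RKHS\to\RKHS} \|k_x\|_{L^2} \|k_y\|_{L^2} \leq \eps\beta$ (operator norm), one has
\[
  |H_0(x,y)| \leq \Phi_\eps(y^{-1}x), \qquad \Phi_\eps := \min\{\eps\beta, \; \Phi'+\Theta'\}.
\]
This is the key step: $\Phi_\eps \in \Wstw(G)$ with $\|\Phi_\eps\|_{\Wstw} \to 0$ as $\eps \downarrow 0$ by dominated convergence (since $\Phi_\eps \leq \Phi'+\Theta'$ pointwise and $\Phi_\eps \to 0$). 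Choose $\eps$ so that $\|\Phi_\eps\|_{\Wstw} \leq \delta/4$. Then a straightforward induction gives $|H_0^{\odot n}(x,y)| \leq \Phi_\eps^{*n}(y^{-1}x)$, and the convolution relation~\eqref{eq:amalgam_convolution} yields $\|\Phi_\eps^{*n}\|_{\Wstw} \leq \|\Phi_\eps\|_{\Wstw}^n \leq (\delta/4)^n$. With the Cauchy--Hadamard bound $|a_n| \leq C_\phi (2/\delta)^n$, the series $\sum_{n\geq 1} |a_n| \Phi_\eps^{*n}$ converges in $\Wstw(G)$, and the rest proceeds exactly as you outlined.

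Note that the paper does not obtain an estimate with a \emph{fixed} envelope $\Psi$ and geometric factor $s^n$ as you aimed for; instead the envelopes $\Phi_\eps^{*n}$ depend on $n$, but their $\Wstw$-norms decay geometrically, which is all that is needed for the series to converge in the Banach space $\Wstw(G)$.
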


\begin{proof}
  Let $\delta > 0$ and let $\beta := \| \Theta \|_{L^2}^2$ be the upper bound in \eqref{eq:diagonal_kernel}
  provided by Lemma~\ref{lem:LocalizationImpliesUpDiag}.
  Note that since $H : G \times G \to \CC$ is $w$-localized in $\RKHS$,
  it follows by Lemma~\ref{lem:AdmissibleKernelProperties}
  that $T_H : \RKHS \to \RKHS$ is well-defined and bounded.
  The proof will be split into several steps:

  \smallskip{}

  \textbf{Step 1.} \emph{(Choosing $\eps$)}.
  Define $\envker' , \Phi' : G \to [0,\infty)$ by
  \(
    \envker' (x) := \min \{ \envker(x), \envker(x^{-1}) \}
  \)
  and
  \(
    \Phi' (x) := \min \{ \Phi(x), \Phi(x^{-1}) \} .
  \)
  Since $|k(x,y)| = |k(y,x)|$, the conditions~\eqref{eq:localization_kernel}
  and \eqref{eq:envelope_w-localized} are still satisfied if we replace $\Phi$ and $\envker$
  by $\Phi'$ and $\envker'$, respectively.
  Furthermore, $\Phi', \envker' \in \WstCw (G)$ simply because $0 \leq \Phi' \leq \Phi$
  and $0 \leq \envker' \leq \envker$.

  Let $\Phi_{\eps} := \min \{ \eps \updiag, \envker' + \Phi' \}$ for $\eps > 0$.
  Since
  \({
    \maxR \maxL \Phi_\eps
    \leq \min \{ \eps \updiag, \maxR \maxL \Theta + \maxR \maxL \Phi \},
  }\)
  it follows from the dominated convergence theorem that
  $\| \Phi_{\eps} \|_{\WstCw} \to 0$ as $\eps \downarrow 0$.
  Choose $\eps = \eps(\Phi, \Theta, \beta, \delta, w) \in (0, \frac{\delta}{2})$ such that
  $\| \Phi_{\varepsilon} \|_{\WstCw} \leq \frac{\delta}{4}$.

  \medskip{}

  \textbf{Step 2.} \emph{(Representing $\phi(T_H)$ as a series)}.
  Let $\phi : B_\delta (1) \to \CC$ be holomorphic.
  By assumption, $\| \identity_{\RKHS} - T_H \|_{\RKHS \to \RKHS} \leq \eps < \frac{\delta}{2}$,
  and hence $\sigma(T_H) \subset B_{\delta/2} (1)$.
  This implies that $\phi(T_H) : \RKHS \to \RKHS$ is a well-defined bounded linear operator.

  By expanding $\phi$ into a power series,
  we can write $\phi(z) = \sum_{n=0}^\infty a_n \, (z - 1)^n$ for all ${z \in B_\delta (1)}$,
  for a suitable sequence $(a_n)_{n \in \N_0} \subset \CC$.
  The series representing $\phi$ convergences locally uniformly on $B_\delta (1)$.
  Therefore, elementary properties of the holomorphic functional calculus
  (see \mbox{\cite[Theorem~10.27]{RudinFA}}) show that
  \begin{equation}
    \phi(T_H) = \sum_{n=0}^\infty a_n \, (T_H - \identity_{\RKHS})^n,
    \label{eq:SpectralCalculusSeriesRepresentation}
  \end{equation}
  with convergence in the operator norm. An application of the Cauchy-Hadamard formula
  gives $\delta \leq \big[\, \limsup_{n \to \infty} |a_n|^{1/n} \,\big]^{-1}$.
  Thus, there is some $N = N(\phi,\delta) \in \N$ such that $|a_n|^{1/n} \leq \frac{2}{\delta}$
  for all $n \geq N$.
  Consequently, there is $C_\phi = C_\phi (\delta) > 0$ such that
  \begin{equation}
    |a_n| \leq C_\phi \cdot (2/\delta)^n
    \qquad \text{for all } n \in \N_0
    .
    \label{eq:SpectralCalculusCoefficientBound}
  \end{equation}

  \medskip{}

  \textbf{Step 3.} \emph{(Integral representation of $T_H - \identity_{\RKHS}$)}.
  By the reproducing formula \eqref{eq:RKHS_repformula}, we have
  \[
    (T_K f) (x)
    = \int_G f(y) K(x,y) \; d\mu_G (y)
    = \langle f, k_x \rangle
    = f(x)
    \qquad \text{for} \quad f \in \RKHS \text{ and } x \in G .
  \]
  Hence, $T_K = \identity_{\RKHS}$ and therefore $T_H - \identity_{\RKHS} = T_{H - K}$.

  \medskip{}

  \textbf{Step 4.} \emph{($w$-localization of $K-H$)}.
  Since $\| T_{H - K} \|_{\RKHS \to \RKHS} = \| T_H - \identity_{\RKHS} \| \leq \eps$
  and $\|k_x \|_{L^2} \leq \updiag^{1/2}$
  for all $x \in G$ by \eqref{eq:diagonal_kernel},
  it follows by the point-wise estimate~\eqref{eq:IntegralKernelPointwiseBound} that
  \begin{equation} \label{eq:K-H_estimate1}
    |(H - K)(x,y) |
    \leq \| T_{H - K} \|_{\RKHS \to \RKHS} \|k_y \|_{L^2} \| k_x \|_{L^2}
    \leq \varepsilon \updiag
    \qquad \text{for all } x,y \in G.
  \end{equation}
  On the other hand, as in Step~1,
  \begin{align}\label{eq:K-H_estimate2}
    |(H - K) (x,y)|
    \leq |K(x,y)| + |H(x,y)|
    \leq (\envker' + \Phi')(y^{-1} x)
    \qquad \text{for all } x,y \in G.
  \end{align}
  Combining \eqref{eq:K-H_estimate1} and \eqref{eq:K-H_estimate2}
  yields $|(H - K) (x,y)| \leq \Phi_{\eps} (y^{-1} x)$ for all $x,y \in G$.

  \medskip{}

  \textbf{Step 5.} \emph{(Estimating products of $\Phi_\eps$ and $H - K$)}.
  Define inductively
  \[
    \Phi_{\varepsilon}^{\ast(1)} := \Phi_{\varepsilon}
    \qquad \text{and} \qquad
    \Phi_{\varepsilon}^{\ast(n+1)} := \Phi_{\varepsilon} \ast \Phi_{\varepsilon}^{\ast(n)} ,
  \]
  as well as
  \[
    (H - K)^{\circ(1)} := H - K
    \qquad \text{and} \qquad
    (H - K)^{\circ(n+1)} := (H - K)^{\circ(n)} \odot (H - K),
  \]
  where the operation $\odot$ is as defined in Lemma~\ref{lem:AdmissibleKernelProperties}.
  This lemma also shows for arbitrary $n \in \N$ that
  $(T_H - \identity_{\RKHS})^n = T_{(H - K)^{\circ(n)}}$,
  and that $(H - K)^{\circ(n)}$ is $w$-localized in $\RKHS$,
  which in particular means that
  \begin{equation}
    (H - K)^{\circ(n)} (\cdot, y) \in \RKHS
    \qquad \text{and} \qquad
    \overline{(H - K)^{\circ(n)} (x, \cdot)} \in \RKHS
    \qquad \text{for} \quad x,y \in G .
    \label{eq:IteratedKernelsAreInRKHS}
  \end{equation}
  Also, by associativity of the convolution on $L^1(G)$, it follows by an induction argument that
  $\Phi_\eps^{\ast(n+1)} = \Phi_\eps^{\ast(n)} \ast \Phi_\eps$ for all $n \in \N$.

  Furthermore, by induction on $n \in \N$ one can show that
  \begin{align}\label{eq:induction_convolution}
    \| \Phi_{\varepsilon}^{\ast (n)} \|_{\WstCw}
    \leq \| \Phi_{\varepsilon} \|_{\WstCw} \| \Phi_{\varepsilon} \|^{n-1}_{\WRw}
    \qquad \text{for all } n \in \N .
  \end{align}
  Indeed, the case $n = 1$ is clear.
  For the induction step, we use the convolution relation \eqref{eq:amalgam_convolution}
  to deduce that
  \(
    \| \Phi_{\varepsilon}^{\ast(n+1)} \|_{\WstCw}
    \leq \| \Phi_{\varepsilon} \|_{\WRw} \| \Phi_{\varepsilon}^{\ast(n)} \|_{\WstCw}
    \leq \| \Phi_{\varepsilon} \|_{\WRw}^{(n+1)-1} \| \Phi_{\varepsilon} \|_{\WstCw} .
  \)

  Finally, again by induction on $n \in \N$, it follows that
  \begin{equation}\label{eq:induction_product}
    \max \big\{
           |(H - K)^{\circ (n)} (x, y) |, \,\,
           |(H - K)^{\circ (n)} (y, x) |
         \big\}
    \leq \Phi_{\varepsilon}^{\ast (n)} (y^{-1} x),
    \quad \text{for all } x,y \in G.
  \end{equation}
  Here, the base case $n=1$ follows from Step~4 since $\Phi_\eps (y^{-1} x) = \Phi_\eps (x^{-1} y)$.
  For the induction step, let $x, y \in G$.
  Then a change of variables, combined with the induction hypothesis, shows
  \begin{align*}
    \big| (H - K)^{\circ(n+1)} (x,y) \big|
    & \leq \int_G
             \big| (H - K)^{\circ(n)} (x,z) \big| \cdot \big| (H - K)(z, y) \big|
           \; d\mu_G (z) \\
    &\leq \int_G
            \Phi^{\ast (n)}_{\varepsilon} (z^{-1} x) \,
            \Phi_{\varepsilon} (y^{-1} z)
          \; d\mu_G (z)
    = \int_G
        \Phi_{\varepsilon} (t) \,
        \Phi^{\ast (n)}_{\varepsilon} (t^{-1} y^{-1} x)
      \; d\mu_G (t) \\
    & = \big( \Phi_{\varepsilon} \ast \Phi_{\varepsilon}^{\ast (n)} \big) (y^{-1} x)
      = \Phi_{\eps}^{\ast(n+1)}(y^{-1} x) .
  \end{align*}
  Similarly, we see
  \(
    \big| (H - K)^{\circ(n+1)} (y,x) \big|
     \leq \int_G
             \Phi_\eps^{\ast(n)} (y^{-1} z) \,
             \Phi_\eps (z^{-1} x)
           \; d \mu_G (z)
      =    \Phi_\eps^{\ast(n+1)} (y^{-1} x) .
  \)
  Using these estimates, \eqref{eq:induction_product} follows by induction.

  \medskip{}

  \textbf{Step 6.} \emph{(Construction of the $w$-localized kernel $H_\phi$)}.
  Consider the series $\sum_{n = 1}^{\infty} a_n \, \Phi_{\eps}^{\ast(n)}$.
  Since $\eps \in (0, \frac{\delta}{2})$ was chosen such that
  $\|\Phi_{\eps} \|_{\WstCw} \leq \frac{\delta}{4}$, it follows by \eqref{eq:induction_convolution}
  and \eqref{eq:SpectralCalculusCoefficientBound} that
  \[
    \sum_{n = 1}^{\infty}
      |a_n| \, \| \Phi_{\eps}^{\ast(n)} \|_{\WstCw}
    \leq \sum_{n = 1}^{\infty}
           |a_n| \, \| \Phi_{\varepsilon} \|_{\WstCw}^{n} \\
    \leq C_\phi \sum_{n = 1}^{\infty}
                  \bigg( \frac{2}{\delta} \bigg)^{n}
                  \cdot \bigg( \frac{\delta}{4} \bigg)^{n}
    =    C_\phi ,
  \]
  showing that the series $\sum_{n = 1}^{\infty} \! a_n \, \Phi_{\eps}^{\ast(n)}$
  is norm-convergent in the Banach space ${\WstCw(G) \! \hookrightarrow \! C_b (G)}$.

  Let
  \(
    \widetilde{\Phi}
    := |a_0| \, \Theta '
       + \sum_{n = 1}^{\infty} |a_n| \, \Phi_{\varepsilon}^{\ast(n)}
    \in \WstCw(G)
    .
  \)
  Note that the kernel ${H_\phi : G \!\times\! G \to \CC}$ given by
  \[
    H_\phi (x,y)
    := a_0 \, K(x,y) + \sum_{n = 1}^{\infty} a_n \, (H - K)^{\circ (n)} (x,y)
    \qquad \text{for} \quad x,y \in G
  \]
  is well-defined with the series converging absolutely, and
  \begin{equation}
    \begin{split}
      |H_\phi (x,y)|
      & \leq |a_0| \,\, |K(x,y)|
             + \sum_{n = 1}^{\infty}
                 |a_n| \,\, |(H - K)^{\circ(n)} (x,y)| \\
      & \leq |a_0| \cdot \envker' (y^{-1} x)
             + \sum_{n = 1}^{\infty}
                 |a_n| \, \Phi_{\varepsilon}^{\ast (n)} (y^{-1} x)
        =    \widetilde{\Phi} (y^{-1} x)
    \end{split}
    \label{eq:SpectralCalculusKernelEnvelope}
  \end{equation}
  by \eqref{eq:induction_product}.
  Similar arguments also show that $|H_\phi (y,x)| \leq \widetilde{\Phi} (y^{-1} x)$,
  so that $\widetilde{\Phi} \in \WstCw(G)$ is an envelope for $H_\phi$;
  see Equation~\eqref{eq:envelope_w-localized}.

  To prove that $H_\phi$ is $w$-localized in $\RKHS$, it remains to show
  $H_\phi (\cdot, y) \in \RKHS$ and ${\overline{H_\phi (x, \cdot)} \in \RKHS}$ for all $x,y \in G$.
  To see this, note that $\widetilde{\Phi} \in \WstCw \subset L^1 \cap L^\infty \subset L^2 (G)$.
  In combination with \eqref{eq:SpectralCalculusKernelEnvelope}
  and the dominated convergence theorem, this implies that the series defining
  $H_\phi (\cdot, y)$ converges in $L^2(G)$.
  Since $\RKHS \subset L^2(G)$ is closed and since $K(\cdot, y) \in \RKHS$
  and $(H - K)^{\circ(n)}(\cdot, y) \in \RKHS$ (see Equation~\eqref{eq:IteratedKernelsAreInRKHS}),
  this implies $H_\phi (\cdot, y) \in \RKHS$, as claimed.
  The proof of $\overline{H_\phi (x, \cdot)} \in \RKHS$ is similar.

  \medskip

  \textbf{Step 7.} \emph{(Showing $\phi(T_H) = T_{H_\phi}|_{\RKHS}$)}.
  Let $f \in \RKHS$ and $x \in G$ be arbitrary.
  Then an application of the dominated convergence theorem
  (which is justified by the analogue of Equation~\eqref{eq:SpectralCalculusKernelEnvelope} which
  shows that $\sum_{n=1}^\infty |a_n| \, |(H-K)^{\circ(n)}(x,y)| \leq \widetilde{\Phi}(x^{-1} y)$,
  and because ${\widetilde{\Phi} \in \Wstw(G) \hookrightarrow L^2(G)}$),
  combined with the identity $T_K = \identity_{\RKHS}$, shows that
  \begin{align*}
    T_{H_\phi} f (x)
    & = a_0 \, T_K f(x)
          + \sum_{n = 1}^{\infty}
              a_n \, T_{(H - K)^{\circ (n)}} f(x) \\
    & = a_0 \, \big[ (T_H - \identity_{\RKHS})^0 f \big](x)
        + \sum_{n = 1}^{\infty}
            a_n \, \big[ (T_H - \identity_{\RKHS})^n f \big](x) \\
    & = \bigg(
          \sum_{n = 0}^{\infty}
            a_n \, \big( T_H - \identity_{\RKHS} \big)^n f
        \bigg)(x)
      = \big[ \phi(T_H) f \big] (x) ,
  \end{align*}
  where the last step used Equation~\eqref{eq:SpectralCalculusSeriesRepresentation}.
  Thus, $\phi(T_H) = T_{H_\phi}|_{\RKHS}$.
\end{proof}

\begin{remark}
  The number $\eps = \eps(\Theta, \Phi, \delta, w)$ in Theorem~\ref{thm:integral_eps_inverse}
  depends not only on the norm $\norm{\Phi}_{\Wstw}$ but on the full envelope $\Phi$.
\end{remark}

\subsection{Convolution-dominated matrices}
\label{sub:ConvolutionDominatedMatrixOperators}

In this section we consider  convolution-dominated matrices.
Matrices of similar type have been studied in a variety of settings in the literature;
see for instance
\cite{fendler2007on, fendler2008convolution, shin2009stability, sun2007wiener, tessera2010left}.

\begin{definition}\label{def:GoodMatrices}
Let $\Lambda , \Gamma$ be relatively separated families in $G$,
let $w : G \to (0,\infty)$ be measurable, and let
\(
  M
  = (M_{\lambda,\gamma})_{\lambda \in \Lambda, \gamma \in \Gamma}
  \in \CC^{\Lambda \times \Gamma}
\).
A non-negative function $\Theta \in \WstCw(G)$ is a \emph{$w$-envelope}
for $M$ (written $M \dominated \Theta$) if
\[
  |M_{\lambda,\gamma}|
  \leq \min \big\{ \Theta (\gamma^{-1} \lambda), \quad \Theta (\lambda^{-1} \gamma) \big\},
  \qquad \text{for all} \quad \lambda \in \Lambda \text{ and } \gamma \in \Gamma .
\]
Define the space
\[
  \goodMatrices (\Gamma, \Lambda)
  := \big\{
      M \in \CC^{\Lambda \times \Gamma}
       \colon
       \exists \, \Theta \in \controlFunctionSpace(G) \text{ such that } M \dominated \Theta
     \big\},
\]
and let
\[
  \| M \|_{\goodMatrices}
  := \inf \big\{
            \| \Theta \|_{\controlFunctionNormIndex}
            \; : \;
            M \dominated \Theta \in \controlFunctionSpace(G)
          \big\}
  < \infty
\]
for $M \in \goodMatrices (\Gamma,\Lambda)$.
In case of $\Lambda = \Gamma$, we will simply write
$\goodMatrices(\Lambda) := \goodMatrices(\Lambda,\Lambda)$.
\end{definition}

The following result collects some basic properties of convolution-dominated matrices.
For completeness we provide a proof in Appendix~\ref{sub:convolutionmatrix_appendix}.

\begin{proposition}\label{prop:GoodMatrixSpaceReasonable}
  Let $\Lambda,\Gamma,\Omega$ be relatively separated families in $G$
  and let $w : G \to (0,\infty)$ be an admissible weight.
  The following properties hold:
  \begin{enumerate}[label=(\roman*)]
    \item The pair $\big( \goodMatrices(\Gamma,\Lambda) , \| \cdot \|_{\goodMatrices} \big)$
          is a Banach space.
          \vspace{0.1cm}

    \item Each
          \(
            M
            = (M_{\lambda,\gamma})_{\lambda \in \Lambda, \gamma \in \Gamma}
            \in \goodMatrices(\Gamma,\Lambda)
          \)
          satisfies the following Schur-type conditions:
          \begin{equation}
            \sum_{\lambda \in \Lambda}
              |M_{\lambda,\gamma}|
            \leq \frac{\rel(\Lambda)}{\mu_G(Q)} \, \| M \|_{\goodMatrices}
            \quad \text{and} \quad
            \sum_{\gamma \in \Gamma}
              |M_{\lambda,\gamma}|
            \leq \frac{\rel(\Gamma)}{\mu_G(Q)} \, \| M \|_{\goodMatrices}.
            \label{eq:GoodMatricesAreSchur}
          \end{equation}

    \item The embedding
          \(
            \goodMatrices(\Gamma, \Lambda)
            \hookrightarrow \mathcal{B}(\ell^p(\Gamma), \ell^p (\Lambda))
          \)
          holds for all $1 \leq p \leq \infty$, with
          \begin{equation}
            \qquad \quad
            \| M \|_{\ell^p (\Gamma) \to \ell^p(\Lambda)}
            \leq \frac{ \max \{ \rel(\Lambda), \rel(\Gamma) \}}
                      {\mu_G (Q)}
                 \cdot \| M \|_{\goodMatrices (\Gamma,\Lambda)},
            \quad  \quad M \in \goodMatrices(\Gamma,\Lambda) .
            \label{eq:GoodMatricesActOnEllP}
          \end{equation}

    \item \label{enu:ConvoDominatedMatricesMultiplication}
          If $M \in \goodMatrices(\Gamma,\Omega)$
          and $N \in \goodMatrices (\Lambda,\Gamma)$, then the product
          $M N \in \goodMatrices(\Lambda,\Omega)$, with
          \[
            \| M N \|_{\goodMatrices}
            \leq \frac{2\rel(\Gamma)}{\mu_G(Q)}
                  \| M \|_{\goodMatrices} \,
                  \| N \|_{\goodMatrices} .
          \]
  \end{enumerate}
\end{proposition}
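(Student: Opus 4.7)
The proof treats the four items in order, with (ii) and (iii) following quickly from Lemma~\ref{lem:SynthesisOperatorQuantitativeBound}, (i) requiring a standard Banach-space argument adapted to the envelope structure, and (iv) being the main technical point. Before proceeding, a useful preliminary reduction is envelope \emph{symmetrization}: if $M \dominated \Theta$, then $M \dominated \Theta'$, where $\Theta'(x) := \min\{\Theta(x), \Theta(x^{-1})\}$ is symmetric, lies in $\Wstw(G)$, and satisfies $\|\Theta'\|_{\Wstw} \leq \|\Theta\|_{\Wstw}$. Thus when computing $\|M\|_{\goodMatrices}$ one may restrict to symmetric envelopes, which streamlines several arguments.

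For (i), homogeneity is immediate from $\alpha M \dominated |\alpha| \Theta$, and the triangle inequality follows from $M + N \dominated \Theta + \Psi$ via the elementary inequality $\min\{a,a'\} + \min\{b,b'\} \leq \min\{a+b, a'+b'\}$. Non-degeneracy is obtained by invoking the embedding $\WLw \hookrightarrow L^\infty$ from Lemma~\ref{lem:LInftyEmbedding}: a vanishing norm produces envelopes with vanishing sup-norm, forcing every entry of $M$ to be zero. Completeness goes via the standard telescoping trick: extract a subsequence $(M^{(n_k)})$ with $\|M^{(n_{k+1})} - M^{(n_k)}\|_{\goodMatrices} < 2^{-k}$, pick envelopes $\Theta_k$ with $\|\Theta_k\|_{\Wstw} < 2^{1-k}$, so that $\Theta := \sum_k \Theta_k$ converges in the Banach space $\Wstw(G)$; the pointwise limit $M := \lim_k M^{(n_k)}$ then lies in $\goodMatrices$ with envelope dominated by $\Theta$ plus any envelope of $M^{(n_1)}$, and the tail estimate gives norm convergence.

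For (ii), the pointwise bound $|M_{\lambda,\gamma}| \leq \Theta(\gamma^{-1}\lambda)$ combined with estimate \eqref{eq:StandardEstimateTwo} of Lemma~\ref{lem:SynthesisOperatorQuantitativeBound} (with $y = \gamma$ and $\Psi = \Theta$) yields the first Schur bound in \eqref{eq:GoodMatricesAreSchur}; the companion bound uses the other arm of the min, $|M_{\lambda,\gamma}| \leq \Theta(\lambda^{-1}\gamma)$, symmetrically. Taking the infimum over admissible envelopes, together with $\|\Theta\|_{\WLw} \leq \|\Theta\|_{\Wstw}$, converts these bounds into statements in terms of $\|M\|_{\goodMatrices}$. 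Item (iii) is then the classical Schur test: uniform control over row and column sums delivers $\ell^p \to \ell^p$ boundedness with the stated constant at the endpoints $p \in \{1, \infty\}$, and the general case follows by Hölder's inequality (or, equivalently, Riesz--Thorin interpolation).

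Item (iv) is the crux. Assuming symmetrized envelopes $\Theta$ for $M$ and $\Psi$ for $N$, bound the $(\omega,\lambda)$ entry of $MN$ by $\sum_{\gamma \in \Gamma} \Theta(\gamma^{-1}\omega) \, \Psi(\lambda^{-1}\gamma)$ and apply estimate \eqref{eq:StandardEstimateOne} to obtain
\[
  |(MN)_{\omega,\lambda}|
  \leq \frac{\rel(\Gamma)}{\mu_G(Q)}
       \big( \maxL \Psi \ast \maxR \Theta \big)(\lambda^{-1}\omega)
  =: A(\lambda^{-1}\omega) .
\]
The convolution relation \eqref{eq:amalgam_convolution} guarantees $A \in \Wstw(G)$ with $\|A\|_{\Wstw} \leq \tfrac{\rel(\Gamma)}{\mu_G(Q)} \|\Theta\|_{\Wstw} \|\Psi\|_{\Wstw}$. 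The main obstacle, and the only genuinely nontrivial point, is to convert this one-sided bound into the symmetric envelope required by Definition~\ref{def:GoodMatrices}: the choice $\Phi(x) := A(x) + A(x^{-1})$ is symmetric, lies in $\Wstw(G)$ with $\|\Phi\|_{\Wstw} \leq 2 \|A\|_{\Wstw}$, and satisfies $|(MN)_{\omega,\lambda}| \leq \min\{\Phi(\lambda^{-1}\omega), \Phi(\omega^{-1}\lambda)\}$ since each of the two terms in $\Phi$ dominates one of the two orientations. Taking the infimum over $\Theta$ and $\Psi$ produces the stated constant $\tfrac{2\rel(\Gamma)}{\mu_G(Q)}$, with the factor $2$ arising precisely from this symmetrization.
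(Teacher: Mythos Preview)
Your argument tracks the paper's closely: parts (ii) and (iii) are identical (Equation~\eqref{eq:StandardEstimateTwo} plus Schur's test), and your completeness proof in (i) via telescoping is the standard equivalent of the paper's absolutely-summable-series criterion.

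There is, however, one genuine slip in part (iv). You assert that $\Phi := A + A^\vee$ satisfies $\|\Phi\|_{\Wstw} \leq 2\|A\|_{\Wstw}$, which would require $\|A^\vee\|_{\Wstw} \leq \|A\|_{\Wstw}$. This fails in general: the involution $f \mapsto f^\vee$ does \emph{not} preserve the $\Wstw$-norm unless the weight satisfies the symmetry condition $w(x) = \Delta(x^{-1}) w(x^{-1})$, which is not part of the definition of an admissible weight. The fix is to compute $A^\vee$ explicitly. Using $(f \ast g)^\vee = g^\vee \ast f^\vee$ together with $(\maxL \Psi)^\vee = \maxR \Psi$ and $(\maxR \Theta)^\vee = \maxL \Theta$ (valid because you symmetrized $\Theta$ and $\Psi$), one obtains $A^\vee = \tfrac{\rel(\Gamma)}{\mu_G(Q)} \, (\maxL \Theta \ast \maxR \Psi)$, and the convolution relation~\eqref{eq:amalgam_convolution} then gives $\|A^\vee\|_{\Wstw} \leq \tfrac{\rel(\Gamma)}{\mu_G(Q)} \|\Theta\|_{\Wstw} \|\Psi\|_{\Wstw}$ directly. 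With this correction your envelope $\Phi = A + A^\vee$ is precisely the two-term envelope the paper constructs (there by computing two separate bounds using the two arms of the $\min$ in the envelope condition, rather than by symmetrizing first and involuting).
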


\subsection{Local spectral invariance for matrices}

The following result provides an analogue of Theorem~\ref{thm:integral_eps_inverse}
for convolution-dominated matrices.
The proof strategy is similar, with the technical caveat that the index sets may not be subgroups of $G$.

\begin{theorem}\label{thm:invertibility_convdom_matrix}
  Let $R > 0$, let $w : G \to (0,\infty)$ be an admissible weight,
  let $\Lambda$ be a relatively separated family in $G$ with $\rel (\Lambda) \leq R$,
  and let $\Phi \in \WstCw (G)$ be non-negative.

  For arbitrary $\delta > 0$,
  there exists $\eps = \eps(\Phi, R, \delta, w) \in (0,\delta)$ such that:
  If
  \begin{enumerate}
    \item $M \in \goodMatrices (\Lambda)$ has envelope $\Phi$ and satisfies
          \(
            \|
              M - \identity_{\ell^2 (\Lambda)}
            \|_{\ell^2 (\Lambda) \to \ell^2 (\Lambda)}
            \leq \eps
          \);
          \vspace{0.1cm}

    \item The function $\phi : B_\delta(1) \to \CC$ is holomorphic;
  \end{enumerate}
  then the operator $\phi(M) : \ell^2 (\Lambda) \to \ell^2(\Lambda)$
  defined through the holomorphic functional calculus is well-defined
  and its associated matrix satisfies $\phi(M) \in \goodMatrices (\Lambda)$.
\end{theorem}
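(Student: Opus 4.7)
The plan is to mirror the proof of Theorem~\ref{thm:integral_eps_inverse} with the matrix product replacing the $\odot$ operation and the index $\ell^2(\Lambda)$ replacing the Hilbert space $\RKHS$. The role of the reproducing-kernel envelope $\Theta$ will be played by a fixed auxiliary bump in $\Wstw(G)$, which is needed to absorb the identity matrix into an envelope.

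First, I would symmetrize: set $\Phi'(x) := \min\{\Phi(x),\Phi(x^{-1})\}$, and pick once and for all a continuous non-negative $\chi \in \Wstw(G)$ with $\chi(e) \geq 1$ (e.g.\ a mollified indicator of $Q$). Then $\chi$ dominates the identity matrix, in the sense that $\delta_{\lambda,\gamma} \leq \chi(\gamma^{-1}\lambda)$ for all $\lambda,\gamma \in \Lambda$. For $\eps > 0$, define $\Phi_\eps := \min\{\eps,\ \Phi' + \chi\}$; by dominated convergence $\|\Phi_\eps\|_{\Wstw} \to 0$ as $\eps \downarrow 0$. Crucially, the entries of $N := M - \identity_{\ell^2(\Lambda)}$ are dominated by $\Phi_\eps$: the pointwise bound $|N_{\lambda,\gamma}| = |\langle N e_\gamma, e_\lambda\rangle| \leq \|N\|_{\ell^2\to\ell^2} \leq \eps$ gives the first half of the min, while $|M_{\lambda,\gamma}| \leq \Phi(\gamma^{-1}\lambda)$ together with the choice of $\chi$ gives the second half (and symmetrically with the arguments swapped).

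Next, I would expand $\phi(z) = \sum_{n=0}^\infty a_n (z-1)^n$ on $B_\delta(1)$ and apply the Cauchy–Hadamard estimate to obtain $|a_n| \leq C_\phi (2/\delta)^n$, exactly as in Step~2 of Theorem~\ref{thm:integral_eps_inverse}. Since $\|N\|_{\ell^2\to\ell^2}\leq\eps<\delta/2$, the spectrum of $M$ lies in $B_{\delta/2}(1)$ and $\phi(M) = \sum_n a_n N^n$ with convergence in operator norm. I then bound the entries of $N^n$ inductively using Lemma~\ref{lem:SynthesisOperatorQuantitativeBound}: writing $C := R/\mu_G(Q)$, one sum at a time yields
\[
  |N^n_{\lambda,\gamma}|
   \leq \sum_{\mu \in \Lambda}
          |N^{n-1}_{\lambda,\mu}|\,|N_{\mu,\gamma}|
   \leq C^{\,n-1}\, \Phi_\eps^{\ast n}(\gamma^{-1}\lambda),
\]
and the symmetric estimate for $N^n_{\gamma,\lambda}$. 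Combining the convolution relation $\WRw \ast \WLw \hookrightarrow \Wstw$ from \eqref{eq:amalgam_convolution} with an induction analogous to \eqref{eq:induction_convolution} gives
\(
  \|\Phi_\eps^{\ast n}\|_{\Wstw}
  \leq \|\Phi_\eps\|_{\Wstw}\,\|\Phi_\eps\|_{\WRw}^{n-1}.
\)
Hence $\|N^n\|_{\goodMatrices} \leq (C\|\Phi_\eps\|_{\WRw})^{n-1}\,\|\Phi_\eps\|_{\Wstw}$.

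Finally, I choose $\eps \in (0,\delta/2)$ small enough that $(2C/\delta)\,\|\Phi_\eps\|_{\WRw} \leq 1/2$; this is possible because $\|\Phi_\eps\|_{\WRw} \leq \|\Phi_\eps\|_{\Wstw} \to 0$. Then
\[
  \sum_{n=0}^\infty |a_n|\,\|N^n\|_{\goodMatrices}
  \leq |a_0|\,\|\chi\|_{\Wstw}
   + C_\phi\,\|\Phi_\eps\|_{\Wstw}
     \sum_{n=1}^\infty \bigl(\tfrac{2C}{\delta}\|\Phi_\eps\|_{\WRw}\bigr)^{n-1}
  < \infty,
\]
so the series converges absolutely in the Banach space $\bigl(\goodMatrices(\Lambda),\|\cdot\|_{\goodMatrices}\bigr)$ of Proposition~\ref{prop:GoodMatrixSpaceReasonable}(i). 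By the continuous embedding $\goodMatrices(\Lambda)\hookrightarrow \mathcal{B}(\ell^2(\Lambda))$ from Proposition~\ref{prop:GoodMatrixSpaceReasonable}(iii), this series coincides with $\phi(M)$ and therefore $\phi(M) \in \goodMatrices(\Lambda)$, with envelope $|a_0|\chi + \sum_{n\geq 1}|a_n|\,C^{n-1}\Phi_\eps^{\ast n}$.

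The main obstacle is bookkeeping the constant $C = R/\mu_G(Q)$ that is picked up at \emph{every} matrix multiplication, since, unlike in the integral-operator setting, the convolution dominance is through a discrete sum rather than an integral. The proof is closed only because this constant can be absorbed into the geometric factor $\|\Phi_\eps\|_{\WRw}^{n-1}$, which in turn can be made arbitrarily small by shrinking $\eps$—hence the explicit dependence $\eps = \eps(\Phi,R,\delta,w)$ on the separation bound $R$.
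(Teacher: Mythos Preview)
Your overall strategy matches the paper's: bound $N := M - \identity_{\ell^2(\Lambda)}$ by a truncated envelope $\Phi_\eps = \min\{\eps,\ \Phi' + \chi\}$, expand $\phi(M)$ as a power series in $N$, control the coefficients via Cauchy--Hadamard, and sum the series in the Banach space $\goodMatrices(\Lambda)$. One intermediate step, however, does not go through as written.

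The pointwise bound $|N^n_{\lambda,\gamma}| \le C^{\,n-1}\Phi_\eps^{\ast n}(\gamma^{-1}\lambda)$ cannot be obtained from Lemma~\ref{lem:SynthesisOperatorQuantitativeBound}. Estimate~\eqref{eq:StandardEstimateOne} gives
\[
  \sum_{\mu\in\Lambda}
    \Phi_\eps^{\ast(n-1)}(\mu^{-1}\lambda)\,\Phi_\eps(\gamma^{-1}\mu)
  \le \frac{\rel(\Lambda)}{\mu_G(Q)}\,
      \bigl(\maxL\Phi_\eps \ast \maxR\Phi_\eps^{\ast(n-1)}\bigr)(\gamma^{-1}\lambda),
\]
not $C\,\Phi_\eps^{\ast n}(\gamma^{-1}\lambda)$; the maximal functions are essential because the discrete sum over $\Lambda$ is not dominated by the continuous convolution without them. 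Iterating therefore introduces a fresh pair $\maxL,\maxR$ at each step, and the envelope of $N^n$ is not a pure convolution power of $\Phi_\eps$.

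The paper avoids tracking explicit envelopes and works directly at the level of the $\goodMatrices$-norm: Proposition~\ref{prop:GoodMatrixSpaceReasonable}\ref{enu:ConvoDominatedMatricesMultiplication} gives
\(
  \|N^n\|_{\goodMatrices}
  \le C_1^{\,n}\,\|N\|_{\goodMatrices}^{\,n}
\)
with $C_1 = \max\{1,\,2R/\mu_G(Q)\}$, and since you have already shown $N \dominated \Phi_\eps$, hence $\|N\|_{\goodMatrices} \le \|\Phi_\eps\|_{\Wstw}$, this yields the geometric bound you need. Choosing $\eps$ so that $C_1\|\Phi_\eps\|_{\Wstw} \le \delta/4$ and summing against $|a_n| \le C_\phi(2/\delta)^n$ then closes the argument exactly as you describe. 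Your closing remark about absorbing $C_1$ into the geometric factor is precisely the point, and is the reason $\eps$ must depend on $R$.
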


\begin{proof}
Let $M \in \goodMatrices (\Lambda)$ with envelope $\Phi$.
The proof proceeds in four steps:

\medskip{}

\textbf{Step 1.} \emph{(Choosing $\eps$)}.
  Choose a symmetric function
  $\varphi \in C_c(G) \hookrightarrow \WstCw(G)$ satisfying $\varphi \geq 0$ and $\varphi(e) = 1$.
  Define $\Psi := \varphi + \Phi \in \WstCw (G)$.
  Furthermore, define $\Psi_k := \min \{ k^{-1}, \Psi \}$ for $k \in \mathbb{N}$.
  Then
  $
    \maxL \maxR \Psi_k (x)
    \leq \min \big\{ \tfrac{1}{k}, \maxL\maxR \Psi (x) \big\}
   \to 0 $
   as $k \to \infty$,
  with pointwise convergence.
  Since also $0 \leq \maxL \maxR \Psi_k \leq \maxL \maxR \Psi \in L_w^1(G)$,
  the dominated convergence theorem implies that
  $
    \| \Psi_k \|_{\WstCw}
    = \| \maxL \maxR \Psi_k \|_{L^1_w}
    \to 0$
    as $ k \to \infty.$
  Let $C_1 := \max \big\{ 1, \frac{2 R}{\mu_G (Q)} \big\}$
  and define $L := \frac{4}{\delta} C_1 > 0$.
  As we just saw, there is $k \in \N_{\geq 2/\delta}$
  such that $\| \Psi_k \|_{\WstCw} \leq L^{-1}$.
  Fix this choice of $k$ for the remainder of the proof and set $\eps := k^{-1}$,
  noting that $\eps \leq \delta/2 < \delta$ and that indeed
  $\eps = \eps(\Phi,R,\delta,w)$.

  \medskip{}

  \textbf{Step 2.} \emph{(A series representation of $\phi(M)$).}
  We write ${\phi(z) = \sum_{n=0}^\infty a_n \, (z - 1)^n}$ for all $z \in B_\delta (1)$
  and a suitable sequence $(a_n)_{n \in \N_0} \subset \CC$,
  with the series converging locally uniformly on $B_\delta (1)$.
  By assumption and by our choice of $\eps$, we have
  \(
    \| M - \identity_{\ell^2(\Lambda)} \|_{\ell^2(\Lambda) \to \ell^2(\Lambda)}
    \leq \eps \leq \delta/2
  \),
  and hence $\sigma(M) \subset \overline{B_{\delta/2}}(1) \subset B_\delta (1)$.
  Thus, by elementary properties of the holomorphic functional calculus
  (see \cite[Theorem~10.27]{RudinFA}), we see
  \begin{equation}
    \phi(M)
    = \sum_{n=0}^\infty
        a_n \, \big( M - \identity_{\ell^2(\Lambda)} \big)^n
    \label{eq:ConvoDominatedMatricesHolomorphicFunctionProofExpansion}
  \end{equation}
  with convergence of the series in the operator norm.

  By the Cauchy-Hadamard formula, it follows that
  $\delta \leq \big[ \limsup_{n\to\infty} |a_n|^{1/n} \big]^{-1}$.
  Thus, there is ${N = N(\phi, \delta) \in \N}$ satisfying $|a_n|^{1/n} \leq 2/\delta$
  for all $n \geq N$, and hence a ${C_\phi = C_\phi(\delta) > 0}$ such that
  \begin{equation}
    |a_n| \leq C_\phi \cdot (2/\delta)^n
    \qquad \text{for all } n \in \N_0 .
    \label{eq:ConvoDominatedMatricesHolomorphicFunctionProofCoefficientEstimate}
  \end{equation}

  \medskip{}

  \textbf{Step 3.} \emph{(Showing $\| M - \identity_{\ell^2(\Lambda)} \|_{\goodMatrices} \leq L^{-1}$)}.
  As usual, identify ${M - \identity_{\ell^2(\Lambda)} : \ell^2(\Lambda) \to \ell^2(\Lambda)}$
  with the matrix
  \(
    N
    = (N_{\lambda,\lambda'})_{\lambda, \lambda' \in \Lambda}
    \in \CC^{\Lambda \times \Lambda}
  \)
  with entries defined by
  \begin{align*}
    N_{\lambda,\lambda'}
    & := \big\langle
           (M - \identity_{\ell^2(\Lambda)}) \, \delta_{\lambda'}, \,\,
           \delta_\lambda
         \big\rangle
       = M_{\lambda, \lambda'} - \delta_{\lambda,\lambda'} .
  \end{align*}
  Since by assumption
  $\| M - \identity_{\ell^2(\Lambda)} \|_{\ell^2(\Lambda) \to \ell^2(\Lambda)} \leq \eps = k^{-1}$,
  we easily see $|N_{\lambda,\lambda'}| \leq k^{-1}$ for all $\lambda, \lambda' \in \Lambda$.
  Furthermore, since $\Phi$ is an envelope for $M$,
  \[
    |N_{\lambda,\lambda'}|
    =    \Big|
           M_{\lambda, \lambda'}
           - \delta_{\lambda,\lambda'}
         \Big|
    \leq \Phi \big( (\lambda')^{-1} \lambda \big) + \varphi \big( (\lambda')^{-1} \lambda \big)
    =    \Psi \big( (\lambda')^{-1} \lambda \big).
  \]
  With the same arguments, we also see $|N_{\lambda,\lambda'}| \leq \Psi (\lambda^{-1} \lambda')$.
  Overall, we have thus shown $N \dominated \Psi_k$, and hence
  \(
    \| N \|_{\goodMatrices}
    \leq \| \Psi_k \|_{\WstCw}
    \leq L^{-1};
  \)
  see Step~1.

  \medskip{}

  \textbf{Step 4.} \emph{(Convergence of the series
  \eqref{eq:ConvoDominatedMatricesHolomorphicFunctionProofExpansion} in $\goodMatrices(\Lambda)$)}.
  Recall that $C_1 = \max \big\{ 1, \frac{2 R}{\mu_G (Q)} \big\}$ and that $\rel (\Lambda) \leq R$.
  Therefore, we see by Part~\ref{enu:ConvoDominatedMatricesMultiplication}
  of Proposition~\ref{prop:GoodMatrixSpaceReasonable} and an easy induction that
  \[
    N^n \in \goodMatrices(\Lambda)
    \quad \text{with} \quad
    \| N^n \|_{\goodMatrices}
    \leq C_1^n \cdot \| N \|_{\goodMatrices}^n
    \leq (C_1 / L)^n
    =    (\delta / 4)^n
    \quad \text{for all } n \in \N.
  \]
  In view of the estimate~\eqref{eq:ConvoDominatedMatricesHolomorphicFunctionProofCoefficientEstimate}
  for the coefficients $a_n$, this implies
  \[
    \sum_{n=1}^\infty
      |a_n| \cdot \big\| \big( M - \identity_{\ell^2(\Lambda)} \big)^n \big\|_{\goodMatrices}
    \leq C_\phi \cdot
         \sum_{n=1}^\infty
           \Big(
             \frac{2}{\delta}
           \Big)^n
           \cdot \Big(
                   \frac{\delta}{4}
                 \Big)^n
    =   C_\phi \cdot \sum_{n=1}^\infty \Big( \frac{1}{2} \Big)^n
    < \infty .
  \]
  By completeness of $\goodMatrices(\Lambda)$, this implies that the series
  $T_0 := \sum_{n=1}^\infty a_n \, (M - \identity_{\ell^2(\Lambda)})^n$
  converges in $\goodMatrices (\Lambda)$.
  Finally, note that
  $(M - \identity_{\ell^2(\Lambda)})^0 = \identity_{\ell^2(\Lambda)} \in \goodMatrices (\Lambda)$
  as well, since $\identity_{\ell^2(\Lambda)} \dominated \varphi$.
  Therefore, ${a_0 \cdot (M - \identity_{\ell^2(\Lambda)})^0 + T_0 \in \goodMatrices}$.
  Since $\goodMatrices \hookrightarrow \mathcal{B} (\ell^2(\Lambda))$
  by Equation~\eqref{eq:GoodMatricesActOnEllP}
  and in view of the series representation
  \eqref{eq:ConvoDominatedMatricesHolomorphicFunctionProofExpansion} of $\phi(M)$,
  this shows that $\phi(M) \in \goodMatrices(\Lambda)$.
\end{proof}

\subsection{Systems of molecules and convolution-dominated operators}
\label{sub:molecules_convolutiondominated}

This section provides several results on the relation between systems of molecules
and convolution-dominated operators.

\begin{lemma}\label{lem:molecule_bessel}
  Let $\RKHS \subspace L^2 (G)$ be a RKHS,
  let $\Lambda$ be a relatively separated family in $G$,
  and let $(g_{\lambda})_{\lambda \in \Lambda}$ be a system of $w$-molecules in $\RKHS$
  for some admissible weight $w$.
  Then $(g_{\lambda})_{\lambda \in \Lambda}$ forms a Bessel sequence in $\RKHS$.
\end{lemma}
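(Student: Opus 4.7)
The plan is to prove the Bessel bound by establishing that the Gramian matrix $\gramian$ of the system $(g_\lambda)_{\lambda \in \Lambda}$ belongs to the convolution-dominated matrix class $\goodMatrices(\Lambda)$, which automatically yields its boundedness on $\ell^2(\Lambda)$ and hence the Bessel inequality via duality. The advantage of going through the Gramian is that the molecule bound interacts naturally with convolution on $G$, so the enveloping estimate required by Definition~\ref{def:GoodMatrices} will drop out of a single change of variables.

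Concretely, I would first write, for each $\lambda,\lambda' \in \Lambda$,
\[
  |\langle g_\lambda, g_{\lambda'}\rangle|
  \leq \int_G \Phi(\lambda^{-1} x) \, \Phi(x^{-1} \lambda') \, d\mu_G(x),
\]
where $\Phi \in \WstCw(G)$ is the envelope of the molecules. Substituting $y = \lambda^{-1} x$ and using the left-invariance of $\mu_G$ gives $|\langle g_\lambda, g_{\lambda'}\rangle| \leq (\Phi * \Phi)(\lambda^{-1}\lambda')$. Using instead the other half of the molecule bound, namely $|g_\lambda(x)| \leq \Phi(x^{-1}\lambda)$ and $|g_{\lambda'}(x)| \leq \Phi((\lambda')^{-1}x)$, together with the substitution $y = (\lambda')^{-1}x$, yields symmetrically $|\langle g_\lambda, g_{\lambda'}\rangle| \leq (\Phi * \Phi)((\lambda')^{-1}\lambda)$. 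Taking the minimum gives the two-sided enveloping bound
\[
  |\gramian_{\lambda,\lambda'}|
  = |\langle g_{\lambda'}, g_\lambda \rangle|
  \leq \min \big\{ (\Phi * \Phi)(\lambda^{-1}\lambda'), \, (\Phi * \Phi)((\lambda')^{-1}\lambda) \big\}.
\]

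Next I would invoke the convolution relation \eqref{eq:amalgam_convolution}, together with the embedding $\WstCw(G) \subset \WRw(G) \cap \WLw(G)$, to conclude that $\Phi * \Phi \in \WstCw(G)$. This shows $\gramian \in \goodMatrices(\Lambda)$ in the sense of Definition~\ref{def:GoodMatrices}, and therefore part (iii) of Proposition~\ref{prop:GoodMatrixSpaceReasonable} delivers boundedness of $\gramian$ on $\ell^2(\Lambda)$ with an explicit norm bound in terms of $\rel(\Lambda)$, $\mu_G(Q)$, and $\|\Phi * \Phi\|_{\WstCw}$.

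Finally, I would close the argument by duality. For any finitely supported $c = (c_\lambda)_{\lambda \in \Lambda}$,
\[
  \Big\| \sum_{\lambda \in \Lambda} c_\lambda \, g_\lambda \Big\|_{\RKHS}^2
  = \langle \gramian c, c \rangle_{\ell^2(\Lambda)}
  \leq \| \gramian \|_{\ell^2 \to \ell^2} \, \| c \|_{\ell^2(\Lambda)}^2,
\]
so the synthesis operator $D : \ell^2(\Lambda) \to \RKHS$, $c \mapsto \sum_\lambda c_\lambda g_\lambda$, extends to a bounded operator. Its adjoint is precisely the analysis operator $f \mapsto (\langle f, g_\lambda \rangle)_{\lambda \in \Lambda}$, whose boundedness from $\RKHS$ into $\ell^2(\Lambda)$ is the Bessel inequality, with Bessel bound $\|\gramian\|_{\ell^2 \to \ell^2}$. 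No step here is expected to be genuinely delicate; the only mild point is verifying the two-sided envelope to place $\gramian$ within $\goodMatrices(\Lambda)$ rather than in a merely one-sided Schur class, which is handled by the symmetric form of the molecule condition.
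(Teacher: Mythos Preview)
Your proof is correct, but it takes a somewhat different route from the paper. The paper argues directly on the synthesis side: it symmetrizes the envelope to $\Phi(x) := \min\{\Psi(x),\Psi(x^{-1})\}$, notes that then $|g_\lambda| \leq L_\lambda \Phi$ with $\Phi \in L^1(G)$ and $\Phi^\vee = \Phi \in \WL(G)$, and applies the ready-made synthesis bound for $D_{\Phi,\Lambda}$ from Lemma~\ref{lem:SynthesisOperatorQuantitativeBound} to conclude $\big\| \sum_\lambda a_\lambda g_\lambda \big\|_{L^2} \leq \big\| \sum_\lambda |a_\lambda| L_\lambda \Phi \big\|_{L^2} \lesssim \|a\|_{\ell^2}$ in one line.

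Your argument instead bounds the Gramian entries by $(\Phi \ast \Phi)$ and places $\gramian$ in the matrix class $\goodMatrices(\Lambda)$, then invokes Proposition~\ref{prop:GoodMatrixSpaceReasonable}(iii). This is exactly the computation the paper carries out later in Lemma~\ref{lem:molecule_convmatrix}(i), so you have effectively anticipated that lemma and used it here. Both proofs rest on Schur-type estimates (hidden in Lemma~\ref{lem:SynthesisOperatorQuantitativeBound} versus Proposition~\ref{prop:GoodMatrixSpaceReasonable}), so the difference is structural rather than substantive: the paper's version is shorter and avoids referencing the matrix machinery of Section~\ref{sub:ConvolutionDominatedMatrixOperators}, while yours makes the convolution-dominated nature of the Gramian explicit from the outset, which is conceptually nice and foreshadows the later Riesz-sequence arguments.
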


\begin{proof}
  Let $\Psi \in \Wstw(G)$ be an envelope for $(g_\lambda)_{\lambda \in \Lambda}$.
  Define $\Phi(x) := \min \{ \Psi(x), \Psi(x^{-1}) \}$ for $x \in G$,
  and note that $|g_\lambda(\cdot)| \leq L_\lambda \Phi$ and that $\Phi$
  is continuous and satisfies $\Phi \in L^1(G)$ and, moreover, $\Phi^{\vee} = \Phi \in \WL(G)$.
  Now, let $a = (a_\lambda)_{\lambda \in \Lambda} \in \ell^2(\Lambda)$ be finitely supported.
  Then, Lemma~\ref{lem:SynthesisOperatorQuantitativeBound} shows
  \(
    \big\|
      \sum_{\lambda \in \Lambda}
        a_\lambda \, g_\lambda
    \big\|_{L^2}
    \leq \big\|
           \sum_{\lambda \in \Lambda}
             |a_\lambda| \, L_\lambda \Phi
         \big\|_{L^2}
    \lesssim \| a \|_{\ell^2},
  \)
  where the implied constant only depends on $\Lambda$ and $\Phi$.
  This proves the claim.
\end{proof}

\begin{lemma}\label{lem:molecule_convintegeral}
  Let $\RKHS \subspace L^2(G)$ be a RKHS and let $w : G \to (0,\infty)$ be an admissible weight.
  Suppose $(g_{\lambda})_{\lambda \in \Lambda}$ is a system of $w$-molecules in $\RKHS$.
  Then the following assertions hold:
  \begin{enumerate}[label=(\roman*)]
    \item The kernel $H : G \times G \to \CC$ given by
          \begin{align}\label{eq:H_integralkernel}
                H(x,y) = \sum_{\lambda \in \Lambda}
                                g_{\lambda} (x) \, \overline{g_{\lambda}(y)}
          \end{align}
          is well-defined (with absolute convergence of the series) and $w$-localized in $\RKHS$;
          that is, there exists $\widetilde{\Phi} \in \WstCw (G)$ such that
          \begin{align}
            \max \big\{ |H(x,y)|, |H(y,x)| \big\} \leq \widetilde{\Phi} (y^{-1} x),
            \qquad \text{for all} \quad x,y \in G,
            \label{eq:localization_sum_molecules}
          \end{align}
          and
          $H(\cdot, y) \in \RKHS$ and $\overline{H(x, \cdot)} \in \RKHS$ for all $x,y \in G$.
          \vspace*{0.1cm}

    \item Suppose $\RKHS$ satisfies (\Ktwo).
          If $U \subset Q$ is such that $\Lambda$ is $U$-dense with
          associated disjoint cover $(U_{\lambda})_{\lambda \in \Lambda}$,
          and $(\tau_{\lambda})_{\lambda \in \Lambda} \subset [0,\infty)$
          satisfies $\tau_{\lambda} \leq C \cdot \mu_G (U_{\lambda})$
          for all $\lambda \in \Lambda$ and some $C > 0$,
          then for the vectors $g_{\lambda} := \tau_{\lambda}^{1/2} \cdot k_{\lambda} \in \RKHS$,
          the function $\widetilde{\Phi} \in \WstCw$ in \eqref{eq:localization_sum_molecules}
          can be chosen as $\widetilde{\Phi} = C \cdot([\maxL \Theta] \ast [\maxR \Theta])$,
          where $\Theta \in \WstCw(G)$ is as in \eqref{eq:localization_kernel}.
          \vspace*{0.1cm}

    \item The frame operator $\frameop : \RKHS \to \RKHS$ associated to
          $(g_{\lambda})_{\lambda \in \Lambda}$ coincides with $T_{H}|_{\RKHS} : \RKHS \to \RKHS$
          with the $w$-localized integral kernel $H$ defined in \eqref{eq:H_integralkernel}.
          \vspace*{0.1cm}

    \item If $H' : G \times G \to \CC$ is a $w$-localized kernel in $\RKHS$,
          then the family $(h_{\lambda})_{\lambda \in \Lambda}$ defined by
          \[
            h_{\lambda}(x)
            := \int_G H'(x,y) \, g_{\lambda} (y) \; d\mu_G (y)
            = (T_{H'} \, g_\lambda) (x)
          \]
          forms a system of $w$-molecules in $\RKHS$.
  \end{enumerate}
\end{lemma}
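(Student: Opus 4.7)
The plan is to handle the four claims in order, with part (i) providing the core bound that also underlies (iii) and part (ii) requiring the most careful bookkeeping with maximal functions.

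For (i), let $\Psi \in \WstCw(G)$ be the envelope of the molecules. Using both sides of the molecule estimate, I bound termwise
\[
  |g_\lambda(x) \, \overline{g_\lambda(y)}|
  \leq \Psi(\lambda^{-1} x) \, \Psi(y^{-1} \lambda)
\]
and invoke estimate \eqref{eq:StandardEstimateOne} of Lemma~\ref{lem:SynthesisOperatorQuantitativeBound} to obtain
$\sum_\lambda \Psi(\lambda^{-1} x) \, \Psi(y^{-1}\lambda) \leq \frac{\rel(\Lambda)}{\mu_G(Q)} \cdot (\maxL\Psi \ast \maxR\Psi)(y^{-1} x)$.
Since $\Psi \in \Wstw(G)$ gives $\maxL\Psi \in \WRw(G)$ and $\maxR\Psi \in \WLw(G)$, the convolution relation \eqref{eq:amalgam_convolution} places $\widetilde\Phi := \frac{\rel(\Lambda)}{\mu_G(Q)} \cdot \maxL\Psi \ast \maxR\Psi$ in $\WstCw(G)$; the bound on $|H(y,x)|$ is identical because $H$ is Hermitian. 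For the membership $H(\cdot,y) \in \RKHS$ (and similarly for the other slot), the partial sums $\sum_{\lambda \in F} \overline{g_\lambda(y)} \, g_\lambda$ lie in $\RKHS$; the dominated bound just established guarantees $L^2$-convergence, and closedness of $\RKHS$ finishes the job.

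For (ii), plug in $g_\lambda = \tau_\lambda^{1/2} k_\lambda$ and use $|k(x,\lambda)| \leq \Theta(\lambda^{-1} x)$ from (\Ktwo). Using $\tau_\lambda \leq C\,\mu_G(U_\lambda)$, rewrite $\tau_\lambda$ as an integral over $U_\lambda$:
\[
  \tau_\lambda \,|k_\lambda(x)| \, |k_\lambda(y)|
  \leq C \int_{U_\lambda}
          \Theta(\lambda^{-1} x) \, \Theta(y^{-1} \lambda)
        \, d\mu_G(z).
\]
The point now is that $U_\lambda \subset \lambda Q$ forces $\lambda \in z Q$ for $z \in U_\lambda$, whence $\lambda^{-1} x \in Q z^{-1} x$ and $y^{-1}\lambda \in y^{-1} z Q$, giving $\Theta(\lambda^{-1} x) \leq \maxR\Theta(z^{-1} x)$ and $\Theta(y^{-1}\lambda) \leq \maxL\Theta(y^{-1} z)$. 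Summing over $\lambda$ and invoking the disjointness $G = \bigcupdot_\lambda U_\lambda$ collapses the sum into an integral over $G$, which a left-Haar change of variables identifies with $C \cdot (\maxL\Theta \ast \maxR\Theta)(y^{-1} x)$; the bound on $|H(y,x)|$ is again automatic.

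For (iii), unfold $\frameop f(x) = \sum_\lambda \langle f, g_\lambda\rangle \, g_\lambda(x)$ and interchange sum and integral. The interchange is legitimate because the termwise absolute bound from (i) gives $\sum_\lambda |g_\lambda(x)\,\overline{g_\lambda(y)}| \leq \widetilde\Phi(y^{-1} x) \in L^2(G)$ in the $y$-variable, so Fubini/dominated convergence applies and yields $\frameop f(x) = \int_G H(x,y) f(y) \, d\mu_G(y) = T_H f(x)$. For (iv), bound
\[
  |h_\lambda(x)|
  \leq \int_G \Phi'(y^{-1}x) \, \Psi(\lambda^{-1} y) \, d\mu_G(y)
  = (\Psi \ast \Phi')(\lambda^{-1} x)
\]
by the left-Haar substitution $y = \lambda u$, and symmetrically $|h_\lambda(x)| \leq (\Phi' \ast \Psi)(x^{-1}\lambda)$ using the other half of the envelope bound on $g_\lambda$ and $H'$; both convolutions lie in $\WstCw(G)$ by \eqref{eq:amalgam_convolution}. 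Membership $h_\lambda \in \RKHS$ follows since $T_{H'}$ maps $L^2(G)$ into $\RKHS$ by Lemma~\ref{lem:AdmissibleKernelProperties}(i). The only mildly delicate step is (ii): the care lies in passing from discrete factors $\Theta(\lambda^{-1} x)$ to the maximal functions $\maxR\Theta(z^{-1} x)$ at the right side of the Haar-aware convolution, so the resulting envelope is $\maxL\Theta \ast \maxR\Theta$ rather than $\Theta \ast \Theta$.
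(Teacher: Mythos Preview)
Your proof is correct and follows essentially the same route as the paper's: the same termwise envelope bound via Lemma~\ref{lem:SynthesisOperatorQuantitativeBound} in (i), the same maximal-function trick over the disjoint cover in (ii), and identical convolution estimates in (iii)--(iv). The one minor variation is your justification of $H(\cdot,y)\in\RKHS$: you argue by dominated convergence (partial sums dominated by $L_y\widetilde\Phi\in L^2$), whereas the paper instead bounds $\sum_\lambda |g_\lambda(y)|\,\|g_\lambda\|_{L^2}$ using the Bessel property from Lemma~\ref{lem:molecule_bessel} together with \eqref{eq:StandardEstimateTwo}; both arguments are valid and yield the same conclusion.
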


\begin{proof}
  (i) Let $\Phi \in \WstCw(G)$ be an envelope for $(g_{\lambda})_{\lambda \in \Lambda}$.
  Since $\Phi$ is continuous, we can apply Equation~\eqref{eq:StandardEstimateOne},
  which shows for all $x,y \in G$ that
  \[
    |H(x,y)|
    \leq \sum_{\lambda \in \Lambda}
           |g_\lambda(x)| \,\, |g_\lambda(y)|
    \leq \sum_{\lambda \in \Lambda}
           \Phi(\lambda^{-1} x) \, \Phi(y^{-1} \lambda)
    \leq \frac{\rel(\Lambda)}{\mu_G(Q)}
         (\maxL \Phi \ast \maxR \Phi) (y^{-1} x) .
  \]
  Since $\maxL \Phi \ast \maxR \Phi \in \WstCw (G)$
  by the convolution relation \eqref{eq:amalgam_convolution}
  and since $H(y,x) = \overline{H(x,y)}$,
  the localization estimate \eqref{eq:localization_sum_molecules} follows,
  and we see that the series defining $H$ converges absolutely.

  To show that $H(\cdot, y) \in \RKHS$ for $y \in G$,
  first note as a consequence of Equation~\eqref{eq:StandardEstimateTwo} that
  \(
    \sum_{\lambda \in \Lambda}
      |g_\lambda (y)|
    \leq \sum_{\lambda \in \Lambda}
           \Phi(y^{-1} \lambda)
    \leq \frac{\rel(\Lambda)}{\mu_G(Q)} \, \| \Phi \|_{\WL}
  \)
  for all $y \in G$.
  Also, ${C := \sup_{\lambda \in \Lambda} \| g_{\lambda} \|_{L^2}^2 < \infty}$,
  since $(g_{\lambda})_{\lambda \in \Lambda}$ is Bessel by Lemma~\ref{lem:molecule_bessel}.
  Thus,
  \(
    \sum_{\lambda \in \Lambda}
      |g_{\lambda}(y) | \,\, \| g_{\lambda} \|_{L^2}
    \leq C^{1/2} \,
         \frac{\rel(\Lambda)}{\mu_G (Q)} \,
         \| \Phi \|_{\WL}
    < \infty ,
  \)
  showing that the series defining $H(\cdot, y)$ converges in $L^2 (G)$.
  Since $g_{\lambda} \in \RKHS$ for all $\lambda \in \Lambda$,
  it follows that $H(\cdot, y) \in \RKHS$ for all $y \in G$.
  Hence, also $\overline{H(x, \cdot)} = H(\cdot, x) \in \RKHS$ for all $x \in G$.

  \medskip{}

  (ii) Let $x,y \in G$.
  For $\lambda \in \Lambda$ and $z \in \lambda U \subset \lambda Q$,
  we have $\lambda^{-1} x = \lambda^{-1} z z^{-1} x \in Q z^{-1} x$
  and $y^{-1} \lambda = y^{-1} z (\lambda^{-1} z)^{-1} \in y^{-1} z Q$.
  Since $Q$ is open and $\Theta$ is continuous,
  this implies $|k_\lambda (x)| \leq \Theta(\lambda^{-1} x) \leq \maxR \Theta (z^{-1} x)$
  and $|k_\lambda (y)| \leq \Theta(y^{-1} \lambda) \leq \maxL \Theta (y^{-1} z)$
  for all $\lambda \in \Lambda$ and $z \in \lambda U \supset U_\lambda$.
  Hence,
  \begin{align*}
    |H(x,y)|
    &\leq C \sum_{\lambda \in \Lambda}
              \mu_G(U_{\lambda}) \,
              |k_{\lambda} (x)| \,
              |k_{\lambda} (y)|
    \leq C \sum_{\lambda \in \Lambda}
             \int_{U_{\lambda}}
               \maxR \Theta (z^{-1} x) \,
               \maxL \Theta (y^{-1} z)
             \; d\mu_G (z) \\
    &= C
       \int_G
         \maxL \Theta (t) \,
         \maxR \Theta (t^{-1} y^{-1} x)
       \; d\mu_G (t)
    = C \cdot (\maxL \Theta \ast \maxR \Theta ) (y^{-1} x).
  \end{align*}
  This proves the claim since $H(y,x) = \overline{H(x,y)}$.

  \medskip{}

  (iii) A combination of Part~(i) and Lemma~\ref{lem:AdmissibleKernelProperties}
  shows that $T_{H}|_{\RKHS} : \RKHS \to \RKHS$ is well-defined and bounded.
  Let $f \in \RKHS$ and $x \in G$.
  Then, since the proof of Part~(i) shows for each $x \in G$ that the series defining
  $\overline{H(x,\cdot)} = \sum_{\lambda \in \Lambda} \overline{g_\lambda (x)} g_\lambda$
  converges in $L^2(G)$, it follows that
  \[
    T_H f(x)
    = \langle f, \overline{H(x,\cdot)} \rangle_{L^2}
    = \sum_{\lambda \in \Lambda}
        \langle
          f, \,
          \overline{g_{\lambda} (x)} \, g_{\lambda}
        \rangle_{L^2}
    = \sum_{\lambda \in \Lambda}
        \langle
          f,
          g_{\lambda}
        \rangle_{L^2} \,
        g_{\lambda} (x),
    \qquad \text{for all} \quad x \in G,
  \]
  and hence $\frameop = T_H|_{\RKHS}$.

  \medskip{}

  (iv) Let $ \Phi' \in \WstCw(G)$ be an envelope for $H'$.
  Since $H'$ is $w$-localized in $\RKHS$,
  Lemma~\ref{lem:AdmissibleKernelProperties}(i) shows
  that $h_{\lambda} \in \RKHS$ for all $\lambda \in \Lambda$.
  Moreover,
  \(
    |h_{\lambda} (x)|
    \leq \int_G \Phi'(y^{-1} x) \, \Phi (\lambda^{-1} y) \; d\mu_G (y)
    =    (\Phi \ast \Phi')(\lambda^{-1} x),
  \)
  and
  \(
    |h_\lambda (x)|
    \leq \int_G
           \Phi' (x^{-1} y) \, \Phi(y^{-1} \lambda)
         \, d \mu_G (y)
    = (\Phi' \ast \Phi) (x^{-1} \lambda)
  \)
  for all $x \in G$ and $\lambda \in \Lambda$.
  Since $\Phi \ast \Phi' + \Phi' \ast \Phi \in \WstCw(G)$
  by the convolution relation \eqref{eq:amalgam_convolution}, the result follows.
\end{proof}

\begin{remark}[Necessity of (\Ktwo)]\label{rem:KernelDecayNecessary}
  If $(g_\lambda)_{\lambda \in \Lambda}$ is a frame for $\RKHS$ with dual frame
  $(h_\lambda)_{\lambda \in \Lambda}$, then
  \[
    k(x,y)
    = \langle k_y, k_x \rangle
    = \sum_{\lambda \in \Lambda}
        \langle k_y, h_\lambda \rangle \, \langle g_\lambda, k_x \rangle
    = \sum_{\lambda \in \Lambda}
        g_\lambda(x) \, \overline{h_\lambda (y)}.
  \]
  Thus, as in the proof of Part~(i) of Lemma~\ref{lem:molecule_convintegeral},
  if $(g_\lambda)_{\lambda \in \Lambda}$ and $(h_\lambda)_{\lambda \in \Lambda}$ are both
  systems of $w$-molecules, then it follows that $k$ is $w$-localized.
\end{remark}

The last result of this section will be useful in studying Riesz sequences of molecules.

\begin{lemma}\label{lem:molecule_convmatrix}
  Let $\RKHS \subspace L^2(G)$ be a RKHS and let $w : G \to (0,\infty)$ be an admissible weight.
  Suppose $(g_{\lambda})_{\lambda \in \Lambda}$ is a system of $w$-molecules
  with envelope $\Phi \in \WstCw(G)$.
  Then the following assertions hold:
  \begin{enumerate}[label=(\roman*)]
    \item The Gramian matrix
          \(
            \gramian
            = \big(
                \langle
                  g_{\lambda'},
                  g_{\lambda}
                \rangle
              \big)_{\lambda, \lambda' \in \Lambda}
          \)
          associated to $(g_{\lambda} )_{\lambda \in \Lambda}$ satisfies
          \[
            |\langle g_{\lambda'}, g_{\lambda} \rangle|
            = |\langle g_{\lambda}, g_{\lambda'} \rangle|
            \leq \widetilde{\Phi} \big( (\lambda')^{-1} \lambda \big),
            \qquad \text{for all} \quad \lambda, \lambda' \in \Lambda,
          \]
          where $\widetilde{\Phi} \in \WstCw (G)$ is defined as
          $\widetilde{\Phi} := \Phi \ast \Phi$.
          Consequently, $\gramian \in \goodMatrices (\Lambda)$.
          \vspace*{0.1cm}

    \item For a relatively separated $\Gamma$ in $G$, let
          \({
            M = (M_{\gamma, \lambda} )_{\gamma \in \Gamma, \lambda \in \Lambda}
              \in \goodMatrices(\Lambda, \Gamma)
            .
          }\)
          Then also the family $(h_{\gamma} )_{\gamma \in \Gamma}$ defined by
          \[
            h_{\gamma}
            := M (g_\lambda)_{\lambda \in \Lambda}
            := \sum_{\lambda \in \Lambda}
                 M_{\gamma, \lambda} \,\, g_{\lambda}
          \]
          forms a system of $w$-molecules in $\RKHS$.
  \end{enumerate}
\end{lemma}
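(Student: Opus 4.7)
My plan is to prove both parts by direct computation, combining the pointwise envelope bounds with Lemma~\ref{lem:SynthesisOperatorQuantitativeBound} and the convolution relation \eqref{eq:amalgam_convolution}.

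For part (i), I would start from the basic estimate
\[
  \big|\langle g_{\lambda'}, g_\lambda\rangle\big|
  \leq \int_G |g_{\lambda'}(x)|\,|g_\lambda(x)|\,d\mu_G(x).
\]
To obtain the \emph{left-invariant} shape $(\lambda')^{-1}\lambda$ of the envelope, I would use the \emph{asymmetric} bounds provided by Definition~\ref{def:molecule}: apply $|g_{\lambda'}(x)| \leq \Phi((\lambda')^{-1}x)$ to the first factor and $|g_\lambda(x)| \leq \Phi(x^{-1}\lambda)$ to the second. The left-Haar change of variable $y = (\lambda')^{-1}x$ then turns the integrand into $\Phi(y)\,\Phi(y^{-1}(\lambda')^{-1}\lambda)$, which integrates to $(\Phi \ast \Phi)\bigl((\lambda')^{-1}\lambda\bigr)$. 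Setting $\widetilde{\Phi} := \Phi \ast \Phi$ thus gives the claimed envelope, and since $\Phi \in \WstCw \subset \WLw \cap \WRw$, the convolution relation \eqref{eq:amalgam_convolution} yields $\widetilde{\Phi} \in \WstCw$, which is exactly what is needed for $\gramian \in \goodMatrices(\Lambda)$.

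For part (ii), I would first verify that $h_\gamma = \sum_{\lambda \in \Lambda} M_{\gamma,\lambda}\, g_\lambda$ is a well-defined element of $\RKHS$: by Lemma~\ref{lem:molecule_bessel} the family $(g_\lambda)$ is a Bessel sequence, and by the Schur-type estimate \eqref{eq:GoodMatricesAreSchur} each row $(M_{\gamma,\lambda})_{\lambda}$ lies even in $\ell^1(\Lambda) \subset \ell^2(\Lambda)$, so the series converges unconditionally in $\RKHS$ (and absolutely pointwise). Writing $\Theta \in \WstCw$ for an envelope of $M$ and $\Phi \in \WstCw$ for an envelope of $(g_\lambda)$, I would then estimate $|h_\gamma(x)|$ in two ways. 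Using $|M_{\gamma,\lambda}| \leq \Theta(\gamma^{-1}\lambda)$ together with $|g_\lambda(x)| \leq \Phi(\lambda^{-1}x)$, the inequality \eqref{eq:StandardEstimateOne} applied to the sum over $\lambda$ (with $y = \gamma$) gives
\[
  |h_\gamma(x)| \leq \frac{\rel(\Lambda)}{\mu_G(Q)}\,\bigl(\maxL \Theta \ast \maxR \Phi\bigr)(\gamma^{-1}x).
\]
Using instead $|M_{\gamma,\lambda}| \leq \Theta(\lambda^{-1}\gamma)$ and $|g_\lambda(x)| \leq \Phi(x^{-1}\lambda)$, another application of \eqref{eq:StandardEstimateOne} produces the symmetric bound $\tfrac{\rel(\Lambda)}{\mu_G(Q)}(\maxL \Phi \ast \maxR \Theta)(x^{-1}\gamma)$. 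Taking the sum of these two convolutions as the envelope $\Psi$ yields the required two-sided estimate $|h_\gamma(x)| \leq \min\{\Psi(\gamma^{-1}x),\Psi(x^{-1}\gamma)\}$.

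The main technical point is verifying $\Psi \in \WstCw$. This is where I would invoke \eqref{eq:amalgam_convolution} once more, after checking from the definitions that $\maxL\Theta, \maxL\Phi \in \WRw$ and $\maxR\Theta, \maxR\Phi \in \WLw$ (each of these four memberships reduces to the identity $\|\maxR\maxL f\|_{L^1_w} = \|\maxL\maxR f\|_{L^1_w} = \|f\|_{\WstCw}$). Apart from this routine bookkeeping, the real subtlety is making sure both halves of the molecule and envelope inequalities are invoked, as only this guarantees the two-sided localization needed by Definition~\ref{def:molecule}.
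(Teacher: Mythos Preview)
Your proposal is correct and follows essentially the same approach as the paper: for (i) you use the same asymmetric envelope bounds and change of variable to obtain $(\Phi\ast\Phi)((\lambda')^{-1}\lambda)$, and for (ii) you apply \eqref{eq:StandardEstimateOne} twice with the two pairs of envelope inequalities, arriving at the same envelope $\tfrac{\rel(\Lambda)}{\mu_G(Q)}\bigl[(\maxL\Theta\ast\maxR\Phi)+(\maxL\Phi\ast\maxR\Theta)\bigr]$. Your additional remarks on the $L^2$-convergence of the series defining $h_\gamma$ and on verifying $\Psi\in\WstCw$ via $\maxL f\in\WRw$, $\maxR f\in\WLw$ are correct and simply make explicit what the paper leaves to the reader.
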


\begin{proof}
(i) For arbitrary $\lambda, \lambda' \in \Lambda$, we have
    \[
      |\langle g_{\lambda'}, g_{\lambda} \rangle|
      =    |\langle g_{\lambda}, g_{\lambda'} \rangle|
      \leq \int_G
             \Phi (x^{-1} \lambda) \, \Phi \big( (\lambda')^{-1} x \big)
           \; d\mu_G (x)
      = (\Phi \ast \Phi) \big( (\lambda')^{-1} \lambda \big).
    \]
    The convolution relation \eqref{eq:amalgam_convolution} shows that
    $\Phi \ast \Phi \in \WstCw(G)$, and hence $\gramian \in \goodMatrices(\Lambda)$.

\medskip{}

(ii) Let $\Theta \in \strongWiener(G)$ be an envelope function for $M$.
     Let $x \in G$ and $\gamma \in \Gamma$ be arbitrary.
     Then, Equation~\eqref{eq:StandardEstimateOne} shows that
     \(
       |h_\gamma(x)|
       \leq \sum_{\lambda \in \Lambda}
              \Theta(\gamma^{-1} \lambda) \, \Phi(\lambda^{-1} x)
       \leq \frac{\rel(\Lambda)}{\mu_G(Q)}
            (\maxL \Theta \ast \maxR \Phi) (\gamma^{-1} x)
     \)
     and likewise
     \(
       |h_\gamma (x)|
       \leq \sum_{\lambda \in \Lambda}
              \Theta(\lambda^{-1} \gamma) \, \Phi(x^{-1} \lambda)
       \leq \frac{\rel(\Lambda)}{\mu_G(Q)}
            (\maxL \Phi \ast \maxR \Theta) (x^{-1} \gamma) .
     \)
     Since we have
     $(\maxL \Theta) \ast (\maxR \Phi) + (\maxL \Phi) \ast (\maxR \Theta) \in \WstCw(G)$
     by the relation \eqref{eq:amalgam_convolution}, the result follows.
\end{proof}

\section{Dual frames of molecules}
\label{sec:FramesWithDualMolecules}

This section is devoted to proving the existence of frames of reproducing kernels
with dual systems that also form a system of molecules.

\subsection{Almost tight frames}

The following result provides auxiliary frames that are almost tight.
Note that condition (\Kthree) is assumed here.

\begin{lemma}\label{lem:sufficient_frame}
  Let $\RKHS \subspace L^2 (G)$ be a RKHS satisfying (\Ktwo) and (\Kthree)
  for some admissible weight $w : G \to (0,\infty)$.

  For every $\eps \in (0,1)$, there exists a compact unit neighborhood $U \subset G$ such that:
  If $\Lambda$ is relatively separated and $U$-dense in $G$ and
  $(U_\lambda)_{\lambda \in \Lambda}$ is a disjoint cover associated to $\Lambda$ and $U$,
  then $\big( \mu_G (U_{\lambda})^{1/2} k_{\lambda} \big)_{\lambda \in \Lambda}$
  forms a frame for $\RKHS$ with lower frame bound
  \(
     1 - \eps
  \)
  and upper frame bound
  \(
     1 + \eps
  \).
\end{lemma}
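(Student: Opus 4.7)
The plan is to interpret the squared frame sum
\[
  \sum_{\lambda \in \Lambda} \bigl|\langle f, \mu_G(U_\lambda)^{1/2} k_\lambda \rangle\bigr|^2
  = \sum_{\lambda \in \Lambda} \mu_G(U_\lambda) \, |f(\lambda)|^2
\]
as a Riemann-type approximation of $\|f\|_{L^2}^2$ along the disjoint cover $(U_\lambda)_{\lambda \in \Lambda}$. Since $G = \bigcupdot_{\lambda} U_\lambda$, I can write $\|f\|_{L^2}^2 = \sum_\lambda \int_{U_\lambda} |f(x)|^2 \, d\mu_G(x)$, so
\[
  \sum_\lambda \mu_G(U_\lambda) |f(\lambda)|^2 - \|f\|_{L^2}^2
  = \sum_\lambda \int_{U_\lambda} \bigl( |f(\lambda)|^2 - |f(x)|^2 \bigr) \, d\mu_G(x),
\]
and it suffices to estimate this error by $\eps \|f\|_{L^2}^2$; both frame bounds $1 \pm \eps$ then follow at once.

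For the pointwise control I invoke hypothesis (\Kthree) with $y = \lambda$: for $x \in U_\lambda \subset \lambda U$, I obtain
\[
  \bigl| |f(x)|^2 - |f(\lambda)|^2 \bigr|
  \leq \eta(\lambda^{-1} x) \int_G |f(z)|^2 \, \Theta'(z^{-1} \lambda) \, d\mu_G(z).
\]
Since $\lambda^{-1} x \in U$, the prefactor is at most $\eta_U := \sup_{u \in U} \eta(u)$. To convert the envelope $\Theta'(z^{-1} \lambda)$ into something depending on $z^{-1} x$ (which is essential for integrating over $x$), I arrange $U \subset Q$; then $z^{-1} \lambda = (z^{-1} x)(x^{-1} \lambda) \in (z^{-1} x) Q$ by symmetry of $Q$, so $\Theta'(z^{-1} \lambda) \leq \maxL \Theta'(z^{-1} x)$. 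Substituting, swapping $\sum_\lambda \int_{U_\lambda}$ for $\int_G$ (disjoint cover), and applying Fubini together with left-invariance $\int_G \maxL \Theta'(z^{-1} x) \, d\mu_G(x) = \| \maxL \Theta' \|_{L^1}$, the error is bounded by
\[
  \eta_U \cdot \| \maxL \Theta' \|_{L^1} \cdot \|f\|_{L^2}^2 .
\]
Finiteness of $\| \maxL \Theta' \|_{L^1}$ is guaranteed by $\Theta' \in \Wstw(G)$, since $\maxL \Theta' \leq \maxR \maxL \Theta' \in L_w^1(G) \hookrightarrow L^1(G)$.

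Finally, because $\eta(u) \to 0$ as $u \to e$, given any $\eps \in (0,1)$ I can select a compact unit neighborhood $U \subset Q$ so small that $\eta_U \cdot \| \maxL \Theta' \|_{L^1} \leq \eps$, which delivers the desired frame bounds and completes the proof. The main subtlety is the transfer from $\Theta'(z^{-1} \lambda)$ to $\maxL \Theta'(z^{-1} x)$: without it, the double integral would not close up, since the index $\lambda$ must be absorbed into the $U_\lambda$ integration to exploit the disjoint-cover identity. The constraint $U \subset Q$ handles this painlessly, and imposes no real loss of generality since we are free to shrink $U$.
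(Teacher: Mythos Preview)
Your proof is correct and follows essentially the same approach as the paper's own proof: both interpret the frame sum as a Riemann approximation of $\|f\|_{L^2}^2$ over the disjoint cover, invoke (\Kthree) with $y=\lambda$, replace $\Theta'(z^{-1}\lambda)$ by $\maxL\Theta'(z^{-1}x)$ using $U\subset Q$, and collapse $\sum_\lambda\int_{U_\lambda}$ into $\int_G$ to obtain the error bound $\eta_U\,\|\Theta'\|_{\WL}\,\|f\|_{L^2}^2$. The only cosmetic difference is that the paper organizes the computation by first bounding $\sum_\lambda \mu_G(U_\lambda)\,\Theta'(z^{-1}\lambda)\le\|\Theta'\|_{\WL}$ for fixed $z$, whereas you substitute the maximal function before summing; the two orderings yield the same estimate.
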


\begin{proof}
Let $\eps \in (0,1)$ be arbitrary.
Let $\eta : G \to [0,\infty)$ and $\Theta' \in \WstCw(G)$ be as provided
by the weak uniform continuity property \eqref{eq:WUC}.
Since $\eta(x) \to 0$ as $x \to e$, we can choose a compact unit neighborhood
$U \subset Q \subset G$ such that
\(
  \| \eta \|_{\sup, U}
  := \sup_{x \in U}
       \eta(x)
  \leq \eps / (1 + \| \Theta' \|_{\WL})
  .
\)
With this choice of $U$, let $\Lambda$ and $(U_\lambda)_{\lambda \in \Lambda}$
be as in the statement of the lemma.

Fix $f \in \RKHS$ and $\lambda \in \Lambda$.
Since $U_\lambda \subset \lambda U$, it follows that $\lambda^{-1} x \in U$ for $x \in U_\lambda$.
Thus, \eqref{eq:WUC} yields
\begin{align*}
  \bigg|
    \mu_G (U_\lambda) \, |f(\lambda)|^2
    - \int_{U_{\lambda}} |f(x)|^2
  \; d\mu_G (x) \bigg|
  & \leq \int_{U_\lambda} \big| |f(\lambda)|^2 - |f(x)|^2 \big| \; d \mu_G (x) \\
  & \leq \int_{U_\lambda}
           \eta(\lambda^{-1} x)
           \int_G
             |f(z)|^2 \, \Theta' (z^{-1} \lambda)
           \; d \mu_G (z)
         \; d \mu_G (x) \\
  & \leq \mu_G (U_\lambda) \,
        \| \eta \|_{\sup, U}
         \int_G |f(z) |^2 \, \Theta'(z^{-1} \lambda) \; d\mu_G (z) .
\end{align*}
An application of the triangle inequality gives
\begin{align*}
  \int_{U_{\lambda}} |f(x)|^2 \; d\mu_G (x)
  \leq \mu_G (U_\lambda) \, |f(\lambda)|^2
       + \mu_G (U_\lambda) \,
         \| \eta \|_{\sup, U}
         \int_G
           |f(z)|^2 \, \Theta' (z^{-1} \lambda)
         \; d\mu_G (z) ,
\end{align*}
and summing this inequality over $\lambda \in \Lambda$ yields
\begin{align} \label{eq:triangle_sum}
  \| f \|_{L^2}^2
  \leq \sum_{\lambda \in \Lambda}
         \mu_G (U_{\lambda}) \, |f(\lambda)|^2
       + \| \eta \|_{\sup, U}
         \int_G
           |f(z)|^2 \,
           \sum_{\lambda \in \Lambda}
             \mu_G (U_{\lambda}) \,
           \Theta' (z^{-1} \lambda)
         \; d\mu_G (z).
\end{align}
To further estimate the right-hand side of \eqref{eq:triangle_sum}, fix $z \in G$.
For arbitrary $y \in U_{\lambda} \subset \lambda U$, it follows that
$\lambda^{-1} y \in U \subset Q = Q^{-1}$ and hence
$z^{-1} \lambda = z^{-1} y (\lambda^{-1} y)^{-1} \in z^{-1} y Q$,
which implies that $\Theta' (z^{-1} \lambda) \leq (\maxL \Theta')(z^{-1} y)$,
since $Q$ is open and $\Theta'$ is continuous.
Therefore, for each $z \in G$,
\begin{align*}
  \sum_{\lambda \in \Lambda}
    \mu_G(U_{\lambda}) \, \Theta' (z^{-1} \lambda)
  & = \sum_{\lambda \in \Lambda}
        \int_{U_{\lambda}}
          \Theta' (z^{-1} \lambda)
        \; d \mu_G (y)
  \leq \int_G
         (\maxL \Theta') (z^{-1} y)
       \; d\mu_G (y)
    = \| \Theta' \|_{\WL} .
\end{align*}
Combining this with \eqref{eq:triangle_sum} yields
\(
  \| f \|_{L^2}^2
  \leq \sum_{\lambda \in \Lambda}
         \mu_G (U_{\lambda}) \,
         |f(\lambda)|^2
       + \| \eta \|_{\sup, U}
         \| \Theta' \|_{\WL} \,
         \| f \|_{L^2}^2.
\)
Since $\| \eta \|_{\sup,U} \leq \eps / (1 + \| \Theta' \|_{\WL})$, we thus see that
\[
  (1- \eps) \, \|f \|_{L^2}^2
  \leq \sum_{\lambda \in \Lambda}
         \mu_G (U_{\lambda}) \, |f(\lambda)|^2
  =    \sum_{\lambda \in \Lambda}
         |\langle f, \mu_G(U_\lambda)^{1/2} \, k_\lambda \rangle|^2 ,
\]
which is the desired lower bound.
The upper bound follows similarly.
\end{proof}

\subsection{Dual frames of molecules}
\label{sub:ConstructingFramesWithDualMolecules}

Using the existence of almost tight frames provided by Lemma \ref{lem:sufficient_frame},
we  derive our main result regarding the existence of well-localized dual frames.

\begin{theorem}\label{thm:frame_main1}
  Let $\RKHS \subspace L^2 (G)$ be a RKHS satisfying (\Ktwo) and (\Kthree)
  for some admissible weight $w : G \to (0,\infty)$.
  There exists a compact unit neighborhood $U \subset G$
  such that, for any relatively separated and $U$-dense $\Lambda $ in $ G$,
  the following assertions hold:
  \begin{enumerate}[label=(\roman*)]
  \item  The system $(k_{\lambda})_{\lambda \in \Lambda}$ is a frame for $\RKHS$
         and admits a dual frame $(h_{\lambda})_{\lambda \in \Lambda}$
         of $w$-molecules.

  \item There exists a tight frame $(g_{\lambda})_{\lambda \in \Lambda}$ for $\RKHS$
        which forms a family of $w$-molecules.
  \end{enumerate}
\end{theorem}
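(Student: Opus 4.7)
The plan is to reduce Theorem \ref{thm:frame_main1} to the inversion of a convolution-dominated operator close to the identity, and then apply the local spectral invariance provided by Theorem \ref{thm:integral_eps_inverse}. The auxiliary almost-tight frame is supplied by Lemma \ref{lem:sufficient_frame}: for a unit neighborhood $U$ from that lemma and an associated disjoint cover $(U_\lambda)_{\lambda \in \Lambda}$, the rescaled kernels $g_\lambda := \mu_G(U_\lambda)^{1/2} k_\lambda$ form a frame for $\RKHS$ with bounds $1 \pm \eps$. Lemma \ref{lem:molecule_convintegeral}(iii) identifies its frame operator $S$ with $T_H|_\RKHS$, where $H(x,y) = \sum_\lambda g_\lambda(x) \overline{g_\lambda(y)}$, and Lemma \ref{lem:molecule_convintegeral}(ii) further ensures that $H$ admits the envelope $\Phi_0 := [\maxL \Theta] \ast [\maxR \Theta] \in \WstCw(G)$, which depends only on the envelope $\Theta$ from (\Ktwo) and, crucially, \emph{not} on $U$ or $\Lambda$.

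This $U$-independent envelope breaks the apparent circularity between Theorem \ref{thm:integral_eps_inverse} and Lemma \ref{lem:sufficient_frame}. Accordingly, I would first fix $\delta := 1/2$ and extract $\eps_0 = \eps(\Theta, \Phi_0, \delta, w) \in (0, \delta)$ from Theorem \ref{thm:integral_eps_inverse}; only afterwards invoke Lemma \ref{lem:sufficient_frame} with tolerance $\eps_0$ to select $U \subset Q$. The resulting frame operator satisfies $\|S - \identity_\RKHS\|_{\RKHS \to \RKHS} \leq \eps_0$, so applying Theorem \ref{thm:integral_eps_inverse} to $\phi(z) := 1/z$, holomorphic on $B_\delta(1)$, yields a $w$-localized kernel $H'$ with $S^{-1} = T_{H'}|_\RKHS$. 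The family $(g_\lambda)$ is itself a system of $w$-molecules via (\Ktwo) and the uniform bound $\mu_G(U_\lambda) \leq \mu_G(U)$, so Lemma \ref{lem:molecule_convintegeral}(iv) ensures that the canonical dual $\tilde{g}_\lambda := S^{-1} g_\lambda = T_{H'} g_\lambda$ is a system of $w$-molecules as well.

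To match the formulation of part (i), I set $h_\lambda := \mu_G(U_\lambda)^{1/2} \tilde{g}_\lambda$. The frame expansion $f = \sum \langle f, g_\lambda \rangle \tilde{g}_\lambda$ then rewrites as $f = \sum f(\lambda) h_\lambda$, and self-adjointness of $S^{-1}$ yields the companion expansion $f = \sum \langle f, h_\lambda \rangle k_\lambda$. Since $\mu_G(U_\lambda)^{1/2} \leq \mu_G(U)^{1/2}$, the family $(h_\lambda)$ inherits a common envelope and remains a system of $w$-molecules. To see that $(k_\lambda)$ is itself a frame, the Bessel bound is a direct convolution-domination argument from (\Ktwo), Cauchy--Schwarz, and Lemma \ref{lem:SynthesisOperatorQuantitativeBound}, while the lower frame bound is obtained by pairing the identity $\|f\|^2 = \sum f(\lambda) \, \overline{\langle f, h_\lambda \rangle}$ with Cauchy--Schwarz against the Bessel bound of $(h_\lambda)$ from Lemma \ref{lem:molecule_bessel}.

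Part (ii) is obtained by repeating the construction with $\psi(z) := z^{-1/2}$ in place of $\phi$: the principal square-root branch is holomorphic on $B_\delta(1)$, so Theorem \ref{thm:integral_eps_inverse} furnishes a $w$-localized kernel representing $S^{-1/2}$, and Lemma \ref{lem:molecule_convintegeral}(iv) shows that $g^{\sharp}_\lambda := S^{-1/2} g_\lambda$ is a system of $w$-molecules. This family is Parseval because its frame operator equals $S^{-1/2} S S^{-1/2} = \identity_\RKHS$. The delicate point of the entire argument is the quantitative tracking of the envelope of $H$ in Lemma \ref{lem:molecule_convintegeral}(ii): without an envelope that is independent of $U$ and $\Lambda$, one could not fix the tolerance $\eps_0$ of Theorem \ref{thm:integral_eps_inverse} before choosing the sampling set, and the scheme would collapse.
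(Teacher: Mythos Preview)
Your proposal is correct and follows essentially the same route as the paper: fix the $U$-independent envelope $\Phi_0=\maxL\Theta\ast\maxR\Theta$ via Lemma~\ref{lem:molecule_convintegeral}(ii), extract $\eps_0$ from Theorem~\ref{thm:integral_eps_inverse} for $\phi(z)=z^{-1}$ and $\psi(z)=z^{-1/2}$, then choose $U$ via Lemma~\ref{lem:sufficient_frame}, and finally push the molecule structure through $\frameop^{-1}$ and $\frameop^{-1/2}$ using Lemma~\ref{lem:molecule_convintegeral}(iv). The only cosmetic difference is that the paper deduces the frame property of $(k_\lambda)_{\lambda\in\Lambda}$ directly from the dual-frame identity between two Bessel sequences, whereas you spell out the lower bound via Cauchy--Schwarz; both are equivalent.
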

\begin{proof}
  Let $\Theta \in \Wstw(G)$ be as in \eqref{eq:localization_kernel}
  and set $\widetilde{\Phi} := \maxL \Theta \ast \maxR \Theta$.
  By Theorem~\ref{thm:integral_eps_inverse}, we see that there exists
  $\eps = \eps(\Theta, w) \in (0, 1)$ such that
  if $H : G \times G \to \CC$ is $w$-localized in $\RKHS$ with envelope $\widetilde{\Phi}$
  and if $\| T_H - \identity_{\RKHS} \|_{\RKHS \to \RKHS} \leq \eps$, then there are kernels
  $H_1, H_2 : G \times G \to \CC$ that are $w$-localized in $\RKHS$ and such that
  $T_H^{-1} = T_{H_1}|_{\RKHS}$ and $T_H^{-1/2} = T_{H_2}|_{\RKHS}$,
  where the operators on the left-hand side are defined by the holomorphic functional calculus.
  Next, choose the compact unit neighborhood $U \subset G$
  as provided by Lemma~\ref{lem:sufficient_frame} for this choice of $\eps$,
  and let $\Lambda$ be relatively separated and $U$-dense in $G$.

  By Lemma~\ref{lem:relativelyUdense_disjointunion}, there is a disjoint cover
  $(U_\lambda)_{\lambda \in \Lambda}$ associated to $\Lambda$ and $U$.
  Set $\tau_\lambda := \mu_G (U_\lambda)$ for $\lambda \in \Lambda$.
  Since $\tau_\lambda \leq \mu_G(\lambda U) = \mu_G(U) < \infty$ for all $\lambda \in \Lambda$,
  and by (\Ktwo), it follows that
  $(\tau_\lambda^{1/2} \, k_\lambda)_{\lambda \in \Lambda}$ forms a system of $w$-molecules.
  Furthermore, Lemma~\ref{lem:molecule_convintegeral} shows that the
  (pre)-frame operator $\frameop : \RKHS \to \RKHS$ associated to this family
  is an integral operator $\frameop = T_H |_{\RKHS}$ whose integral kernel
  $H : G \times G \to \CC$ is $w$-localized in $\RKHS$ with envelope $\widetilde{\Phi}$.
  Furthermore, our choice of $U$ (via Lemma~\ref{lem:sufficient_frame}) ensures that
  \({
    -\eps \, \| f \|_{L^2}^2
    \leq \langle (\frameop - \identity_{\RKHS}) f, f \rangle
    \leq \eps \, \| f \|_{L^2}^2
  }\)
  for all $f \in \RKHS$, and hence
  $\| \frameop - \identity_{\RKHS} \|_{\RKHS \to \RKHS} \leq \eps < 1$.
  This implies that
  $\frameop^{-1} = T_{H_1}|_{\RKHS}$ and $\frameop^{-1/2} = T_{H_2}|_{\RKHS}$,
  for suitable integral kernels $H_1, H_2 : G \times G \to \CC$ that are $w$-localized in $\RKHS$.

  With this preparation, we can prove the two statements of the theorem:

  (ii) Part~(iv) of Lemma~\ref{lem:molecule_convintegeral} shows that
  \(
    (g_\lambda)_{\lambda \in \Lambda}
    := \big(
         \frameop^{-1/2} (\tau_\lambda^{1/2} \, k_\lambda)
       \big)_{\lambda \in \Lambda}
  \)
  is a family of $w$-molecules in $\RKHS$.
  By elementary frame theory,
  $(g_\lambda)_{\lambda \in \Lambda}$ forms a \emph{tight} frame for $\RKHS$.

 (i) Recall that $(k_\lambda)_{\lambda \in \Lambda}$
  is a family of $w$-molecules, by~(\Ktwo).
  Thus Lemma~\ref{lem:molecule_bessel} shows that $(k_\lambda)_{\lambda \in \Lambda}$
  is a Bessel sequence in $\RKHS$, and hence so is the family
  \(
    (h_\lambda)_{\lambda \in \Lambda}
    := \big(
         \frameop^{-1} (\tau_\lambda \, k_\lambda)
       \big)_{\lambda \in \Lambda} ,
  \)
  since $\tau_\lambda = \mu_G (U_\lambda) \leq \ \mu_G (U) < \infty$
  for all $\lambda \in \Lambda$.
  Furthermore, since $\frameop^{-1} = T_{H_1}|_{\RKHS}$ for the $w$-localized kernel $H_1$,
  Lemma~\ref{lem:molecule_convintegeral} shows that
  $(\frameop^{-1} k_\lambda)_{\lambda \in \Lambda}$ is a family of $w$-molecules.
  Since $\tau_\lambda \leq \mu_G (U)$ for all $\lambda \in \Lambda$,
  this implies that $(h_\lambda)_{\lambda \in \Lambda}$ is a family of $w$-molecules as well.
  Since
  \[
    f
    = \frameop^{-1} \frameop f
    = \frameop^{-1}
      \Big(
        \sum_{\lambda \in \Lambda}
          \langle f, \tau_\lambda^{1/2} \, k_\lambda \rangle
          \,\, \tau_\lambda^{1/2} \, k_\lambda
      \Big)
    = \frameop^{-1}
      \Big(
        \sum_{\lambda \in \Lambda}
          \langle f, k_\lambda \rangle \, \frameop h_\lambda
      \Big)
    = \sum_{\lambda \in \Lambda}
        \langle f, k_\lambda \rangle \, h_\lambda
  \]
  for all $f \in \RKHS$, it follows that $(k_\lambda)_{\lambda \in \Lambda}$
  and $(h_\lambda)_{\lambda \in \Lambda}$ form a pair of dual frames.
\end{proof}

\subsection{Canonical dual frames of molecules}%
\label{sec:coherent_systems_with_localized_canonical_duals}

Our main result in this section is the following statement showing that if
$\Lambda$ is chosen such that ${\uniformity(\Lambda;U) < 1 + \eps}$
for a sufficiently small unit neighborhood $U \subset G$ and sufficiently small $\eps > 0$,
then the reproducing kernels $(k_\lambda)_{\lambda \in \Lambda}$ form a frame
whose \emph{canonical} dual frame again forms a system of molecules.

\begin{theorem}\label{thm:AbstractCanonicalDualUniformityCondition}
  Let $\RKHS \subspace L^2(G)$ be a RKHS satisfying (\Ktwo) and (\Kthree)
  for some admissible weight $w : G \to (0,\infty)$.
  There is a compact unit neighborhood $U \subset G$ and an $\eps > 0$ such that:
  If $\Lambda$ is relatively separated and $U$-dense in $G$ with uniformity
  $\uniformity(\Lambda;U) < 1 + \eps$, then  $(k_\lambda)_{\lambda \in \Lambda}$
  is a frame for $\RKHS$, and furthermore:
  \begin{enumerate}[label=(\roman*)]
    \item The associated inverse frame operator $\frameop^{-1} : \RKHS \to \RKHS$ coincides
          with an integral operator $ T_{H'}|_{\RKHS} : \RKHS \to \RKHS$
          for a $w$-localized kernel ${H' : G \times G \to \CC}$ in $\RKHS$.
          \vspace{0.1cm}

    \item The \emph{canonical} dual frame $(\frameop^{-1} k_\lambda)_{\lambda \in \Lambda}$
          of $(k_{\lambda})_{\lambda \in \Lambda}$ is a system of $w$-molecules in $\RKHS$.
  \end{enumerate}
\end{theorem}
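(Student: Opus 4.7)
The idea is to apply the local spectral invariance of Theorem~\ref{thm:integral_eps_inverse} directly to the frame operator $\frameop$ of $(k_\lambda)_{\lambda \in \Lambda}$, after rescaling by a scalar which the uniformity hypothesis allows us to decouple from the index $\lambda$. Let $\Theta$ be the envelope from (\Ktwo) and define $\widetilde{\Phi} := 2\,(\maxL\Theta)\ast(\maxR\Theta) \in \Wstw(G)$ via the convolution relation~\eqref{eq:amalgam_convolution}; setting $\delta := 1/2$, Theorem~\ref{thm:integral_eps_inverse} supplies a threshold $\eps' = \eps'(\Theta, \widetilde{\Phi}, \delta, w) > 0$ such that any $w$-localized kernel $H$ in $\RKHS$ with envelope dominated by $\widetilde{\Phi}$ and satisfying $\|T_H - \identity_\RKHS\|_{\RKHS\to\RKHS} \leq \eps'$ has $T_H^{-1} = T_{H_1}|_\RKHS$ for some $w$-localized kernel $H_1$. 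I would fix $\eps := \min\{1, \eps'/3\}$ and apply Lemma~\ref{lem:sufficient_frame} at almost-tightness level $\eps_0 := \eps'/3$ to select the compact unit neighborhood $U \subset G$ claimed by the theorem.

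Given $\Lambda$ satisfying the hypotheses, pick a disjoint cover $(U_\lambda)_{\lambda \in \Lambda}$ witnessing $\uniformity(\Lambda;U) < 1+\eps$, so that $\sup_{\lambda,\lambda'} \mu_G(U_\lambda)/\mu_G(U_{\lambda'}) \leq 1+\eps$. Write $t_\lambda := \mu_G(U_\lambda)$ and $\bar\tau := \sup_\lambda t_\lambda \leq \mu_G(U)$, and consider $\frameop_w f := \sum_\lambda t_\lambda \langle f, k_\lambda\rangle k_\lambda$. Factoring
\[
  \bar\tau \frameop - \frameop_w
  = A^* D A,
  \qquad
  A f := \big(t_\lambda^{1/2}\langle f, k_\lambda\rangle\big)_{\lambda \in \Lambda},
  \qquad
  D := \diag\big(\bar\tau/t_\lambda - 1\big)_{\lambda \in \Lambda},
\]
we have $\|D\|_{\ell^2\to\ell^2} \leq \eps$ and $\|A\|^2 = \|\frameop_w\| \leq 1+\eps_0$, which gives $\|\bar\tau \frameop - \frameop_w\| \leq \eps(1+\eps_0)$. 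Combining with $\|\frameop_w - \identity_\RKHS\| \leq \eps_0$ from Lemma~\ref{lem:sufficient_frame} yields $\|\bar\tau \frameop - \identity_\RKHS\| \leq \eps(1+\eps_0) + \eps_0 \leq \eps'$.

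On the other hand, Lemma~\ref{lem:molecule_convintegeral}(ii), invoked with the constant choice $\tau_\lambda := \bar\tau$ and $C := 1+\eps \leq 2$ (so $\tau_\lambda \leq C t_\lambda$ by uniformity), identifies $\bar\tau \frameop$ with the integral operator $T_{\bar\tau H}|_\RKHS$, where $\bar\tau H(x,y) = \sum_\lambda \bar\tau\, k_\lambda(x)\overline{k_\lambda(y)}$ is $w$-localized in $\RKHS$ with envelope pointwise bounded by $\widetilde{\Phi}$, independently of $\Lambda$ or $\bar\tau$. Theorem~\ref{thm:integral_eps_inverse} applied to $\phi(z) := 1/z$ on $B_\delta(1)$ therefore delivers $(\bar\tau \frameop)^{-1} = T_{H_1}|_\RKHS$ for some $w$-localized $H_1$; hence $\frameop^{-1} = T_{H'}|_\RKHS$ with $H' := \bar\tau H_1$, which is still $w$-localized since $\bar\tau$ is a bounded scalar. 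This proves (i), and (ii) follows immediately by applying Lemma~\ref{lem:molecule_convintegeral}(iv) to the system $(k_\lambda)_{\lambda \in \Lambda}$---itself a family of $w$-molecules by (\Ktwo)---and the $w$-localized kernel $H'$: the canonical dual $(\frameop^{-1} k_\lambda)_{\lambda \in \Lambda} = (T_{H'} k_\lambda)_{\lambda \in \Lambda}$ is a system of $w$-molecules.

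The main obstacle is the coordination of constants, rather than any single deep estimate: the threshold $\eps'$ in Theorem~\ref{thm:integral_eps_inverse} depends on the \emph{pointwise} envelope $\widetilde{\Phi}$, not merely on its norm, so it is essential to pin down a single $\widetilde{\Phi}$ that simultaneously dominates the kernel of $\bar\tau \frameop$ for every admissible $\Lambda$. The two inputs that make this possible are the uniform control $\bar\tau/t_\lambda \leq 1+\eps$, which keeps the multiplicative constant in Lemma~\ref{lem:molecule_convintegeral}(ii) bounded by $2$, and the ceiling $\bar\tau \leq \mu_G(U)$; with these in place, the argument proceeds by the clean chain $\text{uniformity} \rightsquigarrow \|\bar\tau\frameop-\identity\| \text{ small} \rightsquigarrow \frameop^{-1}$ localized $\rightsquigarrow$ canonical dual is molecular.
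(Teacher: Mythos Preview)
Your proposal is correct and follows essentially the same route as the paper: both arguments fix a uniform envelope $\widetilde{\Phi}$ from (\Ktwo) via Lemma~\ref{lem:molecule_convintegeral}(ii), invoke Theorem~\ref{thm:integral_eps_inverse} to obtain a threshold, use Lemma~\ref{lem:sufficient_frame} to select $U$, and then exploit the uniformity hypothesis to replace the variable weights $\mu_G(U_\lambda)$ by a single scalar so that the rescaled frame operator is close to the identity with the right envelope. The only cosmetic differences are that the paper takes $\tau := \mu_G(U_{\lambda_0})$ for a fixed index and estimates $\langle(\widetilde{\frameop}-\frameop_0)f,f\rangle$ directly, whereas you take $\bar\tau := \sup_\lambda \mu_G(U_\lambda)$ and use the factorization $A^\ast D A$; both lead to the same conclusion.
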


\begin{proof}
  Let $U \subset G$ be a compact unit neighborhood and let $\Lambda$ be relatively separated
  and $U$-dense in $G$, with a disjoint cover $(U_{\lambda})_{\lambda \in \Lambda}$
  associated to $\Lambda$ and $U$.
  Let $(\tau_{\lambda})_{\lambda \in \Lambda} \subset [0,\infty)$ satisfy
  $\tau_{\lambda} \leq 2 \, \mu_G (U_{\lambda})$ for all $\lambda \in \Lambda$.
  By Theorem~\ref{thm:integral_eps_inverse} and Lemma~\ref{lem:molecule_convintegeral}(ii),
  there is $\eps = \eps(w,\Theta) \in (0, 1/8)$ such that if the (pre)-frame operator
  $\widetilde{\frameop} : \RKHS \to \RKHS$
  of $(\tau_{\lambda}^{1/2} \, k_{\lambda})_{\lambda \in \Lambda}$
  satisfies ${\| \widetilde{\frameop} - \identity_{\RKHS} \|_{\RKHS \to \RKHS} \leq 4 \eps}$, then
  $\big( \tau_\lambda^{1/2} \, k_\lambda \big)_{\lambda \in \Lambda}$ forms a frame for $\RKHS$
  whose inverse frame operator $\widetilde{\frameop}^{-1}$ is an integral operator
  whose integral kernel is $w$-localized in $\RKHS$.
  In particular, by Lemma~\ref{lem:molecule_convintegeral}(iv), the canonical dual frame of
  $\big( \tau_\lambda^{1/2} \, k_\lambda \big)_{\lambda \in \Lambda}$ forms a system of $w$-molecules.
  The remainder of the proof consists in constructing a suitable compact unit neighborhood
  $U \subset G$ such that if $\Lambda$ satisfies the assumptions of the theorem,
  then one can choose the disjoint cover $(U_\lambda)_{\lambda \in \Lambda}$
  and $\tau \in (0,\infty)$ such that if we set
  $\tau_\lambda = \tau \in (0,\infty)$ for all $\lambda \in \Lambda$,
  then $\tau_\lambda \leq 2 \, \mu_G (U_\lambda)$
  and $\| \widetilde{\frameop} - \identity_{\RKHS} \|_{\RKHS \to \RKHS} \leq 4 \eps$.

  Lemma~\ref{lem:sufficient_frame} yields a compact symmetric unit neighborhood $U \subset Q$
  such that, for any relatively separated and $U$-dense family $\Lambda$ in $G$
  with disjoint cover $(U_\lambda)_{\lambda \in \Lambda}$ associated to $\Lambda$ and $U$,
  the frame operator $\frameop_0 : \RKHS \to \RKHS$ of
  $\big( \mu_G(U_\lambda)^{1/2} \cdot k_\lambda \big)_{\lambda \in \Lambda}$
  satisfies $\| \identity_{\RKHS} - \frameop_0 \|_{\RKHS \to \RKHS} \leq \eps$.
  Let $\Lambda$ be relatively separated and $U$-dense in $G$
  with uniformity ${\uniformity(\Lambda;U) < 1 + \eps}$.
  Then, by definition, there is a cover $G = \bigcupdot_{\lambda \in \Lambda} U_\lambda$
  of measurable sets $U_\lambda \subset \lambda U$
  satisfying ${\frac{\mu_G (U_\lambda)}{\mu_G (U_{\lambda'})} \leq 1 + \eps}$
  for all $\lambda, \lambda' \in \Lambda$.
  Fix $\lambda_0 \in \Lambda$ and define
  $
    \tau_\lambda := \tau := \mu_G (U_{\lambda_0})
    $
    for all $ \lambda \in \Lambda.$
  Note that
  \(
    \tau_\lambda
    = \mu_G (U_{\lambda_0})
    \leq (1 + \eps) \, \mu_G (U_\lambda)
    \leq 2 \, \mu_G(U_\lambda)
  \)
  for all $\lambda \in \Lambda$.
  Similarly, for arbitrary $\lambda \in \Lambda$, it holds that
  $\frac{\mu_G(U_{\lambda_0})}{\mu_G(U_\lambda)} \leq 1+\eps$ and
  $\frac{\mu_G(U_\lambda)}{\mu_G(U_{\lambda_0})} \leq 1+\eps$.
  Thus
  \[
    \frac{\mu_G(U_{\lambda_0})}{\mu_G(U_\lambda)}
    \geq \frac{1}{1+\eps}
    \geq \frac{1 - \eps^2}{1+\eps}
    =    1 - \eps
  \]
  and $\Big| \frac{\mu_G (U_{\lambda_0})}{\mu_G(U_\lambda)} - 1 \Big| \leq \eps$.
  This shows that
  \(
    |\tau - \mu_G (U_\lambda)|
    = \mu_G (U_\lambda)
      \cdot \Big|
              \frac{\mu_G (U_{\lambda_0})}{\mu_G (U_\lambda)}
              - 1
            \Big|
    \leq \eps \cdot \mu_G (U_\lambda) .
  \)
  Since $\eps \, \| \frameop_0 \|_{\RKHS \to \RKHS} \leq \eps (1+\eps)   \leq 2 \eps$,
  it follows therefore that
  \begin{align*}
    \big|
      \langle \widetilde{\frameop} f, f \rangle
      - \langle \frameop_0 f , f \rangle
    \big|
    & = \bigg|
          \sum_{\lambda \in \Lambda}
            \Big(
              \big[ \tau - \mu_G \big( U_\lambda \big) \big]
              \cdot |\langle f, k_{\lambda} \rangle|^2
            \Big)
        \bigg|
     \leq \eps
           \sum_{\lambda \in \Lambda}
             \Big(
               \mu_G(U_\lambda) \cdot
               |\langle f, k_{\lambda} \rangle|^2
             \Big) \\
      &=    \eps \, \langle \frameop_0 f, f \rangle
      \leq 2 \eps \, \| f \|_{L^2}^2
  \end{align*}
  for all $f \in \RKHS$.
  Since $\widetilde{\frameop} - \frameop_0$ is Hermitian, this shows that
  $\| \widetilde{\frameop} - \frameop_0 \|_{\RKHS \to \RKHS} \leq 2 \eps$,
  and hence $\| \identity_{\RKHS} - \widetilde{\frameop} \, \| \leq 3 \eps$.
  By the choice of $\eps$, this implies that the system
  $(\tau^{1/2} \, k_\lambda)_{\lambda \in \Lambda}$ forms a frame of $w$-molecules for $\RKHS$
  whose inverse frame operator $\widetilde{\frameop}^{-1}$ is an integral operator
  whose integral kernel is $w$-localized in $\RKHS$.
  Since the frame operator $\frameop : \RKHS \to \RKHS$ of $(k_\lambda)_{\lambda \in \Lambda}$
  is given by $\frameop = \tau^{-1} \cdot \widetilde{\frameop}$,
  this implies all properties stated in the theorem.
\end{proof}

A family $\Lambda$ in $G$ satisfying the assumptions of the preceding theorem can always be chosen:

\begin{lemma}\label{lem:ExistenceOfSetsWithGoodUniformity}
  Let $U \subset G$ be an arbitrary unit neighborhood and let $\eps > 0$.
  Then there is a relatively separated and $U$-dense set $\Lambda$ with uniformity
  $\uniformity(\Lambda;U) < 1 + \eps$.
\end{lemma}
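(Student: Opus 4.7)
The plan is to obtain a coarse $V$-separated and $U$-dense family $\Lambda_0$ from Lemma~\ref{lem:partition} together with its disjoint cover $(U_\lambda^0)$, and then to refine the cover by splitting each $U_\lambda^0$ into several Borel pieces of nearly equal Haar measure. The key observation is that the uniformity $\uniformity(\Lambda;U)$ in Definition~\ref{def:Uniformity} is an infimum over all valid covers, so one can engineer a near-uniform cover by subdividing cells rather than by controlling the geometry of $\Lambda$ itself.

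First I would fix a symmetric, open, precompact unit neighborhood $V \subset G$ with $V \cdot V \subset U$, which exists by continuity of multiplication. Lemma~\ref{lem:partition}, applied with this $V$ and with ambient neighborhood $V^2$, then yields a $V$-separated and $V^2$-dense (hence $U$-dense) family $\Lambda_0$ together with a disjoint cover $(U_\lambda^0)_{\lambda \in \Lambda_0}$ satisfying $\lambda V \subset U_\lambda^0 \subset \lambda V^2 \subset \lambda U$, so that $\alpha_\lambda := \mu_G(U_\lambda^0) \in [\mu_G(V), \mu_G(V^2)]$. If $G$ is discrete, one may simply take $\Lambda := G$ with singleton cover $U_\lambda := \{\lambda\}$, yielding uniformity $1$ directly; so I assume henceforth that $G$ is non-discrete, in which case $\mu_G$ is atomless on Borel sets.

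Next I would pick $\tau$ with $0 < \tau < \eps \cdot \mu_G(V)$, set $N_\lambda := \lceil \alpha_\lambda/\tau \rceil$ for each $\lambda \in \Lambda_0$, and use atomlessness to partition $U_\lambda^0$ into $N_\lambda$ Borel pieces $U_\lambda^{(1)}, \ldots, U_\lambda^{(N_\lambda)}$ each of measure $\alpha_\lambda/N_\lambda$. A short calculation yields $\alpha_\lambda/N_\lambda \in \big(\tau/(1+\tau/\mu_G(V)),\,\tau\big]$, so the ratio of largest-to-smallest cell measure in the refined cover is at most $1 + \tau/\mu_G(V) < 1 + \eps$. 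I would then let $\Lambda$ be the family in which each $\lambda \in \Lambda_0$ appears with multiplicity $N_\lambda$ (the paper's convention explicitly permits repetitions), paired with the corresponding refined cells as $U_{(\lambda,i)} := U_\lambda^{(i)}$. Each cell satisfies $U_{(\lambda,i)} \subset \lambda U$, so the cover is associated to $\Lambda$ and $U$; since it partitions $G$, the family $\Lambda$ is $U$-dense, and the uniform bound $N_\lambda \leq \mu_G(V^2)/\tau + 1$ together with the relative separation of $\Lambda_0$ yields the relative separation of $\Lambda$. This refined cover then witnesses $\uniformity(\Lambda;U) \leq 1 + \tau/\mu_G(V) < 1 + \eps$.

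The main conceptual point---and essentially the only nontrivial step---is recognizing that uniformity is an infimum over covers, so the right move is to refine the cover rather than to constrain $\Lambda$ directly. Once this is seen, the argument is routine: the quantization error $\tau/\mu_G(V)$ is driven below $\eps$ by choosing $\tau$ small, while the finiteness of $N_\lambda$ (and hence relative separation of $\Lambda$) follows from $\alpha_\lambda \leq \mu_G(V^2)$. The discrete case, where atomlessness fails, is treated separately by the trivial singleton cover.
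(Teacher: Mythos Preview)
Your proof is correct and follows the same overall strategy as the paper: start from a coarse disjoint cover with cells of comparable measure, then subdivide each cell into equal-measure Borel pieces using atomlessness of Haar measure, so that the refined cover has nearly constant cell measure. The discrete case is handled identically.

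The one genuine tactical difference is how you index the refined cells. The paper selects a \emph{distinct} point $x_\lambda^{(\ell)}$ from each piece $W_\lambda^{(\ell)}$ and must then verify $W_\lambda^{(\ell)} \subset x_\lambda^{(\ell)} U$; this is why it needs the stronger nesting $V^4 \subset U$ (so that $W_\lambda^{(\ell)} \subset \lambda W \subset x_\lambda^{(\ell)} W W \subset x_\lambda^{(\ell)} U$). You instead repeat each coarse node $\lambda$ with multiplicity $N_\lambda$, so the containment $U_\lambda^{(i)} \subset U_\lambda^0 \subset \lambda V^2 \subset \lambda U$ is automatic and only $V^2 \subset U$ is required. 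Your quantization via a target measure $\tau$ is equivalent to the paper's scale factor $N$; both give a cell-measure ratio bounded by $1+\eps$.

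The only caveat is terminological: the lemma statement asks for a \emph{set}, whereas your $\Lambda$ is a family with genuine repetitions. The paper's construction yields pairwise distinct points (and explicitly notes this). For the downstream application to Theorem~\ref{thm:AbstractCanonicalDualUniformityCondition} this is immaterial, since that theorem and Definition~\ref{def:Uniformity} are phrased for families; but if a literal set is required you would need to pick a point from each cell as the paper does, at the cost of the stronger condition $V^4 \subset U$.
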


\begin{proof}
  First, consider the special case that $G$ is a discrete group, with counting measure $\mu_G$.
  Choose $\Lambda = G$ and $U_\lambda := \{\lambda\}$ for $\lambda \in \Lambda$,
  noting that $U_\lambda \subset \lambda U$, since $U$ contains the neutral element.
  Since $G$ is discrete and $Q \subset G$ is precompact, $Q$ is finite.
  Thus, $\Lambda$ is relatively separated, since
  ${\rel(\Lambda) = \sup_{x \in G} \# (\Lambda \cap xQ) \leq \#Q < \infty}$.
  Clearly, $G = \bigcupdot_{\lambda \in \Lambda} U_\lambda$.
  Finally, $\mu_G (U_\lambda) = \mu_G (\{e\})$ for all $\lambda \in \Lambda$, from which
  it easily follows that $\uniformity(\Lambda;U) \leq 1 $.

  In the remainder of the proof, assume that $G$ is non-discrete.
  Choose a compact symmetric unit-neighborhood $V \subset G$ satisfying $V V V V \subset U$,
  and set $W := V V$.
  By Lemma~\ref{lem:partition}, there is a set $\Lambda_0 \subset G$
  which is $V$-separated and $W$-dense and such
  that there is a partition $G = \bigcupdot_{\lambda \in \Lambda_0} W_\lambda$
  into relatively compact Borel sets $W_\lambda$ satisfying
  $\lambda V \subset W_\lambda \subset \lambda W$ for all $\lambda \in \Lambda_0$.

  Fix $N \in \N$ with $N \geq 10$ and $\frac{1}{N-1} \leq \frac{\eps}{2}$. Define
  \[
    N_\lambda := \bigg\lfloor \frac{\mu_G (W_\lambda)}{\mu_G (V)} \cdot N \bigg\rfloor
    \qquad \text{for all} \quad \lambda \in \Lambda_0 \, .
  \]
  Then $N_\lambda \geq N \geq 10$ for all $\lambda \in \Lambda_0$.
  More precisely, it holds that
  \begin{equation}
    \frac{\mu_G(W_\lambda)}{\mu_G (V)} \cdot (N - 1)
    \leq \frac{\mu_G(W_\lambda)}{\mu_G (V)} \cdot N - 1
    \leq N_\lambda
    \leq \frac{\mu_G (W_\lambda)}{\mu_G(V)} \cdot N
    \qquad \text{for all} \quad \lambda \in \Lambda_0 \, .
    \label{eq:FactorEstimate}
  \end{equation}
  By iteratively applying Lemma~\ref{lem:nondiscrete_meanvalue},
  it follows that for each $\lambda \in \Lambda_0$ there is a partition
  $W_\lambda = \bigcupdot_{\ell=1}^{N_\lambda} W_\lambda^{(\ell)}$ into measurable sets satisfying
  $\mu_G \big( W_\lambda^{(\ell)} \big) = \frac{\mu_G (W_\lambda)}{N_\lambda} > 0$;
  in particular, $W_\lambda^{(\ell)} \neq \emptyset$.
  For each $\lambda \in \Lambda_0$ and $\ell \in \{1, \dots, N_\lambda\}$, choose
  $x_\lambda^{(\ell)} \in W_\lambda^{(\ell)}$. Define
  \[
    I := \big\{
           (\lambda,\ell)
           \colon
           \lambda \in \Lambda_0 , \; \ell \in \{1, \dots, N_\lambda\}
         \big\}
    \quad \text{and} \quad
    \Lambda
    := \big\{
         x_\lambda^{(\ell)}
         \colon
         (\lambda,\ell) \in I
       \big\} .
  \]
  In the following, we show that $\Lambda$ satisfies the required properties.

  To see that $\Lambda$ is relatively separated, note that there are $M \in \N$ and
  $y_1, \dots, y_M \in G$ such that
  $Q \, W^{-1} \subset \bigcup_{t=1}^M y_t Q \vphantom{\rule[-0.18cm]{0cm}{0.5cm}}$,
  since $Q$ is a unit-neighborhood and $Q \, W^{-1} \subset G$ is relatively compact.
  Since $x_\lambda^{(\ell)} \in W_\lambda^{(\ell)} \subset W_\lambda \subset \lambda W$,
  it follows that if $x \in G$ is arbitrary and $x_\lambda^{(\ell)} \in x Q$,
  then $\lambda W \cap x Q \neq \emptyset$, and hence
  $\lambda \in x Q \, W^{-1} \subset \bigcup_{t=1}^M x y_t Q$.
  Therefore, for any $x \in G$,
  \[
    \sum_{\lambda \in \Lambda}
      \indicator_{x Q} (\lambda)
    \leq \sum_{(\lambda,\ell) \in I}
           \indicator_{x Q} \big( x_\lambda^{(\ell)} \big)
    \leq \sum_{\lambda \in \Lambda_0}
         \bigg[
           N_\lambda
           \sum_{t=1}^M
             \indicator_{x y_t Q} (\lambda)
         \bigg]
    \leq M \, N \cdot \frac{\mu_G (W)}{\mu_G (V)} \cdot \rel (\Lambda) ,
  \]
  where the last step follows from the right-most inequality in \eqref{eq:FactorEstimate}.

  Note that $G = \bigcupdot_{(\lambda,\ell) \in I} W_\lambda^{(\ell)}$,
  which in particular implies that the $x_\lambda^{(\ell)}$ are pairwise distinct.
  Furthermore, since
  $x_\lambda^{(\ell)} \in W_\lambda^{(\ell)} \subset W_\lambda \subset \lambda W$,
  it follows that $\lambda \in x_\lambda^{(\ell)} W$ by symmetry of $W$, and thus
  \(
    W_\lambda^{(\ell)}
    \subset \lambda W
    \subset x_\lambda^{(\ell)} WW
    \subset x_\lambda^{(\ell)} U
  \)
  for all $(\lambda, \ell) \in I$.
  Thus, $\Lambda$ is $U$-dense in $G$.

  Lastly, note that, for arbitrary $(\lambda,\ell), (\theta,k) \in I$,
  \begin{align*}
    \frac{\mu_G (W_\lambda^{(\ell)})}{\mu_G (W_{\theta}^{(k)})}
      = \frac{\mu_G (W_\lambda)}{\mu_G (W_\theta)} \cdot N_\theta \cdot N_\lambda^{-1}
    &   \leq \frac{\mu_G (W_\lambda)}{\mu_G (W_\theta)}
             \cdot \frac{\mu_G(W_\theta)}{\mu_G(V)} \, N
             \cdot \frac{\mu_G(V)}{\mu_G(W_\lambda)} \, (N-1)^{-1} \\
    & =    \frac{N}{N-1}
      =    1 + \frac{1}{N-1}
      \leq 1 + \frac{\eps}{2} ,
  \end{align*}
  where the first inequality follows from \eqref{eq:FactorEstimate}.
  Since $G = \bigcupdot_{(\lambda,\ell) \in I} W_\lambda^{(\ell)}$
  and $W_\lambda^{(\ell)} \subset x_\lambda^{(\ell)} U$,
  this implies $\uniformity(\Lambda;U) \leq 1 + \frac{\eps}{2} $, as desired.
\end{proof}

\subsection{Proofs of the theorems in the introduction.}
\label{sec_proofs}

Theorem~\ref{thm:frame_main1_intro} corresponds to Part~(i) of Theorem~\ref{thm:frame_main1},
and Theorem~\ref{thm:frame_main2_intro} is a consequence
of Theorem~\ref{thm:AbstractCanonicalDualUniformityCondition}.
Lastly, the statement of Theorem~\ref{thm:exist1_intro} on the canonical dual frame
follows from Theorem~\ref{thm:AbstractCanonicalDualUniformityCondition}
and Lemma~\ref{lem:ExistenceOfSetsWithGoodUniformity}, whereas the existence of tight frames
is proven in part (ii) of Theorem~\ref{thm:frame_main1}.

\section{Dual Riesz sequences of molecules}
\label{sec:RieszSequencesWithDualMolecules}

This section provides a proof of the main results on Riesz sequences.

\subsection{Almost orthogonal Riesz sequences}

We start with the construction of an auxiliary system of molecules
that forms an ``almost orthogonal'' Riesz sequence.

\begin{lemma}\label{lem:AlmostTightRieszSequenceExistence}
  Let $\RKHS \subspace L^2 (G)$ be a RKHS satisfying (\Kone) and (\Ktwo)
  for some admissible weight $w : G \to (0,\infty)$.

  For every $\eps \in (0,1)$, there exists a compact set $K \subset G$ such that for each
  \mbox{$K$-separated} family
  $\Lambda $ in $ G$, the system $(\normalized{k_{\lambda}})_{\lambda \in \Lambda}$
  consisting of normalized kernels $\normalized{k_x} := k_x / \| k_x \|_{L^2}$ forms
  a Riesz sequence in $\RKHS$ with lower Riesz bound $(1-\eps)^2$ and upper Riesz bound $(1+\eps)^2$.
\end{lemma}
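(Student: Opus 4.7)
The plan is to show that the Gramian $\gramian$ of $(\normalized{k_\lambda})_{\lambda \in \Lambda}$ is close to the identity on $\ell^2(\Lambda)$. Since $\gramian$ is Hermitian and positive, a bound $\|\gramian - \identity\|_{\ell^2 \to \ell^2} \leq 2\eps - \eps^2$ forces $(1-\eps)^2 \identity \leq \gramian \leq (1+\eps)^2 \identity$, which is exactly the Riesz sequence condition with the claimed bounds. Normalization immediately gives $\gramian_{\lambda, \lambda} = 1$, while for $\lambda \neq \lambda'$ the lower bound from (\Kone) on the diagonal of $k$ and the envelope estimate from (\Ktwo) combine to yield
\[
  |\gramian_{\lambda, \lambda'}|
  = \frac{|k(\lambda, \lambda')|}{\|k_\lambda\|_{L^2} \|k_{\lambda'}\|_{L^2}}
  \leq \frac{\envker((\lambda')^{-1} \lambda)}{\lowdiag} .
\]

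I will take $K$ to be a symmetric compact unit neighborhood containing $Q$. Under this choice, any $K$-separated $\Lambda$ satisfies $\rel(\Lambda) \leq 1$, since two distinct points of $\Lambda$ in a common $xQ$ would produce $\lambda^{-1} \lambda' \in Q^{-1} Q \subset K^2$, contradicting $K$-separation; moreover, for $\lambda \neq \lambda'$ we have $(\lambda')^{-1} \lambda \notin K^2$. Invoking Urysohn's lemma in the locally compact Hausdorff group $G$, I construct a continuous $\chi_K : G \to [0,1]$ that vanishes on $K$ and equals $1$ on $G \setminus K^2$; this is possible because $K \subset \mathrm{int}(K^2)$ for a unit neighborhood $K$. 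Then $\envker \, \chi_K$ is continuous, non-negative, dominates $\envker \cdot \indicator_{G \setminus K^2}$, and vanishes at $e$. Using Hermitian symmetry $|\gramian_{\lambda, \lambda'}| = |\gramian_{\lambda', \lambda}|$ to express the envelope as $\envker(\lambda^{-1} \lambda')$, combining the Hermitian-matrix Schur estimate (maximal row-sum bound via Gershgorin) with Lemma~\ref{lem:SynthesisOperatorQuantitativeBound}\,\eqref{eq:StandardEstimateTwo} yields
\[
  \|\gramian - \identity\|_{\ell^2 \to \ell^2}
  \leq \sup_{\lambda \in \Lambda}
         \sum_{\lambda' \in \Lambda}
           \frac{(\envker \, \chi_K)(\lambda^{-1} \lambda')}{\lowdiag}
  \leq \frac{\rel(\Lambda)}{\lowdiag \, \mu_G(Q)} \, \|\envker \, \chi_K\|_{\WL}
  \leq \frac{\|\envker \, \chi_K\|_{\WL}}{\lowdiag \, \mu_G(Q)} .
\]

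The remaining step is to show $\|\envker \, \chi_K\|_{\WL} \to 0$ as $K$ exhausts $G$ along any nested sequence of compact unit neighborhoods. Since $\envker \in \Wstw(G)$ and $w \geq 1$, one has $\maxL \envker \in L^1(G)$; the integrand $\maxL(\envker \, \chi_K) \leq \maxL \envker$ is dominated, and for fixed $x$ it vanishes as soon as $xQ \subset K$ (because then $\chi_K \equiv 0$ on $xQ$), so the dominated convergence theorem applies. Choosing $K$ large enough to make the right-hand side above at most $2\eps - \eps^2$ completes the proof. The principal technical obstacle is arranging for a continuous dominating function to appear at each step, since Lemma~\ref{lem:SynthesisOperatorQuantitativeBound} requires continuity; this is precisely the role of the Urysohn cutoff $\chi_K$ in place of the raw indicator $\indicator_{G \setminus K^2}$.
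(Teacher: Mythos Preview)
Your argument is correct and takes a genuinely different route from the paper's proof.

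The paper proceeds by truncation and perturbation: it replaces each $\normalized{k_\lambda}$ by $g_\lambda := \normalized{k_\lambda}\cdot\indicator_{\lambda K}$, observes that the $g_\lambda$ are orthogonal because the sets $\lambda K$ are pairwise disjoint, verifies via a tail estimate on $\Theta$ that $\|g_\lambda\|_{L^2}$ lies in $[1-\eps/2,1+\eps/2]$, and finally bounds $\big\|\sum c_\lambda(\normalized{k_\lambda}-g_\lambda)\big\|_{L^2}$ by $\tfrac{\eps}{2}\|c\|_{\ell^2}$ using the operator bound for $D_{\Theta_\varphi,\Lambda}$ from Lemma~\ref{lem:SynthesisOperatorQuantitativeBound}. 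In contrast, you exploit the reproducing identity $\langle k_{\lambda'},k_\lambda\rangle = k(\lambda,\lambda')$ to read off the Gramian entries directly, bound the off-diagonal part pointwise by $\alpha^{-1}\Theta((\lambda')^{-1}\lambda)$, and then invoke the Schur test together with the scalar summation estimate~\eqref{eq:StandardEstimateTwo}. The Urysohn cutoff $\chi_K$ is a clean device to keep the envelope continuous so that Lemma~\ref{lem:SynthesisOperatorQuantitativeBound} applies.

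Your approach is shorter and more transparent; it uses the RKHS structure (the Gramian entries \emph{are} kernel values) in an essential way. The paper's argument is a bit more robust in that it only needs the envelope bound $|k_\lambda(x)|\leq\Theta(\lambda^{-1}x)$ on the \emph{functions} $k_\lambda$, not the reproducing identity for their mutual inner products, and would therefore transfer to settings where one has molecule-type bounds without a reproducing kernel. In the present RKHS context, however, your Gramian argument is the more economical one.
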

\begin{proof}
  Note that if $\Theta$ is replaced by $\Theta_0 (x) := \min \{ \Theta(x), \Theta(x^{-1}) \}$,
  then~\eqref{eq:localization_kernel} still holds.
  Hence, it may be assumed that the envelope $\Theta$ is symmetric.
  The proof proceeds in three steps:

  \medskip{}

  \textbf{Step 1}
  \emph{(Estimating $\|\normalized{k_{\lambda}} \cdot \indicator_{\lambda K} \|_{L^2}$).}
  Since $G$ is $\sigma$-compact, there is an increasing sequence $(K_n)_{n \in \N}$ of compact
  sets $K_n \subset G$ such that $G = \bigcup_{n \in \N} K_n$.
  Since ${\Theta \in L^2(G)}$
  and $\indicator_{K_n^c} \cdot \Theta \to 0$ pointwise as $n \to \infty$ with
  $|\indicator_{K_n^c} \cdot \Theta|^2 \leq |\Theta|^2 \in L^1(G)$,
  the dominated convergence theorem yields that there exists $n_0 \in \N$ such that
  $\| \indicator_{K_{n_0}^c} \cdot \Theta \|_{L^2} \leq \lowdiag^{1/2} \cdot \eps / 2$,
  where $\lowdiag > 0$ is as in \eqref{eq:diagonal_kernel}.

  The estimate
  \(
    |k_\lambda (x)|
    \leq \Theta(\lambda^{-1} x)
    =    L_\lambda \Theta (x)
  \)
  yields, for any $\lambda \in G$ and any measurable set $K \subset G$
  with $K \supset K_{n_0}$, that
  \begin{align*}
    \big|
      \| \normalized{k_\lambda} \cdot \indicator_{\lambda K} \|_{L^2}^2
      - 1
    \big|
    & = \| k_\lambda \|_{L^2}^{-2} \cdot
        \big|
          \| k_\lambda \|_{L^2}^2
          - \| k_\lambda \cdot \mathds{1}_{\lambda K} \|_{L^2}^2
        \big|
      =    \| k_\lambda \|_{L^2}^{-2}
           \cdot \| k_\lambda \cdot \indicator_{(\lambda K)^c} \|_{L^2}^2 \\
    & \leq \| k_\lambda \|_{L^2}^{-2}
           \cdot \big\| (L_\lambda \Theta) \cdot (L_\lambda \indicator_{K_{n_0}^c}) \big\|_{L^2}^2
     =    \| k_\lambda \|_{L^2}^{-2} \cdot \| \Theta \cdot \indicator_{K_{n_0}^c} \|_{L^2}^2
     \leq  \bigg(\frac{\eps}{2}\bigg)^2 .
  \end{align*}
  Since $\eps < 1$, it follows that
  \[
    \Big( 1 - \frac{\eps}{2} \Big)^2
    \leq 1 - \Big( \frac{\eps}{2} \Big)^2
    \leq \| \normalized{k_\lambda} \cdot \indicator_{\lambda K}\|_{L^2}^2
    \leq 1 + \Big( \frac{\eps}{2} \Big)^2
    \leq \Big( 1 + \frac{\eps}{2} \Big)^2,
  \]
  and hence
$
    1 - \frac{\eps}{2}
    \leq \big\|\, \normalized{k_\lambda} \cdot \indicator_{\lambda K} \big\|_{L^2}
    \leq 1 + \frac{\eps}{2}
    $
   for $\lambda \in G$ and $K \subset G$ measurable with $K \supset K_{n_0}$.

  \medskip{}

  \textbf{Step 2}
  \emph{(Construction of a compact set $K_0$).}
  Let $(K_n)_{n \in \N}$ be the family from Step~1, and define
  $\widetilde{K_n} := (K_n \overline{Q})^{-1}$.
  Note because of $\Theta = \Theta^{\vee}$ that if
  \[
    0 \neq \maxL \big( (\Theta \cdot \indicator_{\widetilde{K}_n^c} )^{\vee} \big) (x)
      =    \big\| (\envker \cdot \indicator_{\widetilde{K}_n^c})^\vee \big\|_{L^\infty (x Q)}
      =    \big\| \envker \cdot \indicator_{(K_n \overline{Q})^c} \big\|_{L^\infty (x Q)},
  \]
  then $\emptyset \neq (K_n \overline{Q})^c \cap x Q$.
  Hence, there is some $q \in Q$ satisfying ${x q \in (K_n \overline{Q})^c}$,
  whence $x \notin K_n$.
  Thus,
  \(
    0 \leq \maxL \big( (\Theta \cdot \indicator_{\widetilde{K}_n^c})^{\vee} \big)
      \leq \indicator_{K_n^c} \cdot \maxL \envker
      \to 0
  \)
  pointwise as $n \to \infty$.
  By the dominated convergence theorem,
  it therefore follows that
  \(
    \big\| \big( \Theta \cdot \indicator_{\widetilde{K}_n^c} \big)^\vee \big\|_{\WL}
    \to 0
  \)
  as $n \to \infty$.
  Hence, there is some $n' \in \N$ such that if  $K_0 := \widetilde{K}_{n'}$, then
  \(
    \| \Theta \|_{L^1}
     \big\| (\Theta \cdot \indicator_{K_0^c})^\vee \big\|_{\WL}
    < \lowdiag \cdot \mu_G (Q) \cdot \big( \eps / 2 \big)^2 .
  \)

  \medskip{}

  \textbf{Step 3} \emph{(Completing the proof).}
  With $K_{n_0}$ and $K_0$ as above, choose $\varphi \in C_c (G)$
  with ${0 \leq \varphi \leq 1}$ and $\varphi \equiv 1$ on $K_0$.
  Let $K_0 ' := \supp \varphi$ and define
  $K := \overline{Q} \cup K_{n_0} \cup K_0 '$
  and ${\Theta_\varphi := \Theta \cdot (1 - \varphi)}$,
  noting that $\Theta_{\varphi} \leq \Theta \cdot \indicator_{K_0^c}$ and hence
  \begin{equation}
    \| \Theta_\varphi \|_{L^1}  \| \Theta_\varphi^{\vee} \|_{\WL}
    \leq \| \Theta \|_{L^1}  \big\| (\Theta \cdot \indicator_{K_0^c})^{\vee} \big\|_{\WL}
    <    \lowdiag \cdot \mu_G(Q) \cdot \big( \eps / 2 \big)^2 .
    \label{eq:AlmostOrthogonalRieszSequenceMainStep}
  \end{equation}
  Let $\Lambda$ be $K$-separated in $G$
  and set $g_\lambda := \normalized{k_\lambda} \cdot \indicator_{\lambda K}$
  for $\lambda \in \Lambda$.
  Note that if
  $x \in (\lambda K)^c$, then ${\lambda^{-1} x \notin K \supset K_0 ' = \supp \varphi}$
  and therefore $1 - \varphi(\lambda^{-1} x) = 1$, yielding that
  $\indicator_{(\lambda K)^c} \leq L_\lambda (1 - \varphi)$, and
  \({
    |\normalized{k_\lambda} - g_\lambda|
    = \| k_\lambda \|_{L^2}^{-1} \cdot |k_\lambda| \cdot \indicator_{(\lambda K)^c}
    \leq \lowdiag^{-1/2} \cdot L_\lambda \Theta_\varphi
  }\).
  Therefore, if $c = (c_\lambda)_{\lambda \in \Lambda} \in \CC^\Lambda$ is finitely supported, then
   \begin{align*}
    \bigg|
      \Big\|
        \sum_{\lambda \in \Lambda}
          c_\lambda \, \widetilde{k_\lambda}
      \Big\|_{L^2}
      - \Big\|
          \sum_{\lambda \in \Lambda}
            c_\lambda \, g_\lambda
        \Big\|_{L^2}
    \bigg|
    & \leq \Big\|
             \sum_{\lambda \in \Lambda}
               c_\lambda \cdot (\, \normalized{k_\lambda} - g_\lambda \,)
           \Big\|_{L^2}
      \leq \lowdiag^{-1/2} \, \Big\|
                                \sum_{\lambda \in \Lambda}
                                  |c_\lambda| \, L_\lambda \Theta_\varphi
                              \Big\|_{L^2}.
  \end{align*}
  To estimate this further, note that since $\Lambda$ is $K$-separated in $G$
  and $K \supset Q = Q^{-1}$, it follows that $\rel(\Lambda) \leq 1$.
  Therefore, Equation~\eqref{eq:AlmostOrthogonalRieszSequenceMainStep}
  and Lemma~\ref{lem:SynthesisOperatorQuantitativeBound} imply that
  \begin{align*}
    \Big\|
      \sum_{\lambda \in \Lambda}
        |c_\lambda| \, L_\lambda \Theta_\varphi
    \Big\|^2_{L^2}
    =
      \Big\|
        D_{\Theta_\varphi, \Lambda} \, (|c_\lambda|)_{\lambda \in \Lambda}
      \Big\|_{L^2}^2
    & \leq
           \frac{1}{\mu_G (Q)} \,
            \| \Theta_\varphi^{\vee} \|_{\WL} \,
            \| \Theta_\varphi \|_{L^1}
            \| c \|_{\ell^2}^2 \!
      \leq \lowdiag \cdot
            \big( \eps / 2 \big)^2 \cdot
            \| c \|_{\ell^2}^2
    .
  \end{align*}
  Hence,
  \[
    \bigg|
      \Big\|
        \sum_{\lambda \in \Lambda}
          c_\lambda \, \widetilde{k_\lambda}
      \Big\|_{L^2}
      - \Big\|
          \sum_{\lambda \in \Lambda}
            c_\lambda \, g_\lambda
        \Big\|_{L^2}
    \bigg|
    \leq \frac{\eps}{2} \cdot \| c \|_{\ell^2}.
  \]
  On the other hand, since $(\lambda K)_{\lambda \in \Lambda}$ is pairwise disjoint, the family
  \(
    (g_\lambda)_{\lambda \in \Lambda}
    = (\normalized{k_\lambda} \cdot \indicator_{\lambda K})_{\lambda \in \Lambda}
  \)
  is orthogonal.
  This, in combination with $
    1 - \frac{\eps}{2}
    \leq \big\|\, \normalized{k_\lambda} \cdot \indicator_{\lambda K} \big\|_{L^2}
    \leq 1 + \frac{\eps}{2}
    $ from Step~1, yields
  \[
    \Big( 1 - \frac{\eps}{2} \Big) \cdot \| c \|_{\ell^2}
    \leq \Big\| \sum_{\lambda \in \Lambda} c_\lambda \, g_\lambda \Big\|_{L^2}
    \leq \Big( 1 + \frac{\eps}{2} \Big) \cdot \| c \|_{\ell^2} .
  \]
  Combining the obtained estimates with the triangle inequality gives
  \[
    \big( 1 - \eps \big) \cdot \| c \|_{\ell^2}
    \leq \bigg\|
           \sum_{\lambda \in \Lambda}
             c_\lambda \, \normalized{k_\lambda}
         \bigg\|_{L^2}
    \leq \big( 1 + \eps \big) \cdot \| c \|_{\ell^2}
  \]
  for every finitely supported sequence $c \in \CC^{\Lambda}$, as desired.
\end{proof}

\subsection{Biorthogonal systems of molecules}
\label{sub:RieszSequenceDualMolecules}

Using the auxiliary Riesz sequence constructed in Lemma~\ref{lem:AlmostTightRieszSequenceExistence},
we now prove the existence of Riesz sequences whose biorthogonal
system forms a family of molecules.
The following theorem corresponds to Theorem~\ref{thm:riesz_main2_intro} of the introduction.

\begin{theorem}\label{thm:riesz_main}
  Let $\RKHS \subspace L^2 (G)$ be a RKHS
  satisfying (\Kone) and (\Ktwo) for some admissible weight $w : G \to (0, \infty)$.
  There exists a compact unit neighborhood $K \subset G$ such that,
  for every $K$-separated family $\Lambda$ in $G$, the following assertions hold:
  \begin{enumerate}[label=(\roman*)]
  \item The family $(k_{\lambda} )_{\lambda \in \Lambda}$ is a Riesz sequence in $\RKHS$,
        and its unique biorthogonal system
        \({
          (h_{\lambda})_{\lambda \in \Lambda}
          \subset \overline{\mathrm{span}} \{ k_\lambda \colon \lambda \in \Lambda \}
        }\)
        is a family of $w$-molecules.
        \vspace{0.1cm}

  \item There exists an orthonormal sequence $(g_{\lambda})_{\lambda \in \Lambda}$
        in $\overline{\mathrm{span}} \{ k_\lambda \colon \lambda \in \Lambda \}$
        consisting of $w$-molecules.
\end{enumerate}
\end{theorem}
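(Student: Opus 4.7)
The plan is to reduce everything to a convolution-dominated matrix calculation via the Gramian of the normalized reproducing kernels $\normalized{k_\lambda} := k_\lambda / \|k_\lambda\|_{L^2}$, combined with the local spectral invariance of convolution-dominated matrices in Theorem~\ref{thm:invertibility_convdom_matrix}. Conditions (\Kone) and (\Ktwo) together force $\|k_\lambda\|_{L^2} \in [\lowdiag^{1/2}, \updiag^{1/2}]$ and show that $(\normalized{k_\lambda})_{\lambda \in \Lambda}$ is a system of $w$-molecules with envelope $\lowdiag^{-1/2} \Theta$. Hence, by Lemma~\ref{lem:molecule_convmatrix}(i), the Gramian $\gramian = (\langle \normalized{k_{\lambda'}}, \normalized{k_\lambda} \rangle)_{\lambda, \lambda' \in \Lambda}$ is convolution-dominated with a fixed envelope $\Phi_0 := \lowdiag^{-1}(\Theta \ast \Theta) \in \Wstw(G)$ that is independent of $\Lambda$.

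To pin down the compact neighborhood $K$, I first apply Theorem~\ref{thm:invertibility_convdom_matrix} with the envelope $\Phi_0$, with $\delta = 1/2$, and with the uniform separation bound $R_0 := \mu_G(QQ)/\mu_G(Q)$ (which controls $\rel(\Lambda)$ for every $K$-separated $\Lambda$ as soon as $K \supset \overline{Q}$). This yields an $\eps_0 \in (0, 1/2)$ such that, for any $M \in \goodMatrices(\Lambda)$ with envelope $\Phi_0$ satisfying $\|M - \identity_{\ell^2(\Lambda)}\|_{\ell^2 \to \ell^2} \leq \eps_0$, the holomorphic calculus applied to $z \mapsto z^{-1}$ and $z \mapsto z^{-1/2}$ on $B_{1/2}(1)$ produces two matrices $M^{-1}, M^{-1/2} \in \goodMatrices(\Lambda)$. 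Invoking Lemma~\ref{lem:AlmostTightRieszSequenceExistence} with an $\eps$ small enough that $(1+\eps)^2 - (1-\eps)^2 \leq \eps_0$, I then obtain a compact $K \supset \overline{Q}$ such that, for every $K$-separated $\Lambda$, the spectrum of $\gramian$ lies in $[(1-\eps)^2, (1+\eps)^2]$ and so $\|\gramian - \identity_{\ell^2(\Lambda)}\|_{\ell^2 \to \ell^2} \leq \eps_0$. Both $\gramian^{-1}$ and $\gramian^{-1/2}$ therefore belong to $\goodMatrices(\Lambda)$.

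For part (i), I set $h'_\lambda := \sum_{\mu \in \Lambda} (\gramian^{-1})_{\mu, \lambda} \, \normalized{k_\mu}$. A direct computation using the Hermitian symmetry of $\gramian^{-1}$ gives $\langle \normalized{k_\nu}, h'_\lambda \rangle = (\gramian^{-1} \gramian)_{\lambda, \nu} = \delta_{\lambda, \nu}$, so $(h'_\lambda)_{\lambda \in \Lambda}$ is the biorthogonal system to $(\normalized{k_\lambda})_{\lambda \in \Lambda}$ inside $\overline{\Span}\{\normalized{k_\lambda} \colon \lambda \in \Lambda\}$. Since $\gramian^{-1} \in \goodMatrices(\Lambda)$ (with envelope symmetric in the two indices) and $(\normalized{k_\mu})$ is a system of $w$-molecules, Lemma~\ref{lem:molecule_convmatrix}(ii) shows that $(h'_\lambda)$ is a system of $w$-molecules. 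The biorthogonal system to $(k_\lambda)$ is then $h_\lambda := \|k_\lambda\|_{L^2}^{-1} \, h'_\lambda$, which remains a system of $w$-molecules since $\|k_\lambda\|_{L^2}^{-1}$ is uniformly bounded; the Riesz sequence property of $(k_\lambda)$ follows from that of $(\normalized{k_\lambda})$, using the same two-sided bounds on $\|k_\lambda\|_{L^2}$.

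For part (ii), I set $g_\lambda := \sum_{\mu \in \Lambda} (\gramian^{-1/2})_{\mu, \lambda} \, \normalized{k_\mu}$. A routine computation with the Hermitian symmetry of $\gramian^{-1/2}$ yields $\langle g_\lambda, g_{\lambda'} \rangle = (\gramian^{-1/2} \gramian \gramian^{-1/2})_{\lambda', \lambda} = \delta_{\lambda, \lambda'}$, so $(g_\lambda)$ is orthonormal in $\overline{\Span}\{\normalized{k_\lambda}\}$; a second application of Lemma~\ref{lem:molecule_convmatrix}(ii), this time with $\gramian^{-1/2} \in \goodMatrices(\Lambda)$, shows that $(g_\lambda)$ is a system of $w$-molecules. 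The main obstacle is the careful sequencing of the quantitative constants: the envelope $\Phi_0$ and separation bound $R_0$ must be pinned down universally (independently of $\Lambda$) before Theorem~\ref{thm:invertibility_convdom_matrix} delivers $\eps_0$, and only then may Lemma~\ref{lem:AlmostTightRieszSequenceExistence} be invoked to extract a single compact $K$ that works uniformly for every $K$-separated family $\Lambda$.
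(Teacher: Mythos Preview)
Your proposal is correct and follows essentially the same route as the paper: normalize to $\normalized{k_\lambda}$, observe via Lemma~\ref{lem:molecule_convmatrix}(i) that the Gramian $\gramian$ has a fixed envelope $\lowdiag^{-1}(\Theta\ast\Theta)$ independent of $\Lambda$, invoke Theorem~\ref{thm:invertibility_convdom_matrix} to secure $\gramian^{-1},\gramian^{-1/2}\in\goodMatrices(\Lambda)$ once $\|\gramian-\identity\|$ is small, and then use Lemma~\ref{lem:AlmostTightRieszSequenceExistence} to find the compact $K$ that forces this smallness. The only cosmetic differences are that the paper writes the biorthogonal system as $\overline{\gramian^{-1}}(\normalized{k_\lambda})$ rather than your $\sum_\mu(\gramian^{-1})_{\mu,\lambda}\normalized{k_\mu}$ (these agree by Hermitian symmetry), and that the paper uses the sharper bound $\rel(\Lambda)\leq 1$ for $Q$-separated families instead of your $R_0=\mu_G(QQ)/\mu_G(Q)$.
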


\newcommand{\GramianTilde}{\!\widetilde{\,\gramian\,}\!}

\begin{proof}
  Let $\Phi := \lowdiag^{-1} \cdot (\Theta \ast \Theta) \in \Wstw(G)$, with $\alpha$ and $\Theta$
  as in \eqref{eq:diagonal_kernel} and \eqref{eq:localization_kernel}.
  Theorem~\ref{thm:invertibility_convdom_matrix} yields
  $\eps = \eps(\alpha,\Theta,w) \in (0,1)$ such that for any $Q$-separated family $\Lambda$ in $G$
  and any $M \in \goodMatrices(\Lambda)$ with envelope $\Phi$ and satisfying
  $\| M - \identity_{\ell^2(\Lambda)} \|_{\ell^2 \to \ell^2} \leq \eps$, we have
  $M^{-1} \in \goodMatrices(\Lambda)$ and $M^{-1/2} \in \goodMatrices(\Lambda)$ as well.
  Using Lemma~\ref{lem:AlmostTightRieszSequenceExistence}, choose a compact set
  $K \subset G$ such that $K \supset Q$ and such that for every $K$-separated family
  $\Lambda$ in $G$, the family $(\widetilde{k_\lambda})_{\lambda \in \Lambda}$ of normalized kernels
  $\widetilde{k_\lambda} := k_\lambda / \| k_\lambda \|_{L^2}$ is a Riesz sequence
  with lower Riesz bound $(1 - \frac{\eps}{3})^2$ and upper Riesz bound $(1 + \frac{\eps}{3})^2$.

  Let $\Lambda$ be $K$-separated in $G$.
  By Lemma~\ref{lem:molecule_convmatrix} and because of $\| k_\lambda \|_{L^2}^2 \geq \lowdiag$,
  the Gramian $\GramianTilde \in \mathcal{B}(\ell^2(\Lambda))$
  of the family $(\widetilde{k_\lambda})_{\lambda \in \Lambda}$ satisfies
  $\GramianTilde \in \goodMatrices(\Lambda)$ with envelope $\Phi$ as defined above.
  Furthermore, the fact that $(\widetilde{k_\lambda})_{\lambda \in \Lambda}$
  has lower and upper Riesz bounds $(1 - \frac{\eps}{3})^2$ and $(1 + \frac{\eps}{3})^2$ means that
  \({
    \| \identity_{\ell^2(\Lambda)} - \GramianTilde \|_{\ell^2 \to \ell^2}
    \leq \max \big\{ (1 + \frac{\eps}{3})^2 - 1, 1 - (1 - \frac{\eps}{3})^2 \big\}
    \leq \eps .
  }\)
  By choice of $\eps$, this implies $\GramianTilde^{-1} \in \goodMatrices(\Lambda)$
  and $\GramianTilde^{-1/2} \in \goodMatrices(\Lambda)$.
  With this preparation, we now prove both parts of the theorem.

  (i) Note that $\overline{\GramianTilde^{-1}} \in \goodMatrices (\Lambda)$,
  where $\overline{\GramianTilde^{-1}}$ denotes the matrix obtained from
  $\GramianTilde^{-1}$ by conjugating each entry.
  Using the notation of Lemma~\ref{lem:molecule_convmatrix}, define
  \(
    (\widetilde{h_{\lambda'}})_{\lambda' \in \Lambda}
    := \overline{\GramianTilde^{-1}} ( \,\widetilde{k}_\lambda)_{\lambda \in \Lambda}
  \)
  and
  \({
    h_{\lambda'}
    := \| k_{\lambda'} \|_{L^2}^{-1} \cdot \widetilde{h_{\lambda'}}
  }\)
  for $\lambda' \in \Lambda$.
  Since $\| k_{\lambda} \|_{L^2}^2 \geq \lowdiag$ and since $(k_\lambda)_{\lambda \in \Lambda}$
  is a family of $w$-molecules by condition (\Ktwo),
  we see that also $(\widetilde{k_\lambda})_{\lambda \in \Lambda}$ is a family of $w$-molecules.
  Therefore, Lemma~\ref{lem:molecule_convmatrix} shows that the same holds for the families
  $(\widetilde{h_\lambda})_{\lambda \in \Lambda}$ and $(h_{\lambda})_{\lambda \in \Lambda}$.
  Furthermore, the series defining
  \(
    \widetilde{h_{\lambda'}}
    = \sum_{\lambda \in \Lambda}
        \overline{(\GramianTilde^{-1})_{\lambda', \lambda}} \, \widetilde{k}_\lambda
  \)
  converges in $L^2(G)$ by \eqref{eq:GoodMatricesAreSchur},
  which implies that
  \(
    \widetilde{h_{\lambda'}}
    \in \overline{\vphantom{h} \mathrm{span}} \{ k_\lambda \colon \lambda \in \Lambda \}
  \)
  and hence
  \(
    h_{\lambda'}
    \in \overline{\vphantom{h} \mathrm{span}} \{ k_\lambda \colon \lambda \in \Lambda \}
  \)
  for all $\lambda' \in \Lambda$.
  To show that $(h_\lambda)_{\lambda \in \Lambda}$ is biorthogonal
  to $(k_\lambda)_{\lambda \in \Lambda}$, we compute
  \begin{align*}
    \langle h_{\lambda'}, k_\lambda \rangle
    & = \| k_{\lambda'} \|_{L^2}^{-1}
        \Big\langle
          \sum_{\nu \in \Lambda}
            \overline{(\GramianTilde^{-1})_{\lambda',\nu}} \,\, \widetilde{k}_\nu,
          k_\lambda
        \Big\rangle
      = \frac{\| k_\lambda \|_{L^2}}{\| k_{\lambda'} \|_{L^2}} \,\,
        \sum_{\nu \in \Lambda}
          \overline{(\GramianTilde^{-1})_{\lambda',\nu}} \,
          \langle \widetilde{k}_\nu, \widetilde{k}_\lambda \rangle \\
    & = \frac{\| k_\lambda \|_{L^2}}{\| k_{\lambda'} \|_{L^2}} \,\,
        \overline{
          \sum_{\nu \in \Lambda}
            (\GramianTilde^{-1})_{\lambda',\nu} \,
            (\GramianTilde)_{\nu,\lambda}
        }
      = \frac{\| k_\lambda \|_{L^2}}{\| k_{\lambda'} \|_{L^2}} \,\,
        \overline{(\identity_{\ell^2(\Lambda)})_{\lambda',\lambda}}
      = \delta_{\lambda',\lambda},
  \end{align*}
  which completes the proof of Part~(i).

  For (ii), similar arguments as in (i) show that the system
  \(
    (g_{\lambda'})_{\lambda' \in \Lambda}
    = \overline{\GramianTilde^{-1/2}} \, ( \widetilde{k_{\lambda}})_{\lambda \in \Lambda}
  \)
  forms a system of $w$-molecules, and that
  \(
    g_{\lambda'}
    \in \overline{\vphantom{h} \mathrm{span}} \{ k_\lambda \colon \lambda \in \Lambda \}
  \).
  For brevity, let us set $A := \GramianTilde$, noting that $A = A^\ast$
  and hence $A^{-1/2} = (A^{-1/2})^\ast$ as well, meaning
  $(\overline{A^{-1/2}})_{\lambda,\gamma} = (A^{-1/2})_{\gamma,\lambda}$
  for all $\lambda, \gamma \in \Lambda$.
  Using this identity and that
  $A_{\theta,\gamma} = \langle \widetilde{k_\gamma}, \widetilde{k_\theta} \rangle$,
  a direct computation shows
  \begin{align*}
    \langle g_{\lambda'}, g_\lambda \rangle
    & = \sum_{\gamma, \theta \in \Lambda}
          \Big\langle
            \big( \overline{A^{-1/2}} \big)_{\lambda', \gamma} \, \widetilde{k_\gamma} , \,\,
            \big( \overline{A^{-1/2}} \big)_{\lambda, \theta} \, \widetilde{k_\theta}
          \Big\rangle
      = \sum_{\gamma, \theta \in \Lambda}
          (A^{-1/2})_{\gamma,\lambda'} \,
          (A^{-1/2})_{\lambda, \theta} \,
          A_{\theta, \gamma} \\
    & = \sum_{\gamma \in \Lambda}
          (A^{-1/2} A)_{\lambda, \gamma} \, (A^{-1/2})_{\gamma, \lambda'}
      = \big( A^{-1/2} A A^{-1/2} \,\big)_{\lambda, \lambda'}
      = \delta_{\lambda, \lambda'} ,
  \end{align*}
  for all $\lambda, \lambda' \in \Lambda$, since $A$ and $A^{-1/2}$ commute.
  Thus, $(g_\lambda)_{\lambda \in \Lambda}$ is an orthogonal family.
\end{proof}

\section{Coorbit spaces associated to integrable group representations and other examples}
\label{sec_examples_a}

\subsection{Coorbit spaces} \label{sec:CoorbitSpaces}
The main ingredient of coorbit theory is an irreducible, square-in\-te\-grable unitary representation
$(\pi, \Hpi)$ of a locally compact group $G$ on a Hilbert space $\Hpi$.
For $g \in \Hpi$, define $V_g : \Hpi \to C_b (G)$
by $ V_g f (x) = \langle f, \pi (x) g \rangle$ for $x \in G$.
The \emph{orthogonality relations} for square-integrable representations
 \cite{carey1976square,duflo1976on}
show that there is a unique densely defined, self-adjoint, positive operator
$C_{\pi} : \dom(C_{\pi}) \to \Hpi$ such that:

\begin{enumerate}
  \item[(1)] The domain of $C_{\pi}$ consists precisely of the \emph{admissible vectors}
             of $(\pi, \Hpi)$, that is, it satisfies
             \({
               \dom(C_{\pi})
               = \{
                   g \in \Hpi: V_g g \in L^2 (G)
                 \}
               = \{
                   g \in \Hpi \colon V_g f \in L^2(G) \text{ for all } f \in \Hpi
                 \}.
             }\)
             \vspace*{0.1cm}

  \item[(2)] For all $f_1, f_2 \in \Hpi$ and $g_1, g_2 \in \dom(C_{\pi})$, it holds that
             \begin{align}\label{eq:ortho_relations_a}
               \langle V_{g_1} f_1 , \,\, V_{g_2} f_2 \rangle_{L^2(G)}
               = \langle C_{\pi} \, g_2, C_{\pi} \, g_1 \rangle_{\Hpi}
                 \cdot \langle f_1, f_2 \rangle_{\Hpi} .
             \end{align}
\end{enumerate}
This implies for every ${0 \neq g \in \dom(C_\pi)}$
that $V_g (\Hpi) \subset L^2(G)$ is a RKHS with reproducing kernel
\begin{equation}
  k(x,y) = \| C_\pi g \|_{\Hpi}^{-2} \cdot V_g g (y^{-1} x) .
  \label{eq:CoorbitReproducingKernel}
\end{equation}
Indeed, \eqref{eq:ortho_relations_a} implies that
\(
  V_g f (x)
  = \langle f, \pi(x) g \rangle_{\Hpi}
  = \big\langle V_g f, \| C_\pi g \|_{\Hpi}^{-2} \cdot V_g [\pi(x) g] \big\rangle
  = \langle V_g f, k(\cdot, x) \rangle
\)
for all $f \in \Hpi$ and $x \in G$.
Completeness of $V_g (\Hpi)$ follows from
$\| V_g f \|_{L^2(G)}^2 = \| C_\pi g \|_{\Hpi}^2 \, \| f \|_{\Hpi}^2$
which is again a consequence of \eqref{eq:ortho_relations_a}.

The discretization scheme from \cite{feichtinger1989banach1,feichtinger1989banach2,groechenig1991describing}
is concerned with sampling the reproducing formula associated to $V_g (\Hpi)$
to obtain coherent frames $\bigl(\pi(x_i) g\bigr)_{i \in I}$ for $\Hpi$.
In fact, the results in \cite{feichtinger1989banach1,feichtinger1989banach2,groechenig1991describing}
show that the frame expansions do not only hold in $\Hpi$, but
that they extend to a large class of so-called \emph{coorbit spaces}.

A coorbit space $\Co (Y)$ is determined by a
solid, left and right translation-invariant Banach function space $Y \subset L_{\loc}^1 (G)$
of (equivalence classes of a.e.~equal) functions $G \to \CC$;
see \cite{feichtinger1989banach1} for details on the definitions.
Associated to such a space $Y$, it is assumed that there exists a
\emph{control weight} $w : G \to (0,\infty)$ for $Y$, meaning that $w$ satisfies the following:
\begin{enumerate}
  \item[(w1)] $w : G \to [1,\infty)$ is continuous and submultiplicative,
              \vspace*{0.1cm}

  \item[(w2)] $w$ satisfies the symmetry condition
              \begin{equation}
                w(x) = \Delta(x^{-1}) \, w(x^{-1})
                \quad \forall \, x \in G ,
                \label{eq:ControlWeightSymmetry}
              \end{equation}

  \item[(w3)] $w$ is \emph{compatible with $Y$}; that is,
              \begin{equation}
                w(x)
                \geq C \, \max \big\{
                                  \| L_x \|_{Y \to Y}, \,\,
                                  \| L_{x^{-1}} \|_{Y \to Y}, \,\,
                                  \Delta(x^{-1}) \, \| R_{x^{-1}} \|_{Y \to Y}, \,\,
                                  \| R_x \|_{Y \to Y}
                                \big\}
                \label{eq:CoorbitWeightCondition}
              \end{equation}
              and
              \begin{equation}
                Y \ast L_w^1 \subset Y
                \qquad \text{with} \qquad
                \| f \ast g \|_Y \leq \| f \|_Y \cdot \| g \|_{L_w^1} .
                \label{eq:CoorbitConvolutionRelation}
              \end{equation}
\end{enumerate}
Given such a space $Y$ with control weight $w$, it is assumed in
\cite{feichtinger1989banach1,feichtinger1989banach2,groechenig1991describing}
that $0 \neq g \in \Hpi$ is a \emph{better vector}, meaning that $V_g g \in \WLw (G)$.%
\footnote{Such better vectors exist if and only if $(\pi, \Hpi)$ is $w$-integrable,
that is, if and only if a non-zero vector $g \in \Hpi$ with $V_gg \in L^1_w(G)$ exists;
see \cite[Lemma in Section~6.1]{feichtinger1988unified}.}
Denote by $\Reservoir_w := (\TestVectors)^{\neg}$ the \emph{antidual} of the space
\[
  \TestVectors
  := \{ f \in \Hpi \,\,\colon\,\, V_g f \in L_w^1(G) \},
\]
with norm $\| f \|_{\TestVectors} := \| V_g f \|_{L_w^1}$.
One has $\TestVectors \hookrightarrow \Hpi$, and hence $\Hpi \hookrightarrow \Reservoir_w$.
Using the notation $\langle f, \varphi \rangle := f( \varphi)$
for the pairing between $f \in \Reservoir_w$ and $\varphi \in \TestVectors$,
the (extended) matrix coefficients are defined by
\[
  V_g f (x) := \langle f, \pi(x) g \rangle,
  \quad  x \in G,
\]
which extends the definition of $V_g f$ for $f, g \in \Hpi$.
The coorbit space $\Co(Y)$ is defined as
\[
  \Co (Y)
  := \bigl\{ f \in \Reservoir_w \,\,\colon\,\, V_g f \in Y \bigr\}
\]
and equipped with the norm $\| f \|_{\Co (Y)} := \| V_g f \|_Y $.
The spaces $\Co(Y)$, $\TestVectors$ and $ \Reservoir_w$ are independent of the
precise choice of $g$.
Moreover, the space $\Co(Y)$ is complete and independent of the precise
choice of the control weight $w$.
We refer to \cite[Theorems~4.1 and 4.2]{feichtinger1989banach1} for the details.

\begin{remark}
  In \cite{feichtinger1989banach1}, instead of $Y \ast L_w^1 \subset Y$,
  it is actually assumed that $Y \ast (L_w^1)^{\vee} \subset Y$;
  however, due to \eqref{eq:ControlWeightSymmetry}
  we have $\| f^{\vee} \|_{L_w^1} = \| f \|_{L_w^1}$ for each measurable ${f : G \to \CC}$,
  so that both conditions are equivalent.
  Furthermore, the identity ${\| f^{\vee} \|_{L_w^1} = \| f \|_{L_w^1}}$ yields that
  the space $\WRw(G)$ as defined in the present paper
  coincides with the right-sided Wiener amalgam space as used in
  \cite{feichtinger1989banach1,feichtinger1989banach2,groechenig1991describing}.%
  \footnote{In these papers, the norm on $\WRw(G)$ is given by
  $\| f \|_{\WRw(G)} := \| (\maxR f)^{\vee} \|_{L_w^1}$ instead of our convention
  $\| f \|_{\WRw(G)} = \| \maxR f \|_{L_w^1}$; see \cite[Equation~(2.11)]{groechenig1991describing}.}
  Finally, the symmetry condition \eqref{eq:ControlWeightSymmetry} shows because of
  $|V_g g(x)| = |V_g g (x^{-1})|$ and since $\maxL (f^{\vee}) = (\maxR f)^{\vee}$
  that $V_g g \in \WLw(G)$ if and only if $V_g g \in \WRw (G)$.
\end{remark}

Theorem~\ref{thm:frame_main1} can be used to obtain the
following improved discretization result for the coorbit spaces:

\begin{theorem}\label{thm:frame_coorbit}
  Let $w : G \to [1,\infty)$ be continuous and submultiplicative
  and such that \eqref{eq:ControlWeightSymmetry} holds.
  Suppose that $g \in \Hpi \setminus \{ 0 \}$ satisfies $V_g g \in \WLw (G)$.
  Then there exists a compact unit-neighborhood $U \subset G$
  such that for every relatively separated $U$-dense family $\Lambda$ in $G$
  there exist vectors $\{f_\lambda:\lambda \in \Lambda \} \subset \TestVectors \subset \Hpi$
  with the following properties:
  \begin{enumerate}
  \item[(i)] There exists an envelope $\Phi \in \Wstw(G)$ such that
             \begin{align}\label{eq:CoorbitMoleculeCondition}
               \abs{V_g f_\lambda(x)}
               \leq \min \big\{
                           \Phi (\lambda^{-1} x),
                           \Phi(x^{-1} \lambda)
                         \big\}
               \qquad \text{for all} \quad x \in G \text{ and } \lambda \in \Lambda .
             \end{align}

  \item[(ii)] If $Y \subset L_{\loc}^1 (G)$ is any solid, translation-invariant Banach space
              with control weight $w$, then any $f \in \CoY$ satisfies
              \begin{align}\label{eq_bbb}
                f = \sum_{\lambda \in \Lambda}
                      \langle f, \pi(\lambda) g \rangle \, f_{\lambda}
                  = \sum_{\lambda \in \Lambda}
                      \langle f, f_{\lambda} \rangle \, \pi(\lambda) g,
              \end{align}
              where the series converge unconditionally in the weak-$\ast$-topology on $\Reservoir_w$.
              Moreover, the following norms are equivalent:
              \[
                \big\| f \big\|_{\CoY}
                \asymp \big\|
                         \bigl(
                           \langle f, f_{\lambda} \rangle
                         \bigr)_{\lambda \in \Lambda}
                       \big\|_{Y_d (\Lambda)}
                \asymp \big\|
                         \bigl(
                           \langle f, \pi(\lambda) g \rangle
                         \bigr)_{\lambda \in \Lambda}
                       \big\|_{Y_d (\Lambda)},
              \]
              where $Y_d (\Lambda) \subset \mathbb{C}^{\Lambda}$ denotes the sequence associated to $Y$,
              given by
              \[
                Y_d (\Lambda) = \{ c \in \CC^{\Lambda} \,\,\colon\,\, \| c \|_{Y_d} < \infty \},
                \quad \text{where} \quad
                \| c \|_{Y_d} = \bigg\|
                                  \sum_{\lambda \in \Lambda} |c_\lambda| \, \indicator_{\lambda Q}
                                \bigg\|_{Y} .
              \]
  \end{enumerate}
\end{theorem}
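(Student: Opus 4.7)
The plan is to reduce Theorem~\ref{thm:frame_coorbit} to Theorem~\ref{thm:frame_main1}, applied to the reproducing kernel Hilbert space $\RKHS := V_g(\Hpi) \subset L^2(G)$. By the orthogonality relations \eqref{eq:ortho_relations_a}, the map $V_g : \Hpi \to \RKHS$ is a constant multiple of a unitary isomorphism, and by \eqref{eq:CoorbitReproducingKernel} the reproducing kernel is $k(x,y) = \|C_\pi g\|_{\Hpi}^{-2} \, V_g g(y^{-1} x)$. To check condition (\Ktwo), the weight symmetry \eqref{eq:ControlWeightSymmetry} together with the identity $|V_g g(x)| = |V_g g(x^{-1})|$ (from unitarity of $\pi$) shows that the assumption $V_g g \in \WLw(G)$ is equivalent to $V_g g \in \WRw(G)$; a routine argument in Wiener amalgam theory then produces an envelope $\Theta \in \Wstw(G)$ dominating $|V_g g|$ pointwise. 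For condition (\Kthree), I would apply Lemma~\ref{lem:SUCImpliesWUC} with $\Gamma \equiv 1$: strong continuity of $\pi$ yields $\|\pi(z) g - g\|_{\Hpi} \to 0$ as $z \to e$, so that $\|k_x - k_y\|_{L^2} = \|C_\pi g\|_{\Hpi}^{-2} \cdot \|V_g g - V_g[\pi(y^{-1}x) g]\|_{L^2} \to 0$ as $y^{-1}x \to e$, which gives weak uniform continuity through the ``in addition'' part of Lemma~\ref{lem:SUCImpliesWUC}.

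With $\RKHS$ satisfying (\Ktwo) and (\Kthree), Theorem~\ref{thm:frame_main1} produces a compact unit neighborhood $U \subset G$ such that for every relatively separated and $U$-dense $\Lambda$, the kernels $(k_\lambda)_{\lambda \in \Lambda}$ form a frame for $\RKHS$ with a dual frame $(h_\lambda)_{\lambda \in \Lambda}$ of $w$-molecules. Setting $f_\lambda := V_g^{-1} h_\lambda \in \Hpi$, the identity $V_g f_\lambda = h_\lambda$ together with the envelope bound $|h_\lambda(x)| \leq \min\{\Phi(\lambda^{-1} x), \Phi(x^{-1} \lambda)\}$ (for some $\Phi \in \Wstw(G)$) delivers both the envelope estimate \eqref{eq:CoorbitMoleculeCondition} and the inclusion $f_\lambda \in \TestVectors$, since $\Phi \in \Wstw(G) \subset L^1_w(G)$. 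Using $V_g[\pi(\lambda) g](x) = V_g g(\lambda^{-1} x) = \|C_\pi g\|_{\Hpi}^{2} \cdot k_\lambda(x)$, the frame/dual-frame expansion in $\RKHS$ pulls back under $V_g^{-1}$ to the $L^2$-version of the two series in \eqref{eq_bbb}, proving part (i).

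To obtain part (ii), the expansions are extended to the coorbit spaces $\CoY$ via standard coorbit machinery. The two-sided envelope bound $|V_g f_\lambda| \leq L_\lambda \Phi$ with $\Phi \in \Wstw(G) \hookrightarrow \WLw(G) \cap \WRw(G)$, combined with the convolution relations~\eqref{eq:CoorbitConvolutionRelation}, implies the boundedness of the analysis operators $f \mapsto (\langle f, f_\lambda \rangle)_\lambda$ and $f \mapsto (\langle f, \pi(\lambda) g \rangle)_\lambda$ from $\CoY$ into $Y_d(\Lambda)$, as well as the boundedness of the synthesis operators $(c_\lambda) \mapsto \sum_\lambda c_\lambda \, \pi(\lambda) g$ and $(c_\lambda) \mapsto \sum_\lambda c_\lambda \, f_\lambda$ from $Y_d(\Lambda)$ into $\CoY$, with unconditional weak-$\ast$ convergence in $\Reservoir_w$. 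The identity \eqref{eq_bbb}, already valid on the dense subspace $\TestVectors \subset \CoY$ by the $L^2$-case, then extends to all of $\CoY$ by boundedness together with the reproducing formula $V_g f = \|C_\pi g\|_{\Hpi}^{-2} \cdot V_g f \ast V_g g$, and the stated norm equivalence is immediate from the boundedness of analysis and synthesis operators combined with the reconstruction identity.

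The conceptual substance of Theorem~\ref{thm:frame_coorbit} resides entirely in Theorem~\ref{thm:frame_main1}; what remains is essentially bookkeeping at the coorbit-theoretic level. The main subtlety lies in requiring the envelope $\Phi$ to belong to the two-sided amalgam $\Wstw(G)$ rather than only $\WLw$ or $\WRw$: left-sided decay drives the synthesis operator's $\CoY$-boundedness via convolution of $Y$ with $L^1_w$, while right-sided decay drives the analysis operator through the companion inclusion. Classical coorbit discretization in \cite{feichtinger1989banach1,groechenig1991describing} constructs the dual system as an abstract image under the inverse frame operator without quantitative localization; Theorem~\ref{thm:frame_main1} supplies precisely the missing envelope control via the local holomorphic calculus of Section~\ref{sec:ConvolutionDominatedOperators}, which is the heart of the improvement.
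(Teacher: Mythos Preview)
Your reduction to Theorem~\ref{thm:frame_main1} and the subsequent coorbit-theoretic bookkeeping match the paper's proof essentially line for line, including the use of Lemma~\ref{lem:SUCImpliesWUC} for (\Kthree) and the extension of \eqref{eq_bbb} from $\Hpi$ to $\CoY$ via boundedness of the analysis and synthesis maps (the paper packages the latter into Lemmas~\ref{lem:CoefficientReconstructionBounded} and~\ref{lem:WeakStarContinuity}, using weak-$\ast$ density of $\Hpi$ in $\Reservoir_w$ rather than norm density of $\TestVectors$ in $\CoY$, but this is a minor variation).

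There is one genuine gap in your verification of (\Ktwo). You correctly observe that $V_g g \in \WLw(G) \cap \WRw(G)$, but then assert that ``a routine argument in Wiener amalgam theory'' upgrades this to an envelope in $\Wstw(G)$. This step is not routine: the inclusion $\WLw(G) \cap \WRw(G) \cap C(G) \subset \Wstw(G)$ is false in general, since $\maxR \maxL f \in L^1_w$ is strictly stronger than $\maxL f, \maxR f \in L^1_w$ separately. The paper's argument uses a specific trick: the orthogonality relations imply the self-reproducing identity $V_g g = \|C_\pi g\|_{\Hpi}^{-2} \cdot (V_g g \ast V_g g)$, and then the convolution relation~\eqref{eq:amalgam_convolution}, namely $\WRw(G) \ast \WLw(G) \hookrightarrow \Wstw(G)$, yields $V_g g \in \Wstw(G)$ directly. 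Once you insert this step, your proof is complete and coincides with the paper's.
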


\begin{proof}
We consider the space $\RKHS=V_g (\Hpi)$ and verify the hypothesis of Theorem~\ref{thm:frame_main1}.
As seen around Equation~\eqref{eq:CoorbitReproducingKernel}, $\RKHS$ is a RKHS with kernel
$k(x,y) = \| C_\pi g \|_{\Hpi}^{-2} \cdot V_g g (y^{-1} x)$.
The diagonal of $k$ is constant, $k(x,x) = \|C_{\pi} g \|^{-2}_{\Hpi} \, \|g\|^2_{\Hpi}$,
so that (\Kone) holds.
For (\Ktwo), note that
$\overline{V_g [\pi(x) g] (y)} = \langle \pi(y) g , \pi(x) g \rangle_{\Hpi} = V_g g (y^{-1} x)$,
so that the orthogonality relation \eqref{eq:ortho_relations_a} implies
\[
  V_g g (x)
  = \langle g, \pi(x) g \rangle_{\Hpi}
  = \| C_\pi g \|_{\Hpi}^{-2} \cdot \langle V_g g, V_g [\pi(x) g] \rangle_{L^2}
  = \| C_\pi g \|_{\Hpi}^{-2} \cdot (V_g g \ast V_g g) (x)
  .
\]
Since ${V_g g \in \WLw (G) \cap \WRw(G)}$, the convolution relation
\eqref{eq:amalgam_convolution} shows that $V_g g \in \Wstw(G)$, so that
$\Theta := \| C_\pi \, g \|_{\Hpi}^{-2} \, |V_g g| \in \Wstw (G)$
is an envelope for the reproducing kernel $k$ of $\RKHS$.
Finally, (\Kthree) follows from Lemma~\ref{lem:SUCImpliesWUC}, since
\[
  \| k_x - k_y \|_{L^1}
  = \| C_{\pi} g \|_{\Hpi}^{-2} \, \| L_{y^{-1} x} V_gg - V_g g \|_{L^1}
  \to 0,
  \quad \text{as} \quad y^{-1} x \to e,
\]
by the strong continuity of $L_y : L^1 (G) \to L^1 (G)$.

We can now apply Theorem~\ref{thm:frame_main1} to the space $\mathcal{K} = V_g (\Hpi)$
to obtain a compact unit-neighborhood $U \subset G$ such that for every relatively separated
$U$-dense family $\Lambda$ in $G$, there exist vectors $\{f_\lambda:\lambda \in \Lambda \} \subset \Hpi$
such that (i) is satisfied, and such that \eqref{eq_bbb} holds for all $f \in \Hpi$.
Note that property (i) implies $V_g f_\lambda \in L_w^1(G)$ and hence $f_\lambda \in \TestVectors$.

To prove \eqref{eq_bbb} for $f \in \Co(Y)$, we use that
Lemma~\ref{lem:CoefficientReconstructionBounded} implies that the coefficient operators
\({
  \analysis : f \mapsto \big(
                          \langle f, \pi(\lambda) g \rangle
                        \big)_{\lambda \in \Lambda}
}\)
and
\({
  \widetilde{\analysis} : f \mapsto \big(
                                      \langle f, f_{\lambda} \rangle
                                    \big)_{\lambda \in \Lambda}
}\)
map $\CoY$ boundedly into $Y_d (\Lambda)$.
Similarly, the reconstruction maps
\({
  \synthesis :
  (c_{\lambda})_{\lambda \in \Lambda} \mapsto \sum_{\lambda \in \Lambda}
                                                c_{\lambda} \, \pi(\lambda) g
}\)
and
\({
  \widetilde{\synthesis} :
  (c_{\lambda})_{\lambda \in \Lambda} \mapsto \sum_{\lambda \in \Lambda}
                                                c_{\lambda} \, f_{\lambda}
}\)
map $Y_d (\Lambda)$ boundedly into $\Co (Y)$, with the required convergence properties.
The identity
\[
 f = \widetilde{\synthesis} (\analysis f) = \synthesis (\widetilde{\analysis} f)
\]
therefore extends by density from $f \in \Hpi$ to $f \in \CoY$;
see Lemma~\ref{lem:WeakStarContinuity} for the technical details.
This also implies that
\(
  \| f \|_{\Co(Y)}
  \lesssim \| \analysis f \|_{Y_d}
  \lesssim \| f \|_{\Co (Y)}
\)
and similarly ${\| f \|_{\Co(Y)} \asymp \| \widetilde{\analysis} f \|_{Y_d}}$,
which establishes the claimed norm equivalence.
\end{proof}

While \cite{feichtinger1989banach1, groechenig1991describing}
gives part (ii) of Theorem~\ref{thm:frame_coorbit}, the novelty of our result
is that the dual system $(f_\lambda)_{\lambda \in \Lambda}$
can be chosen to satisfy \eqref{eq:CoorbitMoleculeCondition}.

\begin{remark}
  Let $w : G \to [1,\infty)$ be continuous and submultiplicative
  and such that \eqref{eq:ControlWeightSymmetry} holds.
  Suppose there exists $g \in \Hpi \setminus \{ 0 \}$ such that $V_g g \in \WLw (G)$.
  Then, by Part (ii) of Theorem~\ref{thm:frame_main1}, there exists
  a \emph{tight} frame $(f_{\lambda})_{\lambda \in \Lambda}$ for
  $\Hpi$ satisfying the $w$-molecule condition \eqref{eq:CoorbitMoleculeCondition}.
  As in Theorem~\ref{thm:frame_coorbit}, the tight frame expansion
  $f = \sum_{\lambda \in \Lambda} \langle f, f_{\lambda} \rangle f_{\lambda}$ extends to $\Co(Y)$
  for any solid, translation-invariant Banach space $Y \subset L^1_{\loc} (G)$ with control weight $w$.
\end{remark}

In a similar fashion, Theorem~\ref{thm:riesz_main} can be used to obtain coherent Riesz sequences
with biorthogonal system forming a family of molecules:

\begin{theorem}\label{thm:riesz_coorbit}
  Let $w : G \to [1,\infty)$ be continuous and submultiplicative and such that
  \eqref{eq:ControlWeightSymmetry} holds.
  Suppose that $g \in \Hpi \setminus \{ 0 \}$ satisfies $V_g g \in \WLw (G)$.
  Then there exists a compact unit neighborhood $K \subset G$ such that,
  for every  $K$-separated family $\Lambda$ in $G$,
  there exist vectors $\{f_\lambda:\lambda \in \Lambda \} \subset \TestVectors \subset \Hpi$
  with the following properties:
  \begin{enumerate}
  \item[(i)] There exists an envelope $\Phi \in \Wstw(G)$ such that
             \begin{align}\label{eq_aaa}
               \abs{V_g f_\lambda(x)}
               \leq \min \big\{
                           \Phi (\lambda^{-1} x),
                           \Phi(x^{-1} \lambda)
                         \big\}
               \qquad \text{for all} \quad x \in G \text{ and } \lambda \in \Lambda .
             \end{align}

  \item[(ii)] If $Y \subset L_{\loc}^1 (G)$ is any solid, translation-invariant Banach space
              with control weight $w$, then for any $(c_{\lambda})_{\lambda \in \Lambda} \in Y_d (\Lambda)$,
              the vector
              \[
                f = \sum_{\lambda \in \Lambda} c_{\lambda} f_{\lambda}
              \]
              belongs to $\CoY$, satisfies $\| f \|_{\CoY} \lesssim \| c \|_{Y_d (\Lambda)}$,
              and solves the moment problem
              \[
                \langle f, \pi (\lambda) g \rangle = c_{\lambda},
                \quad \lambda \in \Lambda.
              \]
  \end{enumerate}
\end{theorem}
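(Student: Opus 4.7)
The plan is to apply Theorem~\ref{thm:riesz_main} to the reproducing kernel Hilbert space $\RKHS = V_g(\Hpi) \subset L^2(G)$, exactly as in the proof of Theorem~\ref{thm:frame_coorbit}. Recall that $\RKHS$ has reproducing kernel $k(x,y) = \|C_\pi g\|_{\Hpi}^{-2} \cdot V_g g(y^{-1} x)$, which has constant diagonal, so (\Kone) holds. Using the convolution identity $V_g g = \|C_\pi g\|_{\Hpi}^{-2} \cdot V_g g \ast V_g g$ that follows from \eqref{eq:ortho_relations_a} together with the convolution relation \eqref{eq:amalgam_convolution} and the assumption $V_g g \in \WLw(G) \cap \WRw(G)$, the function $\Theta := \|C_\pi g\|_{\Hpi}^{-2} \cdot |V_g g|$ belongs to $\Wstw(G)$ and serves as an envelope for $k$, so (\Ktwo) holds. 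Theorem~\ref{thm:riesz_main} then produces a compact unit neighborhood $K \subset G$ such that for every $K$-separated family $\Lambda$, the kernels $(k_\lambda)_{\lambda \in \Lambda}$ form a Riesz sequence in $\RKHS$ whose biorthogonal system $(h_\lambda)_{\lambda \in \Lambda} \subset \RKHS$ is a family of $w$-molecules.

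Next I would transfer this statement back to $\Hpi$. Since $V_g : \Hpi \to \RKHS$ is a bijective scalar multiple of an isometry with $V_g[\pi(\lambda) g] = \|C_\pi g\|_{\Hpi}^2 \cdot k_\lambda$, I define $f_\lambda \in \Hpi$ by $V_g f_\lambda := h_\lambda$. The molecule estimate for $h_\lambda \in \RKHS$ translates directly into the envelope condition \eqref{eq_aaa} for $V_g f_\lambda$, and in particular $V_g f_\lambda \in \Wstw(G) \subset L^1_w(G)$, so $f_\lambda \in \TestVectors$. The biorthogonality relation $\langle h_{\lambda}, k_{\lambda'} \rangle_{L^2(G)} = \delta_{\lambda, \lambda'}$ translates via \eqref{eq:ortho_relations_a} into the biorthogonality $\langle f_\lambda, \pi(\lambda') g \rangle_{\Hpi} = \delta_{\lambda, \lambda'}$, which establishes part~(i) together with the moment property at the level of $\Hpi$.

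For part~(ii), given $c \in Y_d(\Lambda)$, I would argue that the synthesis operator $\widetilde{\synthesis} : c \mapsto \sum_{\lambda \in \Lambda} c_\lambda f_\lambda$ maps $Y_d(\Lambda)$ boundedly into $\Co(Y)$, exactly as in the proof of Theorem~\ref{thm:frame_coorbit}. This is the content of the reconstruction-operator part of Lemma~\ref{lem:CoefficientReconstructionBounded}, which relies only on the $w$-molecule envelope \eqref{eq_aaa} (and not on any frame property), together with the convolution relation \eqref{eq:CoorbitConvolutionRelation} for $Y$. The convergence of the series in the appropriate topology on $\Co(Y) \hookrightarrow \Reservoir_w$ and the norm bound $\|f\|_{\Co(Y)} \lesssim \|c\|_{Y_d(\Lambda)}$ follow from this lemma. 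Finally, the moment condition $\langle f, \pi(\lambda) g \rangle = c_\lambda$ holds trivially for finitely supported $c$ by biorthogonality, and extends to all $c \in Y_d(\Lambda)$ by continuity: $\widetilde{\synthesis}$ is bounded into $\Co(Y)$, and the linear functional $f \mapsto \langle f, \pi(\lambda) g \rangle = V_g f(\lambda)$ is continuous on $\Co(Y)$ since $\Co(Y) \hookrightarrow \Reservoir_w$ and $\pi(\lambda) g \in \TestVectors$.

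The main obstacle is the bounded synthesis estimate $\|\sum_\lambda c_\lambda f_\lambda\|_{\Co(Y)} \lesssim \|c\|_{Y_d(\Lambda)}$ together with the correct interpretation of convergence: one must check that the series converges in the weak-$\ast$ sense on $\Reservoir_w$ (and in norm on $\Co(Y)$ when the finite sequences are dense in $Y_d(\Lambda)$), and that the enveloping estimate \eqref{eq_aaa} is strong enough to feed into the standard coorbit-theoretic convolution arguments. However, this follows by exactly the same technical machinery that was already invoked in Theorem~\ref{thm:frame_coorbit}, requiring no new ideas beyond replacing the dual frame vectors by the biorthogonal molecules $(f_\lambda)_{\lambda \in \Lambda}$ produced above.
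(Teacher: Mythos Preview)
Your proposal is correct and follows essentially the same route as the paper: apply Theorem~\ref{thm:riesz_main} to $\RKHS = V_g(\Hpi)$, transfer the biorthogonal molecules back to $\Hpi$ via $V_g$, and then use Lemma~\ref{lem:CoefficientReconstructionBounded} for the boundedness of the synthesis operator on $Y_d(\Lambda)$. The only point where the paper is slightly more careful is the extension of the moment identity $\langle \widetilde{\synthesis} c, \pi(\lambda) g \rangle = c_\lambda$ from finitely supported $c$ to all of $Y_d(\Lambda)$: since finite sequences need not be norm-dense in $Y_d(\Lambda)$ (think $Y = L_{1/w}^\infty$), the paper passes through $\ell_{1/w}^\infty(\Lambda)$ and invokes the weak-$\ast$ continuity statements of Lemma~\ref{lem:WeakStarContinuity}, which is exactly the ``same technical machinery'' you allude to in your final paragraph.
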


\begin{proof}
As in the proof of Theorem~\ref{thm:frame_coorbit}, we consider the space $\mathcal{K} = V_g (\Hpi)$;
in that proof we already showed that $\RKHS$ is a RKHS satisfying conditions (\Kone), (\Ktwo),
and (\Kthree).
An application of Theorem~\ref{thm:riesz_main} therefore provides
a compact unit-neighborhood $K \subset G$ such that for every $K$-separated family $\Lambda$
in $G$, there exist vectors $\{f_\lambda:\lambda \in \Lambda \} \subset \Hpi$ satisfying (i)
and such that
\begin{equation}\label{eq:left-right_riesz}
  c
  = \analysis (\widetilde{\synthesis} c)
  = \widetilde{\analysis} (\synthesis c) ,
  \qquad c \in \ell^2(\Lambda),
\end{equation}
with the operators $\analysis, \synthesis, \widetilde{\analysis}, \widetilde{\synthesis}$
from the proof of Theorem~\ref{thm:frame_coorbit}.
Again, property (i) implies that $f_\lambda \in \TestVectors$ for all $\lambda \in \Lambda$.
Furthermore, one can show that the identity \eqref{eq:left-right_riesz}
extends from $\ell^2(\Lambda)$ to all of $Y_d$;
see Lemma~\ref{lem:WeakStarContinuity} for the technical details.
The estimate ${\| f \|_{\Co(Y)} = \| \widetilde{\synthesis} c \|_{\Co(Y)} \lesssim \| c \|_{Y_d}}$
follows from the boundedness of $\widetilde{\synthesis} : Y_d (\Lambda) \to \Co(Y)$;
see Lemma~\ref{lem:CoefficientReconstructionBounded}.
\end{proof}

Part (ii) of Theorem~\ref{thm:riesz_coorbit} is contained in \cite{feichtinger1989banach2};
the novelty of the present result is that the moment problem has fundamental solutions $f_\lambda$
satisfying \eqref{eq_aaa}; see also Section~\ref{sec_int}.

\begin{remark}[Extensions and generalizations.]\label{rem_pro}
Our discretization results can also be applied
to representations $(\pi, \Hpi)$ that are only square-integrable \emph{modulo a central subgroup},
or to projective unitary representations as considered in
\cite{christensen1996atomic, christensen2019coorbits}.
In this setting, the phase function $\Gamma : G \times G \to \mathbb{T}$
appearing in Lemma~\ref{lem:SUCImpliesWUC} is particularly convenient to verify the kernel condition
(\Kthree).
In addition, since we only require smoothness conditions on the reproducing kernel,
our results can be applied to possibly \emph{reducible} representations $(\pi, \Hpi)$
and their associated coorbit spaces; see \cite{christensen2009examples, christensen2011coorbit}
for a general framework and examples.
\end{remark}

\subsection{The affine group and molecular decompositions}

Consider the affine group $\mathbb{R} \rtimes \mathbb{R}^\ast$,
with group law $(b, a)(b',a') = (b + a b', a a')$.
The representation $(\pi, L^2 (\mathbb{R}))$ of $\mathbb{R} \rtimes \mathbb{R}^{\ast}$,
given by
\[
  \pi(b,a) f = |a|^{-1/2} \cdot f \bigg( \frac{\cdot - b}{a} \bigg),
  \quad (b,a) \in \mathbb{R} \rtimes \mathbb{R}^\ast,
\]
is irreducible and square-integrable.
Moreover, it is integrable, meaning that there exists $\psi \in L^2 (\mathbb{R}) \setminus \{ 0 \}$
such that $V_{\psi} \psi \in L^1 (\mathbb{R} \rtimes \mathbb{R}^\ast)$,
where $V_{\psi} \psi$ is defined as in Section~\ref{sec:CoorbitSpaces}.

By choosing a particular index set, we can use Theorem~\ref{thm:AbstractCanonicalDualUniformityCondition}
to obtain the following result on wavelet frames and their \emph{canonical} dual frames.

\begin{theorem}\label{thm:affine_molecules}
  Let $w : \mathbb{R} \rtimes \mathbb{R}^\ast \to (0, \infty)$ be an admissible weight.
  Suppose that $\psi \in L^2 (\mathbb{R}) \setminus \{0\}$ satisfies
  $V_{\psi} \psi \in \Wstw (\mathbb{R} \rtimes \mathbb{R}^\ast)$.
  Then there exist $a > 1$ and $b > 0$ such that the vectors
  \[
    \psi_{j,k,t}
    := \pi(a^j t b k, a^j t) \psi
    = a^{-j/2} \cdot \psi \bigl(a^{-j} t \, (\cdot) - b k\bigr),
    \quad j,k \in \mathbb{Z}, t \in \{ \pm 1 \}
  \]
  form a frame
  for $L^2 (\mathbb{R})$ whose \emph{canonical} dual frame
  \(
    (\rho_{j,k,t} )_{j,k \in \mathbb{Z}, t \in \{ \pm 1 \}}
    \!=\! (\frameop^{-1} \psi_{j,k,t})_{j,k \in \mathbb{Z}, t \in \{ \pm 1 \}}
  \)
  in $L^2 (\mathbb{R})$ is a system of \emph{time-scale $w$-molecules}, i.e., there exists
  $\Phi \in \Wstw (\mathbb{R} \rtimes \mathbb{R}^\ast)$ such that
  \[
    \big| V_{\psi} \, \rho_{j,k,t} (b', a') \big|
    \leq \min
         \big\{
           \Phi \bigl(a^{-j} t \, b' - b k, \,\, a^{-j} t \, a'\bigr), \quad
           \Phi \bigl((a^j t b k - b') / a',\,\, a^j t / a' \bigr)
         \big\}
  \]
  for all $(b', a') \in \mathbb{R} \rtimes \mathbb{R}^\ast$, $j,k \in \mathbb{Z}$,
  and $t \in \{ \pm 1 \}$.
\end{theorem}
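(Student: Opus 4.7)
The plan is to apply Theorem~\ref{thm:AbstractCanonicalDualUniformityCondition} to the reproducing kernel Hilbert space $\RKHS := V_\psi(L^2(\mathbb{R})) \subset L^2(G)$ with $G := \mathbb{R} \rtimes \mathbb{R}^\ast$, and then transfer the conclusion back to $L^2(\mathbb{R})$ through the wavelet transform $V_\psi$. Exactly as in the proof of Theorem~\ref{thm:frame_coorbit}, the orthogonality relations identify the reproducing kernel of $\RKHS$ as
\[
  k\bigl((b,a),(b',a')\bigr) = \|C_\pi\psi\|^{-2} \cdot V_\psi\psi\bigl((b',a')^{-1}(b,a)\bigr).
\]
Hence $k(x,x) = \|C_\pi\psi\|^{-2}\|\psi\|_{L^2}^2$ is a positive constant, which gives (\Kone), while (\Ktwo) holds with envelope $\Theta := \|C_\pi\psi\|^{-2}|V_\psi\psi| \in \Wstw(G)$ by hypothesis. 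For (\Kthree) I would invoke Lemma~\ref{lem:SUCImpliesWUC} with $\Gamma \equiv 1$: since $k_y = \|C_\pi\psi\|^{-2} L_y V_\psi\psi$ and $V_\psi\psi \in L^1(G)$, strong continuity of left translation on $L^1(G)$ yields $\|k_x - k_y\|_{L^1} \to 0$ as $y^{-1}x \to e$.

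Theorem~\ref{thm:AbstractCanonicalDualUniformityCondition} then supplies a compact unit neighborhood $U \subset G$ and $\eps > 0$ such that whenever $\Lambda$ is relatively separated, $U$-dense, and satisfies $\uniformity(\Lambda;U) \leq 1 + \eps$, the reproducing kernels $(k_\lambda)_{\lambda \in \Lambda}$ form a frame for $\RKHS$ whose \emph{canonical} dual frame is a system of $w$-molecules. Since $U$ contains an open neighborhood of the identity $(0,1)$, I can choose $a > 1$ and $b > 0$ small enough that the half-open box $F := [0,b) \times [1,a)$ satisfies $F \subset U$, and set
\[
  \Lambda := \bigl\{(a^j t b k, a^j t) : j, k \in \mathbb{Z},\, t \in \{\pm 1\}\bigr\}.
\]
A direct calculation splitting on the sign of the dilation coordinate shows $G = \bigcupdot_{\lambda \in \Lambda} \lambda F$: the elements with $t=1$ partition $\mathbb{R} \times \mathbb{R}^+$ via the boxes $[a^j bk,\, a^j b(k+1)) \times [a^j, a^{j+1})$, and the elements with $t=-1$ partition $\mathbb{R} \times \mathbb{R}^-$ analogously (with the half-open conventions flipped). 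Consequently $\Lambda$ is relatively separated and $U$-dense, and taking the disjoint cover $U_\lambda := \lambda F \subset \lambda U$, left-invariance of Haar measure yields $\mu_G(U_\lambda) = \mu_G(F)$ for every $\lambda$, so $\uniformity(\Lambda;U) = 1 \leq 1 + \eps$.

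Let $(h_\lambda)_{\lambda \in \Lambda}$ denote the resulting canonical dual frame of $(k_\lambda)_{\lambda \in \Lambda}$ in $\RKHS$, which is a system of $w$-molecules with some envelope $\Phi \in \Wstw(G)$. To transfer the conclusion to $L^2(\mathbb{R})$ I would use the intertwining identity $V_\psi(\pi(y)\psi) = \|C_\pi\psi\|^2 \cdot k_y$ together with a short frame-operator manipulation to obtain $V_\psi \rho_{j,k,t} = h_\lambda$ for $\lambda = (a^j tbk, a^j t)$. This simultaneously yields the frame property of $(\psi_{j,k,t})$ in $L^2(\mathbb{R})$ and the envelope bound
\[
  |V_\psi\rho_{j,k,t}(b',a')| \leq \min\bigl\{\Phi(\lambda^{-1}(b',a')),\; \Phi((b',a')^{-1}\lambda)\bigr\}.
\]
A direct computation in the affine group gives $\lambda^{-1}(b',a') = (a^{-j}tb' - bk,\, a^{-j}ta')$ and $(b',a')^{-1}\lambda = ((a^j tbk - b')/a',\, a^j t/a')$, so this is exactly the bound claimed in the theorem.

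The only mildly delicate step is the bookkeeping for the tile partition $G = \bigcupdot \lambda F$ when $t = -1$, i.e., tracking the half-open boundary conventions under the reflection $a \mapsto -a$; everything else reduces to applying the general machinery already established in the paper.
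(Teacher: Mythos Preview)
Your proposal is correct and follows essentially the same approach as the paper: verify (\Kone), (\Ktwo), (\Kthree) for $\RKHS = V_\psi(L^2(\mathbb{R}))$ exactly as in Theorem~\ref{thm:frame_coorbit}, produce a quasi-lattice $\Lambda$ of uniformity~$1$ fitting inside the unit neighborhood supplied by Theorem~\ref{thm:AbstractCanonicalDualUniformityCondition}, and transfer the conclusion back via $V_\psi$. The only cosmetic difference is your choice of fundamental tile $F = [0,b) \times [1,a)$ versus the paper's symmetric box $[-b/2,b/2] \times [a^{-1/2},a^{1/2}]$; both yield $\uniformity(\Lambda;U) = 1$ for the same reason, and your explicit check that $V_\psi \rho_{j,k,t} = h_\lambda$ (via the intertwining $\frameop_\RKHS \circ V_\psi = \|C_\pi\psi\|^{-2}\, V_\psi \circ \frameop_{L^2}$) makes the transfer step slightly more transparent than the paper's one-line appeal to the isometry of $V_\psi$.
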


\begin{proof}
  As in the proof of Theorem~\ref{thm:frame_coorbit}, the image space $V_{\psi} (L^2 (\mathbb{R}))$
  is a RKHS satisfying conditions (\Kone), (\Ktwo), and (\Kthree).
  To apply Theorem~\ref{thm:AbstractCanonicalDualUniformityCondition},
  fix $a > 1, b > 0$, define the compact unit neighborhood
  $U  = [-\frac{b}{2}, \frac{b}{2}] \times [a^{-1/2}, a^{1/2}]$
  and let ${\Lambda =  \{ (a^j t b k, a^j t) : j,k \in \mathbb{Z} , t \in \{ \pm 1 \} \}}$.
  Then $\Lambda$ is easily seen to be relatively separated, $U$-dense and to have uniformity
  $\uniformity(\Lambda; U) = 1$.
  Note that if $U_0$ is an arbitrary unit neighborhood, then $U \subset U_0$ for suitable
  $a > 1$ and $b > 0$, and therefore $\uniformity(\Lambda; U_0) \leq \uniformity(\Lambda; U) = 1$.
  Overall, we thus see that an application of Theorem~\ref{thm:AbstractCanonicalDualUniformityCondition}
  yields $a > 1, b > 0$ such that $V_\psi [\pi (\Lambda) \psi]$ is a frame for $V_\psi (L^2(\R))$
  whose canonical dual frame consists of molecules.
  Since $V_\psi : L^2(\R) \to V_\psi [L^2(\R)]$ is isometric (up to a constant factor),
  this implies the claim.
\end{proof}

In contrast to Theorem~\ref{thm:frame_coorbit}, the dual frame in Theorem~\ref{thm:affine_molecules}
can be chosen to be the \emph{canonical} dual frame.
As in Theorem~\ref{thm:frame_coorbit}, the molecule property of the dual frame
implies that the \emph{canonical frame expansions}
\[
  f
  = \sum_{j,k \in \mathbb{Z}, t \in \{ \pm 1 \}}
      \langle f, \psi_{j,k,t} \rangle \rho_{j,k,t}
  = \sum_{j,k \in \mathbb{Z}, t \in \{ \pm 1 \}}
      \langle f, \rho_{j,k,t} \rangle \psi_{j,k,t}
\]
can be extended to coorbit spaces associated to the affine group,
such as the Besov-Triebel-Lizorkin spaces.

Theorem~\ref{thm:affine_molecules} comes close to \cite[Theorem~1.5]{memoir},
where the notion of time-scale molecule is described more concretely in terms of differentiability,
rate of decay, and number of vanishing moments.
See \cite[Section~4.2]{groechenig2009molecules} and \cite{Holschneider95}
for a comparison between different variants of the notion of time-scale molecule.

\subsection{Weighted spaces of analytic functions}

Theorems~\ref{thm:frame_main1} and \ref{thm:riesz_main} can also be used to produce
sampling or interpolating sets for certain Bergman spaces of analytic functions.
Example~\ref{ex_fock} discusses explicitly how Bargmann-Fock spaces of entire functions
with respect to general weights fit within the framework of this article;
see also \cite[Section~7.5]{jfa19} for more details on sampling and interpolation
in Fock spaces from the perspective of RKHS and molecules.

\appendix

\section{Postponed proofs}
\label{sec:PostponedProofs}

\subsection{Discrete sets}
\label{sub:DiscreteSetsAppendix}

\begin{lemma}\label{lem:nondiscrete_meanvalue}
  Let $G$ be a non-discrete $\sigma$-compact locally compact group with Haar measure $\mu_G$.
  For any Borel set $M \subset G$ and any $\delta \in [0, \mu_G (M)]$,
  there is a Borel set $M_{\delta} \subset M$ satisfying $\mu_G (M_{\delta}) = \delta$.
\end{lemma}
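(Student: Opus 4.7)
The plan is to show that Haar measure on a non-discrete locally compact group is non-atomic, and then invoke the classical intermediate-value principle of Sierpi\'{n}ski for non-atomic $\sigma$-finite Borel measures.

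First I would verify that $\mu_G(\{e\}) = 0$. Suppose for contradiction that $c := \mu_G(\{e\}) > 0$. By left-invariance, every singleton has mass $c$. Pick a compact neighborhood $V$ of $e$; then $V$ must be infinite, since otherwise---being Hausdorff and finite---$V$ would be discrete, forcing $e \in \mathrm{int}(V)$ to be isolated in $G$ and contradicting non-discreteness. Any countably infinite subset $\{y_n\}_{n \in \N} \subset V$ then yields $\mu_G(V) \geq \sum_n \mu_G(\{y_n\}) = \infty$, contradicting $\mu_G(V) < \infty$.

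I would next upgrade this to strong non-atomicity: for every Borel set $A \subset G$ with $0 < \mu_G(A) < \infty$ there exists a Borel $B \subset A$ with $0 < \mu_G(B) < \mu_G(A)$. By inner regularity, pick a compact $K \subset A$ with $\mu_G(K) > 0$; by outer regularity applied to the null set $\{e\}$, for any $\eps > 0$ there is an open neighborhood $U$ of $e$ with $\mu_G(U) < \eps$. Cover $K \subset \bigcup_{i=1}^N x_i U$ by finitely many translates; at least one intersection $K \cap x_i U$ has positive measure, bounded above by $\mu_G(U) < \eps$. Choosing $\eps < \mu_G(A)$ produces the desired $B$.

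Finally I would reduce to the finite-measure case and apply Sierpi\'{n}ski's theorem. If $\mu_G(M) = \delta = \infty$, take $M_\delta := M$. Otherwise, by $\sigma$-compactness write $G = \bigcup_n K_n$ with $K_n$ an increasing sequence of compact sets; then $\mu_G(M \cap K_n) \nearrow \mu_G(M)$, so some $M' := M \cap K_n \subset M$ satisfies $\delta \leq \mu_G(M') < \infty$. On the finite non-atomic Borel measure space $(M', \mu_G|_{M'})$, Sierpi\'{n}ski's theorem asserts that $\{\mu_G(B) : B \subset M'\ \text{Borel}\} = [0, \mu_G(M')]$, which yields a Borel $M_\delta \subset M' \subset M$ with $\mu_G(M_\delta) = \delta$. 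The main obstacle is the second step: the passage from $\mu_G(\{e\}) = 0$ to the strong form of non-atomicity that feeds into Sierpi\'{n}ski's theorem, which requires combining both inner and outer regularity of Haar measure with a finite-cover argument. The remaining ingredients are either classical (Sierpi\'{n}ski, itself provable by a greedy exhaustion that repeatedly peels off subsets of positive measure below a prescribed threshold, using the strong non-atomicity from the second step) or routine.
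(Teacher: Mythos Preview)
Your proof is correct and follows essentially the same approach as the paper: establish $\mu_G(\{e\})=0$, upgrade this to non-atomicity via outer regularity and a covering argument, and then invoke Sierpi\'nski's theorem. The only differences are cosmetic---you prove $\mu_G(\{e\})=0$ directly rather than citing a reference, you use inner regularity plus a finite cover of a compact subset where the paper uses $\sigma$-compactness and a countable cover of the whole atom, and you spell out the $\sigma$-finite reduction that the paper absorbs into its citation of Sierpi\'nski.
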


\begin{proof}
  Since $G$ is non-discrete, it follows by \cite[Proposition~1.4.4]{deitmar2014principles}
  that $\mu_G (\{e\}) = 0$.
  This implies that $\mu_G$ is atom free, since if $A \subset G$ is an atom of $\mu_G$,
  then (since $\mu_G$ is $\sigma$-finite), $0 < \eps := \mu(A) < \infty$,
  and by outer regularity of $\mu_G$ and since $\mu_G (\{ e \}) = 0$,
  there is an open unit neighborhood $U \subset G$ satisfying $\mu_G (U) \leq \eps/2$.
  Since $G$ is $\sigma$-compact, we have $G = \bigcup_{n=1}^\infty x_n U$
  for suitable $(x_n)_{n \in \N} \subset G$, and hence
  $0 < \mu_G(A \cap x_n U) \leq \mu_G (x_n U) \leq \eps/2 < \mu(A)$ for some $n \in \N$,
  which contradicts the fact that $A$ is an atom of $\mu_G$.

  Now, Sierpinski's theorem implies the claim;
  see \cite[Theorem~10.52]{AliprantisBorderHitchhiker}.
\end{proof}

\subsection{Amalgam spaces}
\label{sub:AmalgamAppendix}

We first provide a proof that $\WLw$ embeds into $L^\infty$.

\begin{lemma}\label{lem:LInftyEmbedding}
  There is a constant $C > 0$ satisfying
  $\| f \|_{L^\infty} \leq C \, \| f \|_{\WL} \leq C \, \| f \|_{\WLw}$
  for each admissible weight $w$ and measurable $f : G \to \CC$.
\end{lemma}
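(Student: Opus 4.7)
The second inequality $\| f \|_{\WL} \leq \| f \|_{\WLw}$ is immediate from $w \geq 1$, so the content of the lemma is the bound $\| f \|_{L^\infty} \leq C \| f \|_{\WL}$, and we may assume $f \in \WL(G) \subset L^\infty_{\loc}(G)$ since otherwise $\| f \|_{\WL} = \infty$. The natural candidate for the constant is $C = 1/\mu_G(Q)$, and I plan to derive this by exploiting the symmetry $Q = Q^{-1}$ together with the left-invariance of $\mu_G$.

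The key observation is that, since $Q$ is symmetric, the relations $x \in yQ$ and $y \in xQ$ are equivalent; therefore, heuristically, any $y \in xQ$ should "see" $|f(x)|$ through its maximal function. To make this rigorous, I would consider the set
\[
  E := \bigl\{ (x,y) \in G \times G \; : \; y \in xQ \text{ and } |f(x)| > \maxL f(y) \bigr\} ,
\]
which by symmetry of $Q$ coincides with $\{(x,y) : x \in yQ, \, |f(x)| > \maxL f(y)\}$. For each fixed $y \in G$, the $x$-slice of $E$ is $\{ x \in yQ \colon |f(x)| > \maxL f(y) \}$, which is a $\mu_G$-null set by the very definition of $\maxL f(y) = \esssup_{z \in yQ} |f(z)|$. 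Hence, by Fubini, $E$ is a $(\mu_G \otimes \mu_G)$-null set (measurability of $E$ uses that $\maxL f$ is Borel measurable as recorded at the start of Section~\ref{sub:Envelopes}), so for $\mu_G$-a.e.~$x \in G$, the $y$-slice $\{ y \in xQ \colon |f(x)| > \maxL f(y) \}$ is $\mu_G$-null.

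For such an $x$, integrating the pointwise inequality $|f(x)| \leq \maxL f(y)$ over $y \in xQ$ and using left-invariance of $\mu_G$ gives
\[
  \mu_G(Q) \cdot |f(x)|
  = \mu_G(xQ) \cdot |f(x)|
  \leq \int_{xQ} \maxL f(y) \, d\mu_G(y)
  \leq \| \maxL f \|_{L^1}
  = \| f \|_{\WL}.
\]
Taking the essential supremum over $x$ then yields $\| f \|_{L^\infty} \leq \mu_G(Q)^{-1} \, \| f \|_{\WL}$, completing the proof with $C := \mu_G(Q)^{-1}$, which is independent of $w$ and $f$.

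The only mild technical point is the Fubini step, which requires joint measurability of $(x,y) \mapsto \mathds{1}_{xQ}(y)$ and of $(x,y) \mapsto \maxL f(y)$; both follow from standard facts (the former from continuity of the group operation together with the openness of $Q$, the latter since $\maxL f$ is Borel). There is no substantive obstacle here.
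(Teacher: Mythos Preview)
Your proof is correct and takes a genuinely different route from the paper. The paper introduces an auxiliary symmetric open unit neighborhood $P$ with $PP \subset Q$, covers $G = \bigcup_n x_n P$ by $\sigma$-compactness, and then observes that for each $p \in P$ one has $x_n P \subset x_n p Q$, so $\|f\|_{L^\infty(x_n P)} \leq \maxL f(x_n p)$; averaging over $p \in P$ gives $\|f\|_{L^\infty(x_n P)} \leq \mu_G(P)^{-1} \|f\|_{\WL}$, and taking the supremum over $n$ yields $C = 1/\mu_G(P)$.

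Your Fubini argument is cleaner in that it works directly with $Q$ (no auxiliary neighborhood, no cover) and delivers the sharper constant $C = 1/\mu_G(Q)$, since $P \subset Q$. The trade-off is that you need to verify joint measurability of $E$ and invoke Fubini (which is fine here because $G$ is $\sigma$-compact, hence $\mu_G$ is $\sigma$-finite), whereas the paper's covering argument is completely elementary and sidesteps any measurability discussion. Both are short and valid; yours is arguably the more natural once one notices the symmetry $y \in xQ \Leftrightarrow x \in yQ$.
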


\begin{proof}
  Since $w \geq 1$, it suffices to prove $\| f \|_{L^\infty} \leq C \, \| f \|_{\WL}$.
  Choose a symmetric open unit neighborhood $P \subset G$ satisfying $P P \subset Q$.
  Since $G$ is $\sigma$-compact, there is a countable family $(x_n)_{n \in \N} \subset G$
  satisfying $G = \bigcup_{n=1}^\infty x_n P$.
  For $p \in P$, we have $x_n P \subset x_n p Q$, so that
  $\| f \|_{L^\infty (x_n P)} \leq \| f \|_{L^\infty(x_n p Q)} = \maxL f (x_n p)$.
  Averaging this over $p \in P$, we get
  \(
    \| f \|_{L^\infty (x_n P)}
    \leq \frac{1}{\mu_G (P)} \, \int_P \maxL f (x_n p) \, d \mu_G (p)
    \leq \frac{1}{\mu_G (P)} \, \| f \|_{\WL} .
  \)
  Since this holds for all $n \in \N$, and $G = \bigcup_{n=1}^\infty x_n P$,
  this implies the claim for $C = [\mu_G (P)]^{-1}$.
\end{proof}

We close this subsection with a proof of Lemma~\ref{lem:SynthesisOperatorQuantitativeBound}.

\begin{proof}[Proof of Lemma~\ref{lem:SynthesisOperatorQuantitativeBound}]

To prove \eqref{eq:StandardEstimateOne}, fix $x,y \in G$ and $\lambda \in \Lambda$.
For arbitrary $z \in \lambda Q$, we then have $\lambda^{-1} z \in Q$
and $z^{-1} \lambda \in Q^{-1} = Q$.
Therefore, $\lambda^{-1} x = \lambda^{-1} z z^{-1} x \in Q z^{-1} x$
and $y^{-1} \lambda = y^{-1} z z^{-1} \lambda \in y^{-1} z Q$.
Since $Q$ is open and the envelopes $\Phi,\Psi$ are continuous,
this implies $\Phi(\lambda^{-1} x) \leq \maxR \Phi (z^{-1} x)$,
as well as ${\Psi (y^{-1} \lambda) \leq \maxL \Psi (y^{-1} z)}$.
Taking the product of these two inequalities, averaging over $z \in \lambda Q$,
and summing over $\lambda \in \Lambda$, we thus see
\begin{align*}
  \sum_{\lambda \in \Lambda}
    \Phi(\lambda^{-1} x) \, \Psi(y^{-1} \lambda)
  & \leq \frac{1}{\mu_G (Q)}
         \sum_{\lambda \in \Lambda}
           \int_{\lambda Q}
             \maxL \Psi (y^{-1} z) \, \maxR \Phi (z^{-1} x)
           \, d \mu_G (z) \\
  & \leq \frac{\rel (\Lambda)}{\mu_G (Q)}
         \int_G
           \maxL \Psi(y^{-1} z) \, \maxR \Phi(z^{-1} x)
         \, d \mu_G (z) \\
  & =    \frac{\rel (\Lambda)}{\mu_G (Q)}
         (\maxL \Psi \ast \maxR \Phi) (y^{-1} x),
\end{align*}
as claimed.
Here, the second step is justified by the monotone convergence theorem
and Equation~\eqref{eq:RelativeSeparation}, and the final step used the change of variables
$t = y^{-1} z$. A similar argument proves \eqref{eq:StandardEstimateTwo}.

Lastly, if $\Theta$ is continuous, also $|\Theta^{\vee}|$ is continuous,
so that Equation~\eqref{eq:StandardEstimateTwo} applied to $\Psi = |\Theta^{\vee}|$ shows
\(
  \sum_{\lambda \in \Lambda}
    |\Theta(\lambda^{-1} x)|
  = \sum_{\lambda \in \Lambda}
      |\Theta^{\vee} (x^{-1} \lambda)|
  \leq C \, \| \Theta^{\vee} \|_{\WL}
\)
for $C := \frac{\rel(\Lambda)}{\mu_G(Q)}$.
Let ${c = (c_\lambda)_{\lambda \in \Lambda} \in \ell^2(\Lambda)}$.
By the Cauchy-Schwarz inequality and the preceding estimate,
it follows as required that
\begin{align*}
  \| D_{\Theta,\Lambda} \, c \|_{L^2}^2
  & \leq \int_{G}
         \bigg(
           \sum_{\lambda \in \Lambda}
             |c_\lambda| \, |\Theta(\lambda^{-1} x)|^{1/2} \, |\Theta(\lambda^{-1} x)|^{1/2}
         \bigg)^2
         \, d \mu_G (x) \\
  & \leq \int_G
           \bigg(
             \sum_{\lambda \in \Lambda}
               |c_\lambda|^2 \, |\Theta(\lambda^{-1} x)|
           \bigg)
           \bigg(
             \sum_{\lambda \in \Lambda}
               |\Theta(\lambda^{-1} x)|
           \bigg)
         \, d \mu_G (x) \\
  & \leq C \,
         \| \Theta^\vee \|_{\WL}
         \sum_{\lambda \in \Lambda}
                |c_\lambda|^2 \!
                \int_G \! |\Theta(\lambda^{-1} x)| \, d \mu_G (x)% \\
    =    C \,
         \| \Theta^\vee \|_{\WL} \,
         \| \Theta \|_{L^1} \,
         \| c \|_{\ell^2(\Lambda)}^2 .
\end{align*}
This completes the proof.
\end{proof}

\subsection{Localized integral kernels}
\label{sub:LocalizedIntegralKernelsAppendix}

In this subsection, we provide a proof for Lemma~\ref{lem:AdmissibleKernelProperties}.

\begin{proof}[Proof of Lemma~\ref{lem:AdmissibleKernelProperties}]
  Let $\Phi \in \WstCw (G)$ be an envelope for $H$.

  \medskip{}

  We first show that $T_H f (x) \in \CC$ is well-defined for $x \in G$ and $f \in L^p (G)$
  and that ${T_H : L^p (G) \to L^p(G)}$ is bounded.
  Since $\Phi$ is an envelope for $H$, we have $|H(x,\cdot)| \leq L_x \Phi$.
  Since $\Phi \in \WstCw (G) \subset L^1(G) \cap L^\infty(G) \subset L^{p'}(G)$,
  it follows that $L_x \Phi \in L^{p'}(G)$, and hence $H(x,\cdot) \in L^{p'}(G)$,
  since $H$ is measurable.
  Therefore, ${T_H (x) = \langle H(x,\cdot), f \rangle_{L^{p'}, L^p} \in \CC}$ is well-defined,
  with absolute convergence of the defining integral.
  Finally, the boundedness of ${T_H : L^p (G) \to L^p (G)}$
  is an easy consequence of Schur's test; see \cite[Theorem~6.18]{FollandRA}.

  \medskip{}

 (i) Let $f,g \in L^2(G)$ be arbitrary. Note that
  \begin{equation}
    \begin{split}
      & \int_G
          \int_G
            |H(x,y)| \, |f(y)| \, |g(x)|
          \, d \mu_G (y)
        \, d \mu_G (x) \\
      & \leq \int_{G \times G}
               [\Phi(x^{-1} y)]^{1/2} \, [\Phi(y^{-1}x)]^{1/2} \, |f(y)| \, |g(x)|
             \, d (\mu_G \otimes \mu_G)(x,y) \\
      & \leq \Big(
               \int_{G \times G} \!\!\!
                 \Phi (y^{-1} x) |f(y)|^2
               \, d (\mu_G \! \otimes \mu_G)(x,y)
             \Big)^{\frac{1}{2}}
             \Big(
               \int_{G \times G} \!\!\!
                 \Phi (x^{-1} y) |g(x)|^2
               \, d (\mu_G \! \otimes \mu_G)(x,y)
             \Big)^{\frac{1}{2}} \\
      & =    \Big(
               \int_G
                 |f(y)|^2 \cdot \| L_y \Phi \|_{L^1}
               \, d \mu_G (y)
             \Big)^{1/2}
             \Big(
               \int_G
                 |g(x)|^2 \cdot \| L_x \Phi \|_{L^1}
               \, d \mu_G (x)
             \Big)^{1/2}
        < \infty .
    \end{split}
    \label{eq:FubiniL2Justification}
  \end{equation}
  As shown above, the map $T_H$ is well-defined from $L^2(G)$ into $L^2 (G)$.
  Let $f,g \in L^2(G)$ with $g \perp \RKHS$ be arbitrary.
  The above calculation justifies an application of Fubini's theorem, and hence
  \begin{align*}
    \langle T_H f, g \rangle
    & = \int_G
          \overline{g(x)}
          \int_G
            H(x,y) \, f(y)
          \, d \mu_G (y)
        \, d \mu_G (x)
      = \int_G
          f(y) \cdot \langle H(\cdot,y), g \rangle
        \, d \mu_G (y)
      = 0,
  \end{align*}
  where the last equality follows since $H(\cdot,y) \in \RKHS$ and $g \perp \RKHS$.
  Thus, $T_H f \in \RKHS$.

  \medskip{}

  \noindent
  (ii) Since $H(\cdot, t) \in \RKHS$ and $\overline{H(x,\cdot)} \in \RKHS$,
  we see $\langle H(\cdot,t) , k_x \rangle \!=\! H(x,t)$
  and ${\langle \overline{H(x,\cdot)}, k_y \rangle \!=\! \overline{H(x,y)}}$.
  Thus, an application of Fubini's theorem (which is justified by \eqref{eq:FubiniL2Justification})
  gives
  \begin{align*}
    \langle T_H k_y, k_x \rangle
    & = \int_G
          k_y (t)
          \int_G
            H(z,t) \overline{k_x (z)}
          \, d \mu_G (z)
        \, d \mu_G (t)
     = \int_G
          k_y (t) \,
          \langle H(\cdot,t), k_x \rangle
        \, d \mu_G (t) \\
     &=
        \int_G
          k_y (t) \cdot H(x,t)
        \, d \mu_G (t)
     = \overline{
          \langle \overline{H(x,\cdot)}, k_y \rangle
        }
        =
        \overline{
          \overline{H(x,y)}
        }
      = H(x,y) .
  \end{align*}

  \noindent
  (iii) This is an immediate consequence of the definitions.

  \medskip{}

  \noindent
  (iv) Let $\Phi' \in \Wstw(G)$ be an envelope for $L$.
  Then
  \begin{align*}
    |H \odot L (x,y)|
    &      \leq \int_G
             \Phi (z^{-1} x) \, \Phi'(y^{-1} z)
           \, d \mu_G (z)
           =    \int_G
                  \Phi'(t) \, \Phi(t^{-1} y^{-1} x)
                \, d \mu_G (t)
      = \Phi' \ast \Phi (y^{-1} x) .
  \end{align*}
  Similarly, it follows that
  \(
    |H \odot L (y,x)|
          \leq \int_G
             \Phi (y^{-1} z) \, \Phi'(z^{-1} x)
           \, d \mu_G (z)
      = \Phi \ast \Phi' (y^{-1} x).
  \)
  Thus $\Psi := \Phi' \ast \Phi + \Phi \ast \Phi'$ is an envelope
  for $H \odot L$, and $\Psi \in \WstCw(G)$ by \eqref{eq:amalgam_convolution}.
  The above calculation also shows that $H \odot L (x,y) \in \CC$ is well-defined
  for all $x,y \in G$.

  Note that $H \odot L (\cdot,y) = T_H [L(\cdot,y)] \in \RKHS$ by Part~(i),
  since $L(\cdot,y) \in \RKHS \subset L^2(G)$, and $T_H : L^2(G) \to \RKHS$.
  Moreover,
  \[
    H \odot L (x,y)
    = \int_G
        \overline{\widetilde{L} (y,z)} \,\,
        \overline{ \overline{H} (x,z)}
      \, d \mu_G (z)
    = \overline{\big( T_{\widetilde{L}} [\overline{H(x,\cdot)}] \big)(y)} ,
  \]
  so that $\overline{H \odot L (x, \cdot)} = T_{\widetilde{L}} [\overline{H(x,\cdot)}] \in \RKHS$,
  since $\overline{H(x,\cdot)} \in \RKHS \subset L^2(G)$ and $T_{\widetilde{L}} : L^2(G) \to \RKHS$
  by Parts~(i) and (iii).
  Overall, this shows that $H \odot L$ is $w$-localized in $\RKHS$.

  \medskip{}

  To show that $T_H \circ T_L = T_{H \odot L}$,
  let $p \in \{1,\infty\}$.
  Then Fubini's theorem shows that
  \begin{align*}
    [T_H (T_L f)](x)
    & = \int_G
          H(x,y)
          \int_G
            L(y,z) \, f(z)
          \, d\mu_G (z)
        \, d \mu_G(y) \\
    & = \int_G
          f(z)
          \int_G
            H(x,y) \, L(y,z)
          \, d \mu_G (y)
        \, d \mu_G (z) \\
    & = \int_G
          f(z) \cdot (H \odot L)(x,z)
        \, d \mu_G (z)
      = T_{H \odot L} f (x)
  \end{align*}
  for $f \in L^p(G)$.
  Applying Fubini's theorem is justified in case of $p = \infty$ since
  \begin{align*}
    & \int_G
        |f(z)|
        \int_G
          |H(x,y)| \, |L(y,z)|
        \, d \mu_G (y)
      \, d \mu_G (z) \\
    & \leq \| f \|_{L^\infty}
           \int_G
             \Phi(x^{-1} y)
             \int_G
               \Phi'(y^{-1} z)
             \, d \mu_G (z)
           \, d\mu_G (y)
      =    \| f \|_{L^\infty} \, \| \Phi \|_{L^1} \, \| \Phi' \|_{L^1}
      < \infty ,
  \end{align*}
  and in case of $p = 1$ since
  \begin{align*}
    & \int_G
        |f(z)|
        \int_G
          |H(x,y)| \, |L(y,z)|
        \, d \mu_G (y)
      \, d \mu_G (z)
       \leq \| \Phi' \|_{L^\infty} \, \| \Phi \|_{L^1} \, \| f \|_{L^1}.
  \end{align*}
  The case of general $p \in (1,\infty)$ follows since $L^p(G) \subset L^1(G) + L^\infty(G)$.
  Finally, a direct calculation gives
  \[
    \widetilde{H \odot L} (x,y)
    = \overline{
        \int_G
          H(y,z) \, L (z,x)
        \, d \mu_G (z)
      }
    = \int_G
        \widetilde{L} (x,z) \, \widetilde{H}(z,y)
      \, d \mu_G (z)
    = \widetilde{L} \odot \widetilde{H} (x,y)
  \]
  for all $x,y \in G$.
  This completes the proof.
\end{proof}

\subsection{Convolution-dominated matrices}
\label{sub:convolutionmatrix_appendix}

\begin{proof}[Proof of Proposition \ref{prop:GoodMatrixSpaceReasonable}]
(i) The subadditivity and absolute homogeneity of $\| \cdot \|_{\goodMatrices}$
are immediate.
For the positive definiteness of $\| \cdot \|_{\goodMatrices}$, note that if
\(
  \goodMatrices(\Gamma,\Lambda)
  \ni M
  = (M_{\lambda,\gamma})_{\lambda \in \Lambda, \gamma \in \Gamma}
  \dominated \Theta,
\)
then
\(
  |M_{\lambda,\gamma}|
  \leq \Theta (\lambda^{-1} \gamma)
  \leq \| \Theta \|_{L^\infty}
  \lesssim \| \Theta \|_{\WstCw}
\)
by Lemma~\ref{lem:LInftyEmbedding}.
Thus,
\(
  |M_{\lambda,\gamma}| \lesssim \| M \|_{\goodMatrices}
\)
for all $\lambda \in \Lambda$ and $\gamma \in \Gamma$,
showing that if $\| M \| = 0$, then $M = 0$.

For the completeness, it suffices to show that if $(M^{(n)})_{n \in \N}$
satisfies $\sum_{n=1}^\infty \| M^{(n)} \|_{\goodMatrices} < \infty$, then the series
$\sum_{n=1}^\infty M^{(n)}$ is norm convergent in $\goodMatrices(\Gamma,\Lambda)$.
By the argument in the last paragraph, the series defining
$M_{\lambda,\gamma} := \sum_{n=1}^\infty M^{(n)}_{\lambda,\gamma}$
converges (absolutely) for all $\gamma \in \Gamma$ and $\lambda \in \Lambda$.
We claim that
\({
  M
  := (M_{\lambda,\gamma})_{\lambda \in \Lambda, \gamma \in \Gamma}
  \in \goodMatrices(\Gamma,\Lambda)
}\)
and that $M = \sum_{n=1}^\infty M^{(n)}$ with norm convergence in
$\goodMatrices(\Gamma, \Lambda)$.

For each $n \in \N$, choose a non-negative envelope $\Theta_n \in \strongWiener(G)$
satisfying $M^{(n)} \dominated \Theta_n$ and
${\| \Theta_n \|_{\strongWiener} \leq 2 \, \| M^{(n)} \|_{\goodMatrices}}$.
Define $\Phi_N := \sum_{n=N+1}^\infty \Theta_n$ for $N \in \N_0$.
Then $\Phi_N \in \strongWiener(G)$ since $\strongWiener(G)$ is complete and
\(
  \sum_{n=1}^\infty \| \Theta_n \|_{\strongWiener}
  \leq 2 \sum_{n=1}^\infty \| M^{(n)} \|_{\goodMatrices}
  < \infty.
\)
A direct calculation shows
\begin{align*}
  \Big|
    \Big( M - \sum_{n=1}^N M^{(n)} \Big)_{\lambda, \gamma}
  \Big|
  \leq \sum_{n=N+1}^\infty
         \min \big\{ \Theta_n (\gamma^{-1} \lambda), \Theta_n (\lambda^{-1} \gamma) \big\}
  \leq \min \big\{ \Phi_N (\gamma^{-1} \lambda), \Phi_N (\lambda^{-1} \gamma) \big\} ,
\end{align*}
and thus $M - \sum_{n=1}^N M^{(n)} \in \goodMatrices(\Gamma,\Lambda)$, with
\[
  \Big\|
   M - \sum_{n=1}^N M^{(n)}
  \Big\|_{\goodMatrices}
  \leq \| \Phi_N \|_{\strongWiener}
  \leq \sum_{n=N+1}^\infty
         \| \Theta_n \|_{\strongWiener}
  \to 0
\]
as $N \to \infty$.
This completes the proof of Part~(i).

\medskip{}

(ii) Let $\Phi \in \strongWiener(G)$ be an envelope for
\(
  M
  = (M_{\lambda,\gamma})_{\lambda \in\Lambda,\gamma \in \Gamma}
  \in \goodMatrices(\Gamma,\Lambda)
\).
Thanks to Equation~\eqref{eq:StandardEstimateTwo}, we see that
\(
  \sum_{\lambda \in \Lambda}
    |M_{\lambda, \gamma}|
  \leq \sum_{\lambda \in \Lambda}
         \Phi(\gamma^{-1} \lambda)
  \leq \frac{\rel(\Lambda)}{\mu_G(Q)} \, \| \Phi \|_{\Wstw}
\)
and similarly
\(
  \sum_{\gamma \in \Gamma}
    |M_{\lambda, \gamma}|
  \leq \sum_{\gamma \in \Gamma}
         \Phi(\lambda^{-1} \gamma)
  \leq \frac{\rel(\Gamma)}{\mu_G(Q)} \, \| \Phi \|_{\Wstw} .
\)
Since this holds for all envelopes $\Phi \in \strongWiener(G)$ for $M$,
this proves \eqref{eq:GoodMatricesAreSchur}.

\medskip{}

(iii) This follows directly by combining Schur's test (see \cite[Theorem~6.18]{FollandRA})
with Part~(ii).

\medskip{}

(iv) Choose non-negative $\Phi, \Phi' \in \controlFunctionSpace (G)$ which satisfy
$M = (M_{\omega,\gamma})_{\omega \in \Omega, \gamma \in \Gamma} \dominated \Phi$
and ${N = (N_{\gamma, \lambda})_{\gamma \in \Gamma, \lambda \in \Lambda} \dominated \Phi'}$.
Then, Equation~\eqref{eq:StandardEstimateOne} shows for arbitrary
$\omega \in \Omega$ and $\lambda \in \Lambda$ that
\[
  |(M \, N)_{\omega, \lambda}|
  \leq \sum_{\gamma \in \Gamma}
         |M_{\omega, \gamma}| \, |N_{\gamma, \lambda}|
  \leq \sum_{\gamma \in \Gamma}
         \Phi(\gamma^{-1} \omega) \, \Phi' (\lambda^{-1} \gamma)
  \leq \frac{\rel(\Gamma)}{\mu_G (Q)}
       (\maxL \Phi' \ast \maxR \Phi) (\lambda^{-1} \omega)
\]
and similarly
\[
  |(M \, N)_{\omega, \lambda}|
  \leq \sum_{\gamma \in \Gamma}
         |M_{\omega, \gamma}| \, |N_{\gamma, \lambda}|
  \leq \sum_{\gamma \in \Gamma}
         \Phi'(\gamma^{-1} \lambda) \Phi(\omega^{-1} \gamma)
  \leq \frac{\rel(\Gamma)}{\mu_G (Q)}
       (\maxL \Phi \ast \maxR \Phi') (\omega^{-1} \lambda) .
\]
Let
\(
  C := \frac{\rel(\Gamma)}{\mu_G(Q)}
\)
and
\(
  \Psi := C \cdot \big[ (\maxL \Phi') \ast (\maxR \Phi) \big]
          + C \cdot \big[ (\maxL \Phi) \ast (\maxR \Phi') \big].
\)
The above calculations show that
\(
  |(MN)_{\omega,\lambda}|
  \leq \min \{ \Psi(\lambda^{-1} \omega), \Psi(\omega^{-1} \lambda) \}.
\)
Equation~\eqref{eq:amalgam_convolution} shows that $\Psi \in \strongWiener(G)$, with
\(
  \| \Psi \|_{\strongWiener}
  \leq 2C \cdot \| \Phi' \|_{\strongWiener} \cdot \| \Phi \|_{\strongWiener}.
\)
Therefore, it follows that $M \, N \dominated \Psi$
and ${M \, N \in \goodMatrices(\Lambda,\Omega)}$, with
\(
  \| MN\|_{\goodMatrices}
  \leq \| \Psi \|_{\strongWiener}.
\)
Since $\Phi, \Phi' \in \strongWiener(G)$ with $M \dominated \Phi$
and $N \dominated \Phi'$ were chosen arbitrarily, the conclusion follows.
\end{proof}

\subsection{Coorbit theory miscellany}%
\label{sub:CoorbitDensityArgument}

We give an alternative proof of  \cite[Lemma~3.4]{groechenig2009molecules}
regarding the boundedness of the coefficient and reconstruction operators
associated to systems of molecules.
We do this to keep the paper self-cointained and
since the assumptions in \cite{groechenig2009molecules} concerning the space $Y$
are slightly different from this paper.

\begin{lemma}\label{lem:CoefficientReconstructionBounded}
  Let ${w : G \to [1,\infty)}$ be continuous and submultiplicative
  satisfying \eqref{eq:ControlWeightSymmetry},
  and let $Y \subset L_{\loc}^1 (G)$ be a solid translation-invariant Banach space
  satisfying \eqref{eq:CoorbitConvolutionRelation}.
  Let $g \in \Hpi \setminus \{ 0 \}$ satisfy ${V_g g \in \WLw (G)}$,
  let $\Lambda$ be relatively separated in $G$, and let $(f_\lambda)_{\lambda \in \Lambda} \subset \Hpi$
  satisfy \eqref{eq:CoorbitMoleculeCondition} for some $\Phi \in \Wstw (G)$.
  Finally, assume that $Y_d(\Lambda) \hookrightarrow \ell_{1/w}^\infty (\Lambda)$.

  Then the operators
  \[
    \analysis : \quad
    \Co(Y) \to Y_d(\Lambda), \quad
    f \mapsto \bigl( \langle f, f_\lambda \rangle \bigr)_{\lambda \in \Lambda}
  \]
  and
  \[
    \synthesis : \quad
    Y_d (\Lambda) \to \Co(Y), \quad
    c = (c_\lambda)_{\lambda \in \Lambda} \mapsto \sum_{\lambda \in \Lambda} c_\lambda \, f_\lambda
  \]
  are well-defined and bounded.
  The series defining $\synthesis c$ converges unconditionally
  with respect to the weak-$\ast$-topology on $\Reservoir_w$.
\end{lemma}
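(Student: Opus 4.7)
The plan is to reduce everything to two convolution-type estimates and then invoke the convolution relation $Y \ast L^1_w \hookrightarrow Y$ from \eqref{eq:CoorbitConvolutionRelation} together with the translation invariance of $Y$. The key preliminary observation is that the molecule condition \eqref{eq:CoorbitMoleculeCondition} forces $\|V_g f_\lambda\|_{L^1_w} \leq \|\Phi\|_{L^1_w} < \infty$ uniformly in $\lambda$, so every $f_\lambda$ lies in $\TestVectors$ and the pairing $\langle f, f_\lambda\rangle$ is well-defined for each $f \in \Reservoir_w \supset \Co(Y)$. Similarly, the formal series $\synthesis c$ can at least be tested against elements of $\TestVectors$.

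For the coefficient operator, the correspondence principle of coorbit theory yields
\[
  \langle f, f_\lambda \rangle
  = \|C_\pi g\|_{\Hpi}^{-2} \int_G V_g f(x)\,\overline{V_g f_\lambda(x)}\,d\mu_G(x),
  \qquad f \in \Reservoir_w,\; \lambda \in \Lambda,
\]
which is proved by extending the reproducing formula $V_g h = \|C_\pi g\|_{\Hpi}^{-2} (V_g h \ast V_g g)$ from $\Hpi$ to $\Reservoir_w$. Combining this with $|V_g f_\lambda(x)| \leq \Phi(\lambda^{-1} x)$ gives $|\langle f, f_\lambda\rangle| \lesssim (|V_g f| \ast \Phi^\vee)(\lambda)$. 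Using Lemma~\ref{lem:SynthesisOperatorQuantitativeBound} and the relative separation of $\Lambda$, one obtains a pointwise bound
\[
  \sum_{\lambda \in \Lambda} |\langle f, f_\lambda \rangle|\cdot \indicator_{\lambda Q}(x)
  \lesssim \bigl(|V_g f| \ast \Psi\bigr)(x)
\]
for some $\Psi \in L^1_w(G)$ built from $M_Q$-applied variants of $\Phi^\vee$; taking the $Y$-norm and applying \eqref{eq:CoorbitConvolutionRelation} gives $\|\analysis f\|_{Y_d} \lesssim \|V_g f\|_Y = \|f\|_{\Co(Y)}$.

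For the reconstruction operator, fix $c \in Y_d(\Lambda)$. The same molecule bound gives the pointwise estimate $\sum_\lambda |c_\lambda|\,|V_g f_\lambda(x)| \leq \sum_\lambda |c_\lambda|\,\Phi(\lambda^{-1} x)$, which by averaging each $|c_\lambda|$ over $\lambda Q$ and a mild maximal-function argument is dominated by $(F_c \ast \Psi')(x)$ with $F_c := \sum_\lambda |c_\lambda|\,\indicator_{\lambda Q}$ and $\Psi' \in L^1_w(G)$. Hence $\|V_g[\synthesis c]\|_Y \lesssim \|F_c\|_Y = \|c\|_{Y_d}$. For the weak-$\ast$ convergence, we pair against $\varphi \in \TestVectors$ and estimate $|\langle f_\lambda, \varphi\rangle| \lesssim (\Phi \ast |V_g\varphi|^\vee)(\lambda^{-1})$ or equivalently a convolution of the right type; using $|V_g\varphi| \in L^1_w$ together with the assumption $Y_d(\Lambda) \hookrightarrow \ell^\infty_{1/w}(\Lambda)$, the scalar series $\sum_\lambda c_\lambda \langle f_\lambda, \varphi\rangle$ converges absolutely. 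A standard $\Reservoir_w$-continuity argument then identifies $\sum_\lambda c_\lambda f_\lambda$ as an element of $\Co(Y)$ with $V_g$-transform the function already bounded.

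The main obstacle is technical rather than conceptual: one must carefully extend the correspondence identity beyond $\Hpi$ to the antidual $\Reservoir_w$, and one must justify the weak-$\ast$ convergence of the reconstruction series rather than convergence in $\Co(Y)$ (which may fail when $Y$ is non-reflexive). Both steps are standard in coorbit theory; the point of the present lemma is to combine them with the molecule condition \eqref{eq:CoorbitMoleculeCondition} so that the relevant convolution bounds hold with an envelope in $\Wstw(G) \subset L^1_w(G)$, after which \eqref{eq:CoorbitConvolutionRelation} closes both estimates at once.
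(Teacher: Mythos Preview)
Your proposal is correct and follows essentially the same approach as the paper: both arguments reduce the boundedness of $\analysis$ and $\synthesis$ to convolution estimates of the form $|V_g f| \ast \Psi$ and $(\sum_\lambda |c_\lambda|\indicator_{\lambda Q}) \ast \Psi'$ with $\Psi,\Psi' \in L^1_w(G)$ obtained from the molecule envelope via a maximal-function/averaging trick, and then invoke \eqref{eq:CoorbitConvolutionRelation}; the weak-$\ast$ convergence is handled in both cases by passing to $L^\infty_{1/w}$ via the assumption $Y_d(\Lambda) \hookrightarrow \ell^\infty_{1/w}(\Lambda)$. The only cosmetic difference is that the paper uses the bound $|V_g f_\lambda(x)| \leq \Phi(x^{-1}\lambda)$ for $\analysis$ (yielding $F \ast \Phi$ directly) whereas you use the other side of the molecule condition (yielding $F \ast \Phi^\vee$), which is equivalent since \eqref{eq:ControlWeightSymmetry} gives $\|\Phi^\vee\|_{L^1_w} = \|\Phi\|_{L^1_w}$.
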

\begin{proof}
We split the proof into four steps.

  \textbf{Step 1:} Let $\Phi : G \to [0,\infty)$ be continuous.
  Then,since $Q$ is open and $\Phi$ is continuous,
  we have $\Phi(x) = \Phi(q q^{-1} x) \leq (M_Q^R \Phi)(q^{-1} x)$ for all $q \in Q$.
  This shows that
  \begin{equation} \label{eq:PhiDominatedByConvolution}
    \mu_G (Q) \cdot \Phi(x)
    \leq \int_G \indicator_Q (q) \cdot (M_Q^R \Phi)(q^{-1} x) \, d \mu_G (q)
    =    (\indicator_Q \ast M_Q^R \Phi) (x) .
  \end{equation}

  \medskip{}

  \textbf{Step 2:}
  Define $F_\lambda := V_g f_\lambda$ and note that \eqref{eq:CoorbitMoleculeCondition}
  and \eqref{eq:PhiDominatedByConvolution} imply
  \[
    \mu_G (Q) \cdot |F_\lambda|
    \leq \mu_G (Q) \cdot L_\lambda \Phi
    \leq L_\lambda (\indicator_Q \ast M_Q^R \Phi)
    =    \indicator_{\lambda Q} \ast M_Q^R \Phi,
  \]
  since $L_x (\indicator_Q \ast F) = \indicator_{x Q} \ast F$.
  Therefore, we see for $c = (c_\lambda)_{\lambda \in \Lambda} \in Y_d (\Lambda)$ that
  \[
    \mu_G (Q)
    \sum_{\lambda \in \Lambda}
      |c_\lambda| \, |F_\lambda|
    \leq \Big(
           \sum_{\lambda \in \Lambda}
             |c_\lambda| \, \indicator_{\lambda Q}
         \Big)
         \ast M_Q^R \Phi .
  \]
  By definition of $Y_d(\Lambda)$, we have $H_c :=  \sum_{\lambda \in \Lambda}
             |c_\lambda| \, \indicator_{\lambda Q} \in Y$.
  By the convolution relation \eqref{eq:CoorbitConvolutionRelation} and because of
  ${M_Q^R \Phi \in L_w^1(G)}$, this implies $H_c \ast M_Q^R \Phi \in Y$ as well,
  with norm estimate $\| H_c \ast M_Q^R \Phi \|_Y \lesssim \| H_c \|_Y = \| c \|_{Y_d}$.
  By the solidity of $Y$, this implies that the  map
  \[
    \synthesis_0 : \quad
    Y_d (\Lambda) \to Y, \quad
    (c_\lambda)_{\lambda \in \Lambda} \mapsto \sum_{\lambda \in \Lambda}
                                                c_\lambda \, V_g f_\lambda .
  \]
is well-defined and bounded. Moreover, we claim that
  \begin{equation}
    \Big\|
      \sum_{\lambda \in \Lambda}
        |c_\lambda| \, |V_g f_\lambda| \,
    \Big\|_{L_{1/w}^\infty}
    \lesssim \| c \|_{\ell_{1/w}^\infty (\Lambda)}
    \lesssim \| c \|_{Y_d} .
    \label{eq:SynthesisOperatorPointwiseBound}
  \end{equation}
  To see this, let $Z := L_{1/w}^\infty (G)$, noting that $Z$ also satisfies
  the assumptions of the lemma.
  Furthermore, \mbox{\cite[Lemma~3.5]{feichtinger1989banach1}} shows
  that $Z_d(\Lambda) = \ell_{1/w}^\infty (\Lambda)$, with equivalent norms.
  Thus, we can apply the above reasoning with $Z$ instead of $Y$ to see
  \(
    \big\|
      \sum_{\lambda \in \Lambda}
        |c_\lambda| \, |F_\lambda|
    \big\|_{L_{1/w}^\infty}
    \lesssim \| c \|_{\ell_{1/w}^\infty}
    .
  \)

  \medskip{}

  \textbf{Step 3:} In this step, we prove the boundedness and well-definedness of $\synthesis$.
  To this end, let ${c = (c_\lambda)_{\lambda \in \Lambda} \in Y_d (\Lambda)}$ be arbitrary.
  For $f \in \TestVectors \subset \Hpi$, the orthogonality relation \eqref{eq:ortho_relations_a}
  shows because of $f_\lambda \in \Hpi$ that
  \[
    |\langle f_\lambda, f \rangle|
    = \| C_\pi g \|_{\Hpi}^{-2} \cdot |\langle V_g f_\lambda, V_g f \rangle_{L^2}|
    \lesssim \int_{G} |F_\lambda(x)| \cdot |F(x)| \, d \mu_G (x) ,
  \]
  with $F := V_g f \in L_w^1(G)$ and $F_\lambda = V_g f_\lambda$ as in Step~2.
  In combination with Equation~\eqref{eq:SynthesisOperatorPointwiseBound}, this implies
  \[
    \sum_{\lambda \in \Lambda}
      |c_\lambda| \cdot | \langle \, f_\lambda, f \rangle |
    \lesssim \| F \|_{L_w^1}
             \cdot \Big\|
                     \sum_{\lambda \in \Lambda}
                       |c_\lambda| \, |F_\lambda|
                   \Big\|_{L_{1/w}^\infty}
    \lesssim \| V_g f \|_{L_w^1} \cdot \| c \|_{Y_d}
    =        \| f \|_{\TestVectors} \cdot \| c \|_{Y_d} .
  \]
  Thus the series
  $\sum_{\lambda \in \Lambda} c_\lambda \langle f_\lambda, f \rangle$
  converges absolutely for every ${f \in \TestVectors}$, so that the antilinear funtional
  $\synthesis c = \sum_{\lambda \in \Lambda} c_\lambda \, f_\lambda \in \Reservoir_w$ is well-defined,
  with unconditional convergence in the weak-$\ast$-topology.
  Finally, we have that $V_g [\synthesis c] = \sum_{\lambda \in \Lambda} c_\lambda V_g f_\lambda = \synthesis_0 c$,
  and hence
  \({
    \| \synthesis c \|_{\Co (Y)}
    = \| V_g [\synthesis c] \|_{Y}
    = \| \synthesis_0 c \|_{Y}
    \lesssim \| c \|_{Y_d},
  }\)
  as seen in Step~2.

  \medskip{}

  \textbf{Step 4:} We prove the boundedness of $\analysis$.
  By possibly rescaling $g$, we can assume without loss of generality that $\| C_\pi g \|_{\Hpi} = 1$.

  Note that \eqref{eq:CoorbitMoleculeCondition} shows ${V_g f_\lambda \in L_w^1(G)}$,
  meaning $f_\lambda \in \TestVectors$, so that
  ${c_\lambda (f) := \langle f, f_\lambda  \rangle \in \CC}$
  is well-defined for every $\lambda \in \Lambda$ and $f \in \Co(Y) \subset \Reservoir_w$.
  Furthermore, \cite[Equation~(4.9)]{feichtinger1989banach1}
  shows that if we write $V_g^\ast : L_{1/w}^\infty (G) \to \Reservoir_w$
  for the adjoint of ${V_g : \TestVectors \to L_w^1(G)}$,
  then $V_g^\ast V_g f = f$ for all $f \in \Reservoir_w$.
  Therefore, setting $F := |V_g f| \in Y$ and using \eqref{eq:CoorbitMoleculeCondition},
   we see that
  \[
    |c_\lambda (f)|
    = |\langle V_g^\ast V_g f, f_\lambda \rangle|
    = |\langle V_g f, V_g f_\lambda \rangle|
    \leq \int_G F(x) \Phi(x^{-1} \lambda) \, d \mu_G(x)
    =    (F \ast \Phi)(\lambda) .
  \]

  Note that if $x \in \lambda Q$, then $\lambda \in x Q$, say $\lambda = x q$.
  Since $Q$ is open and $\Phi$ continuous, this implies for arbitrary $y \in G$ that
  $\Phi(y^{-1} x q) \leq M_Q \Phi (y^{-1} x)$, and therefore
  \[
    F \ast \Phi (\lambda)
    =    \int_G F(y) \, \Phi(y^{-1} x q) \, d \mu_G (y)
    \leq \int_G F(y) M_Q \Phi (y^{-1} x) \, d \mu_G (y)
    =    (F \ast M_Q \Phi) (x) ,
  \]
  provided that $x \in \lambda Q$.
  Thus,
  \(
    |c_\lambda (f)| \, \indicator_{\lambda Q}(x)
    \leq (F \ast \Phi)(\lambda) \, \indicator_{\lambda Q} (x)
    \leq \indicator_{\lambda Q}(x) \cdot (F \ast M_Q \Phi) (x) ,
  \)
  and hence
  \begin{align*}
    \big\| (c_\lambda (f))_{\lambda \in \Lambda} \big\|_{Y_d}
    & = \Big\|
          \sum_{\lambda \in \Lambda}
            |c_\lambda (f)| \, \indicator_{\lambda Q}
        \Big\|_{Y}
      \leq \Big\|
             (F \ast M_Q \Phi) \cdot \sum_{\lambda \in \Lambda} \indicator_{\lambda Q}
           \Big\|_{Y} \\
    & \leq \rel (\Lambda) \cdot \| F \ast M_Q \Phi \|_Y
      \lesssim \| F \|_Y
      =        \| f \|_{\Co (Y)} .
  \end{align*}
  where the last inequality used the convolution relation
  \eqref{eq:CoorbitConvolutionRelation} and that $M_Q \Phi \in L_w^1(G)$.
\end{proof}

The next lemma contains the technical density arguments used in the proofs of Theorems~\ref{thm:frame_coorbit}
and \ref{thm:riesz_coorbit}.

\begin{lemma}\label{lem:WeakStarContinuity}
  Let $Y \subset L_{\loc}^1(G)$ be a solid, translation-invariant Banach space
  with control weight $w$.
  Let $g \in \Hpi \setminus \{ 0 \}$ satisfy $V_g g \in \WLw (G)$,
  let $\Lambda$ be relatively separated in $G$,
  and let $(f_\lambda)_{\lambda \in \Lambda}, (h_\lambda)_{\lambda \in \Lambda} \subset \Hpi$
  both satisfy \eqref{eq:CoorbitMoleculeCondition}.
  Denote by $\analysis,\synthesis$ the operators from Lemma~\ref{lem:CoefficientReconstructionBounded}
  associated to $(f_\lambda)_{\lambda \in \Lambda}$, and by $\widetilde{\analysis}, \widetilde{\synthesis}$
  those associated to $(h_\lambda)_{\lambda \in \Lambda}$.
  Then the following hold:
  \begin{enumerate}
    \item If $\widetilde{\synthesis} (\analysis f) = f$ for all $f \in \Hpi$,
          then the same holds for all $f \in \Reservoir_w$.
          \vspace*{0.1cm}

    \item If $\widetilde{\analysis} (\synthesis c) = c$ for all finitely supported $c \in \CC^{\Lambda}$,
          then the same holds for all $c \in \ell_{1/w}^\infty (\Lambda)$, and in particular
          for all $c \in Y_d(\Lambda)$.
  \end{enumerate}
\end{lemma}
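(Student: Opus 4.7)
The plan is to use the antidual pairing between $\Reservoir_w$ and $\TestVectors$ and to apply Lemma~\ref{lem:CoefficientReconstructionBounded} twice, once with the admissible choice $Y = L_w^1$ (so that $Y_d(\Lambda) = \ell_w^1(\Lambda)$ and $\Co(L_w^1) = \TestVectors$) and once with $Y = L^\infty_{1/w}$ (so that $Y_d(\Lambda) = \ell^\infty_{1/w}(\Lambda)$ and $\Co(L^\infty_{1/w}) = \Reservoir_w$), thereby reducing both extension statements to an interchange of summation with the antidual pairing.

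For Part~(ii), I would first specialize the hypothesis to $c = (\delta_{\lambda, \lambda_0})_\lambda$ to extract the biorthogonality relation $\langle f_{\lambda_0}, h_\mu \rangle = \delta_{\lambda_0,\mu}$ for all $\lambda_0, \mu \in \Lambda$. Then, for any $c \in \ell^\infty_{1/w}(\Lambda)$, Lemma~\ref{lem:CoefficientReconstructionBounded} applied with $Y = L^\infty_{1/w}$ shows that $\synthesis c = \sum_\lambda c_\lambda f_\lambda$ converges unconditionally in the weak-$*$-topology of $\Reservoir_w$. Since each $h_\mu$ lies in $\TestVectors$, evaluation at $h_\mu$ is weak-$*$-continuous, so
\[
  (\widetilde{\analysis}(\synthesis c))_\mu
  = \langle \synthesis c, h_\mu \rangle
  = \sum_{\lambda \in \Lambda} c_\lambda \, \langle f_\lambda, h_\mu \rangle
  = c_\mu
\]
by biorthogonality. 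The standing inclusion $Y_d(\Lambda) \hookrightarrow \ell^\infty_{1/w}(\Lambda)$ in Lemma~\ref{lem:CoefficientReconstructionBounded} then covers the $Y_d$-case as a special case.

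For Part~(i), the key construction is, for each $\varphi \in \TestVectors$, the auxiliary test vector
\[
  \psi_\varphi := \sum_{\lambda \in \Lambda} \langle \varphi, h_\lambda \rangle \, f_\lambda .
\]
The sequence $(\langle \varphi, h_\lambda \rangle)_\lambda$ lies in $\ell_w^1(\Lambda)$ by Lemma~\ref{lem:CoefficientReconstructionBounded} applied with $Y = L_w^1$ to the molecule system $(h_\lambda)_\lambda$, and a quick revisit of Step~2 of the proof of that lemma for $Y = L_w^1$ shows that the partial sums of $V_g \psi_\varphi$ are dominated by the $L_w^1$-summable majorant $\sum_\lambda |\langle \varphi, h_\lambda \rangle| \, |V_g f_\lambda|$, so dominated convergence upgrades the weak-$*$ convergence of $\psi_\varphi^{(N)}$ to convergence in the $\TestVectors$-norm and places $\psi_\varphi$ in $\TestVectors$. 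Continuity of any antidual functional $f \in \Reservoir_w$ on $\TestVectors$ then permits the interchange
\[
  \langle \widetilde{\synthesis}(\analysis f), \varphi \rangle
  = \sum_\lambda \langle f, f_\lambda \rangle \, \langle h_\lambda, \varphi \rangle
  = \langle f, \psi_\varphi \rangle,
  \qquad f \in \Reservoir_w, \; \varphi \in \TestVectors .
\]
Applying the hypothesis with $f \in \Hpi$ now gives $\langle f, \psi_\varphi - \varphi \rangle = 0$ for every $f \in \Hpi$; since $\psi_\varphi - \varphi$ itself lies in $\TestVectors \subset \Hpi$, testing with $f = \psi_\varphi - \varphi$ forces $\psi_\varphi = \varphi$ in $\Hpi$. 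Substituting back into the displayed identity yields $\widetilde{\synthesis}(\analysis f) = f$ in $\Reservoir_w$ for arbitrary $f \in \Reservoir_w$.

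The main obstacle is precisely the interchange in Part~(i): because $f \in \Reservoir_w$ is continuous only with respect to the $\TestVectors$-norm on $\TestVectors$, the bare weak-$*$ convergence statement supplied by Lemma~\ref{lem:CoefficientReconstructionBounded} is too weak, and one must upgrade to $\TestVectors$-norm convergence of the partial sums defining $\psi_\varphi$. Choosing $Y = L_w^1$ and exploiting the explicit $L_w^1$-summable envelope from the proof of that lemma is exactly what unlocks this stronger mode of convergence, making the rest of the argument routine.
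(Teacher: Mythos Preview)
Your proof is correct and takes a genuinely different route from the paper's.

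The paper proceeds uniformly for both parts by first establishing the adjoint identities $\langle \synthesis c, \varphi \rangle = \langle c, \analysis \varphi \rangle$ (for $c \in \ell_{1/w}^\infty$, $\varphi \in \TestVectors$) and $\langle f, \synthesis c \rangle = \langle \analysis f, c \rangle$ (for $f \in \Reservoir_w$, $c \in \ell_w^1$), which show that $\analysis : \Reservoir_w \to \ell_{1/w}^\infty(\Lambda)$ and $\synthesis : \ell_{1/w}^\infty(\Lambda) \to \Reservoir_w$ are weak-$*$ to weak-$*$ continuous. Part~(i) then follows from the weak-$*$ density of $\Hpi$ in $\Reservoir_w$, and Part~(ii) from the weak-$*$ density of finitely supported sequences in $\ell_{1/w}^\infty(\Lambda)$.

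Your argument is dual in spirit but avoids the density step. For Part~(ii) you extract the biorthogonality $\langle f_{\lambda_0}, h_\mu \rangle = \delta_{\lambda_0,\mu}$ from the hypothesis and compute $\langle \synthesis c, h_\mu \rangle$ directly; this is shorter and more explicit than the paper's density argument. For Part~(i) you build the predual operator $\varphi \mapsto \psi_\varphi$ on $\TestVectors$, verify (via the $\ell_w^1$-boundedness of $\widetilde{\analysis}$ and the $\TestVectors$-norm boundedness of $\synthesis$ on $\ell_w^1$) that $\psi_\varphi \in \TestVectors$ with norm-convergent partial sums, and then use the hypothesis on $\Hpi$ together with the elementary test $f = \psi_\varphi - \varphi \in \Hpi$ to conclude $\psi_\varphi = \varphi$; this replaces the paper's appeal to a weak-$*$ separation/density theorem. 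Both approaches rely on the same boundedness inputs from Lemma~\ref{lem:CoefficientReconstructionBounded}; the paper's is more systematic (one continuity-plus-density template for both parts), while yours is slightly more self-contained. One minor remark: the inclusion $Y_d(\Lambda) \hookrightarrow \ell_{1/w}^\infty(\Lambda)$ you invoke is an \emph{assumption} in Lemma~\ref{lem:CoefficientReconstructionBounded}, verified in the paper's proof of the present lemma from the control-weight condition~\eqref{eq:CoorbitWeightCondition}; you should cite the latter rather than call it ``standing''.
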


\begin{proof}
  Below, we will use that $\Co (L_w^1) = \TestVectors$ and $\Co (L_{1/w}^\infty) = \Reservoir_w$;
  see \cite[Corollary~4.4]{feichtinger1989banach1}, and that
  $\ell_w^1(\Lambda) = (L_w^1)_d (\Lambda)$ and $\ell_{1/w}^\infty (\Lambda) = (L_{1/w}^\infty)_d (\Lambda)$;
  see \cite[Lemma~3.5]{feichtinger1989banach1}.
  Furthermore, we will use the convention that the dual pairing
  $\langle \cdot, \cdot \rangle$ between $\ell_{1/w}^\infty$ and $ \ell_w^1$ is antilinear in the second component.

  \medskip{}

  \textbf{Step 1:}
  Applying Lemma~\ref{lem:CoefficientReconstructionBounded} to $Y = L_{1/w}^\infty$,
  we see that ${\synthesis : \ell_{1/w}^\infty (\Lambda) \to \Co(L_{1/w}^\infty) = \Reservoir_w}$
  is continuous, with the defining series converging unconditionally in the weak-$\ast$-topology.
  Therefore, given $f \in \TestVectors$ and $c \in \ell_{1/w}^\infty(\Lambda)$, we see that
  \begin{equation}
    \langle \synthesis c, f \rangle
    = \sum_{\lambda \in \Lambda}
        c_\lambda \, \langle f_\lambda, f \rangle
    = \sum_{\lambda \in \Lambda}
        c_\lambda \, \overline{\langle f, f_\lambda \rangle}
    = \langle c, \analysis f \rangle .
    \label{eq:SynthesisAsAdjoint}
  \end{equation}
  Here, we used that $\analysis : \TestVectors = \Co(L_w^1) \to \ell_w^1 (\Lambda) = (L_w^1)_d (\Lambda)$
  is bounded, as shown by Lemma~\ref{sub:LocalizedIntegralKernelsAppendix} with $Y = L_w^1$.
  Equation~\eqref{eq:SynthesisAsAdjoint} shows that
  ${\synthesis : \ell_{1/w}^\infty (\Lambda) \to \Reservoir_w}$ is continuous
  if we equip the domain and co-domain with the weak-$\ast$-topology.
  Clearly, the same holds for $\widetilde{\synthesis}$.

  \medskip{}

  \textbf{Step 2:}
  For $c = (c_\lambda)_{\lambda \in \Lambda} \in \ell_w^1(\Lambda)$,
  we have $c = \sum_{\lambda \in \Lambda} c_\lambda \, \delta_\lambda$, with unconditional
  convergence.
  Since Lemma~\ref{lem:CoefficientReconstructionBounded} shows that
  $\synthesis : \ell_w^1(\Lambda) \to \TestVectors$ is bounded,
  we thus have $\synthesis c = \sum_{\lambda \in \Lambda} c_\lambda \, f_\lambda$
  with unconditional convergence in $\TestVectors$.
  Thus, we see for $c \in \ell_w^1(\Lambda)$ and $f \in \Reservoir_w$ that
  \[
    \langle f, \synthesis c \rangle
    = \sum_{\lambda \in \Lambda}
        \langle f, f_\lambda \rangle \, \overline{c_\lambda}
    = \langle \analysis f, c \rangle .
  \]
  As in the preceding step, this implies that $\analysis : \Reservoir_w \to \smash{\ell_{1/w}^\infty}$
  is continuous if we equip both domain and co-domain with the weak-$\ast$-topology.
  The same holds for $\widetilde{\analysis}$.

  \medskip{}

  \textbf{Step 3:} Recall that $\TestVectors \subset \Hpi$.
  Therefore, $\Hpi \subset (\TestVectors)^{\neg} = \Reservoir_w$ separates the points.
  Hence
  $\Hpi$ is weak-$\ast$-dense in $\Reservoir_w$, e.g., see \cite[Corollary~5.108]{AliprantisBorderHitchhiker}.
  Thus, if $\widetilde{\synthesis} (\analysis f) = f$ for all $f \in \Hpi$,
  then by Steps~1 and 2 this extends to the weak-$\ast$-closure of $\Hpi$ in $\Reservoir_w$
  and hence to all of $\Reservoir_w$.

  Similarly, given $c \in \ell_{1/w}^\infty (\Lambda)$,
  if we choose finite subsets $\Lambda_n \subset \Lambda$ ($n \in \N$)
  with $\Lambda_n \subset \Lambda_{n+1}$ and $\Lambda = \bigcup_n \Lambda_n$, it is easy to see
  that $c \cdot \indicator_{\Lambda_n} \to c$ with respect to the weak-$\ast$-topology
  on $\ell_{1/w}^\infty (\Lambda)$.
  Thus, if $\widetilde{\analysis} (\synthesis c) = c$ holds for all finitely supported sequences,
  we see as before that this extends to all $c \in \ell_{1/w}^\infty (\Lambda)$.
  Finally, 
  since \eqref{eq:CoorbitWeightCondition} implies that $w(x) \gtrsim \| L_{x^{-1}} \|_{Y \to Y}$,
  we have
  \(
    \| \indicator_Q \|_Y
    = \| L_{\lambda^{-1}} \indicator_{\lambda Q} \|_Y
    \lesssim w(\lambda) \cdot \| \indicator_{\lambda Q} \|_Y,
  \)
  and hence
  $\| c \|_{Y_d} \geq |c_\lambda| \cdot \| \indicator_{\lambda Q} \|_Y \gtrsim |c_\lambda| / w(\lambda)$
  for all $\lambda \in \Lambda$.
  This shows $Y_d(\Lambda) \subset \ell_{1/w}^\infty (G)$,
  so that $\widetilde{\analysis} (\synthesis c) = c$ for all $c \in Y_d(\Lambda)$.
\end{proof}

\end{document}